\documentclass[10pt, twoside, leqno]{article}



\usepackage{amsmath,amsthm}
\usepackage{amssymb}
\usepackage{amsmath}
\usepackage{amsfonts}
\usepackage{amssymb}
\usepackage{dsfont,bbm}

\usepackage{enumerate}

\usepackage[numbers]{natbib}



\pagestyle{myheadings}
\markboth{G. Kur}{Approximation of the Euclidean ball by polytopes with a restricted number of facets}



\newtheorem{thm}{Theorem}[section]
\newtheorem{conj}[thm]{Conjecture}
\newtheorem{cor}[thm]{Corollary}
\newtheorem{lemma}[thm]{Lemma}
\newtheorem*{lemmaa}{Lemma}



\theoremstyle{definition}

\newtheorem{remark}[thm]{Remark}



\numberwithin{equation}{section}


\frenchspacing

\textwidth=13.5cm
\textheight=23cm
\parindent=16pt
\oddsidemargin=-0.5cm
\evensidemargin=-0.5cm
\topmargin=-0.5cm



\newcommand{\be}{\begin{equation}}
\newcommand{\ee}{\end{equation}}

\newcommand{\E}{{\mathbb{E}}}

\newcommand{\R}{\mathbb{R}}

\newcommand\inner[2]{|\langle #1, #2 \rangle|}
\newcommand\inneri[2]{\langle #1, #2 \rangle}
\newcommand\norm[1]{\|#1\|_2}

\newcommand{\NN}{N^{-\frac 2{n-1} }}
\newcommand{\NNN}{N^{\frac 2{n-1} }}


\begin{document}


\baselineskip=17pt


\title{Approximation of the Euclidean ball by polytopes with a restricted number of facets}

\author{Gil Kur \\
Weizmann Institute of Science\\ 
 Rehovot, Israel\\
E-mail: gilkur1990@gmail.com}
\date{}

\maketitle


\renewcommand{\thefootnote}{}

\footnote{2010 \emph{Mathematics Subject Classification}: Primary 52A22; Secondary 60D05.}

\footnote{\emph{Keywords and phrases}: Random polytopes, approximation, convex bodies.}

\renewcommand{\thefootnote}{\arabic{footnote}}
\setcounter{footnote}{0}


	\begin{abstract}
	We prove that there is an absolute constant $ C$ such that for every $ n \geq 2 $ and $ N\geq 10^n, $ there exists a polytope $ P_{n,N} $ in $ \R^n $ with at most $ N $ facets that satisfies
		\begin{equation*}
		\Delta_{v}(D_n,P_{n,N}):=\text{vol}_n\left(D_n \Delta P_{n,N}\right)\leq C\NN \text{vol}_n\left(D_n\right)
		\end{equation*}
		and
		\begin{align*}
				 \Delta_{s}(D_n,P_{n,N})&:=\text{vol}_{n-1}\left(\partial\left(D_n\cup P_{n,N}\right)\right) - \text{vol}_{n-1}\left(\partial\left(D_n\cap P_{n,N}\right)\right)\\&\leq 4C\NN \text{vol}_{n-1}\left(\partial D_n\right),
		\end{align*} 
		where $ D_n $ is the $ n$-dimensional Euclidean unit ball.
	This result closes gaps from several papers of Hoehner, Ludwig, Sch\"utt and Werner. The upper bounds are optimal up to absolute constants. This result shows that a polytope with an exponential number of facets can approximate the $ n$-dimensional Euclidean ball with respect to the aforementioned distances.  
\end{abstract}
\section{Introduction}

Let $ K$  be a convex body in $ \R^n $ with $ C^2 $ boundary $ \partial K$ and everywhere positive Gaussian curvature $\kappa$. First, in \cite{gruber1993asymptotic} it was shown that
\begin{align*}
&\lim_{N\to\infty}\frac{\min\{\text{vol}_n\left(P \setminus K\right)\;|\textrm{$ K \subset P  $ and $P$ is a polytope with at most $N$ facets}\}}{\NN} \\&= \frac{1}{2}\text{div}_{n-1}\left(\int_{\partial K} \kappa\left(x\right)^{\frac{1}{n+1}}d\mu_{\partial K}\left(x\right)\right)^{\frac{n+1}{n-1}},
\end{align*}
where  $ \mu_{\partial K} $ denotes the surface measure of $ \partial K  $ and $\text{div}_{n-1}$ is a constant that depends only on the dimension. In \cite{zador1982asymptotic}, Zador proved  that $ \text{div}_{n-1} = (2\pi e)^{-1}n + o(n).$  Later, Ludwig \citep{ludwig1999asymptotic} showed a similar formula for arbitrarily position polytopes, namely  
\begin{align*}
&\lim_{N\to\infty}\frac{\min\{\Delta_{v}(P,D_n)\;|\, P\textrm{ is a polytope with at most }N \textrm{ facets}\}}{\NN} =\\& \frac{1}{2}\textrm{ldiv}_{n-1}\left(\int_{\partial K} \kappa\left(x\right)^{\frac{1}{n+1}}d\mu_{\partial K}\left(x\right)\right)^{\frac{n+1}{n-1}},
\end{align*}
where  $ \textrm{ldiv}_{n-1} $ is a positive constant that depends only on the dimension. In \cite{Lud06}, it was shown that $ \textrm{ldiv}_{n-1} \geq c.$  Specifically, they proved that  for every polytope $ P $ in $ \R^n $ with $ N \geq 10^n $ facets  
\begin{align}
\Delta_v(D_n, P) \geq c\NN\text{vol}_n\left(D_n\right) .
\end{align} 
For more details, please see Theorem 2 in \cite{Lud06}. 

The estimate for $ \textrm{div}_{n-1} $ implies that $\textrm{ldiv}_{n-1} \leq c_2n,$ which until this paper, was the best-known upper bound for $ \textrm{ldiv}_{n-1} $. Clearly, there is a gap of a factor of a dimension between the upper and lower bounds for $\textrm{ldiv}_{n-1}$. In this paper, we prove that removing the circumscribed restriction improves the order of approximation by a factor of dimension. Specifically, we show that for all $ N \geq 10^n$ there is a polytope $ P_{n,N} $ in $ \R^n $ with at most $ N $ facets, which is generated from a random construction, that satisfies
\begin{align}\label{firstineq}
\Delta_v(D_n, P_{n,N}) \leq C_n\NN\text{vol}_n\left(D_n\right), 
\end{align} 
where $ C_n$ is a positive constant that depends only on the dimension and is bounded by an absolute constant. A corollary of this result is that  $ \textrm{ldiv}_{n-1} \leq C,$ which closes the aforementioned gap in the estimates for $\textrm{ldiv}_{n-1}$ from \cite{ludwig1999asymptotic,Lud06}. 
This inequality also shows that one can approximate the $ n$-dimensional Euclidean ball in the symmetric volume difference by an arbitrarily positioned polytope with an exponential number of facets. This phenomena holds for the Hausdorff metric and the Banach-Mazur distance; see \cite{artstein2015asymptotic,aubrun2017alice}. 

When $ N $ is large enough, we improve the bound from Eq. \eqref{firstineq} to
\[
	\Delta_v(D_n, P_{n,N}) \leq \left(\int_{0}^{1}t^{-1}(1-e^{-\ln(2)t})dt + \int_{0}^{\infty}e^{-\ln(2)e^{t}}dt+O(n^{-0.5})\right)\NN\text{vol}_n\left(D_n\right),
\]   
which implies that  
\[
\text{ldiv}_{n-1} \leq (\pi e)^{-1}\left(\int_{0}^{1}t^{-1}(1-e^{-\ln(2)t})dt+\int_{0}^{\infty}e^{-\ln(2)e^{t}}dt\right) + o(1) \sim \frac{0.96}{\pi e}+o(1).
\]  
We also optimize the argument of Theorem 2 in \cite{Lud06} and prove that $ \text{ldiv}_{n-1} \geq (4\pi e)^{-1}+o(1).$ 

Recently, Hoehner, Sch\"utt and Werner \cite{Wer15}  considered a polytopal approximation of the ball with respect to the  surface area deviation, which is defined for any two compact sets $ A,B \subset\R^n$ with measurable boundary as follows: 
\[
	\Delta_s\left(A,\ B\right) := \text{vol}_{n-1}\left(\partial\left(A\cup B\right)\right) - \text{vol}_{n-1}\left(\partial\left(A\cap B\right)\right).
\]
It was also shown that for all polytopes $ Q $ in $ \R^n $ with $ N \geq {M}_n$ facets,
\[
\Delta_s\left(Q,D_n\right) \geq c_1\NN\text{vol}_{n-1}\left(\partial D_n\right)
\]
where ${M}_n$ is a natural number that depends only on the dimension $ n $ and  $ c_1$ is a positive absolute constant.  
We show that this bound is optimal up to an absolute constant, by using the aforementioned random construction to find a polytope  $ Q_{n,N}$ in $ \R^n $ with at most $ N \geq 10^n $ facets that satisfies \[ \Delta_s\left(Q_{n,N},D_n\right) \leq 4C_n\NN\text{vol}_{n-1}\left(\partial D_n\right),\]
where $ C_n \leq C$ are the constants that were defined in Eq. \eqref{firstineq}.

		\paragraph{Notations and Preliminary Results}\ \\
		$ D_n$ is the $n\textrm{-dimensional}$ centered  Euclidean unit ball.$ |A| $ is the Lebesgue measure,i.e. volume, of a set $A.$ Similarly $ |\partial A| $ is the surface area of the set $ A $. $ \text{conv}(A) $ denotes the the convex hull of the set $ A.$ $ A^c $ denotes the complementary set of $ A.$  \\
		The symmetric volume difference between two sets $ |A \Delta B|$ is denoted by $ \Delta_v(A,B) $.\\
		The surface area deviation $  \Delta_s \left(A,B\right)$ := $ |\partial\left(A\cup B\right)| - |\partial\left(A\cap B\right)| .$\\
		We denote by $ \text{as}(K) := \int_{\partial K} \kappa\left(x\right)^{\frac{1}{n+1}}d\mu_{\partial K}\left(x\right) $ the affine surface area of $ C^2 $ convex body $ K,$
		and by $\sigma$ the uniform measure on the $ \mathbb{S}^{n-1}.$
Throughout the paper $ c,c',C,C',c_1,c_2,C_1,C_2$  denote positive absolute constants that may change from
line to line.
We shall use the following auxiliary results.
\begin{lemma}
	\[
	\frac{c}{\sqrt{n}} \leq \frac{|D_n|}{|D_{n-1}|} \leq  \frac{C}{\sqrt{n}}
	\]
\end{lemma}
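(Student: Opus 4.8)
The plan is to reduce the ratio to a one-dimensional integral by slicing and then estimate that integral with elementary inequalities. Writing a point of $D_n$ as $(t,y)$ with $t\in[-1,1]$ and $y\in\R^{n-1}$, the slice at height $t$ is a dilate of $D_{n-1}$ of radius $\sqrt{1-t^2}$, so Fubini gives
\[
|D_n| = |D_{n-1}|\int_{-1}^{1}(1-t^2)^{\frac{n-1}{2}}\,dt,
\]
and it suffices to prove that $I_n:=\int_{-1}^{1}(1-t^2)^{\frac{n-1}{2}}\,dt$ satisfies $c/\sqrt n\le I_n\le C/\sqrt n$ for every $n\ge2$.

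For the upper bound I would use $1-s\le e^{-s}$ to get $(1-t^2)^{\frac{n-1}{2}}\le e^{-(n-1)t^2/2}$, so that $I_n\le\int_{\R}e^{-(n-1)t^2/2}\,dt=\sqrt{2\pi/(n-1)}$; since $n-1\ge n/2$ for $n\ge2$, this is at most $2\sqrt{\pi}/\sqrt n$, giving $C=2\sqrt\pi$.

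For the lower bound I would discard everything outside the range $|t|\le 1/\sqrt n$. On that range $(1-t^2)^{\frac{n-1}{2}}\ge(1-1/n)^{\frac{n-1}{2}}$, and the elementary inequality $\ln(1-1/n)\ge-1/(n-1)$ (valid for $n\ge2$) yields $(1-1/n)^{\frac{n-1}{2}}=\exp\!\big(\tfrac{n-1}{2}\ln(1-1/n)\big)\ge e^{-1/2}$. Hence $I_n\ge 2e^{-1/2}/\sqrt n$, giving $c=2e^{-1/2}$.

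There is no real obstacle here; the only point to watch is that the constants stay uniform down to $n=2$, which is why I truncate at $1/\sqrt n$ rather than the more natural $1/\sqrt{n-1}$, the latter degenerating at $n=2$. An alternative is to write $I_n=\sqrt\pi\,\Gamma(\tfrac{n+1}{2})/\Gamma(\tfrac n2+1)$ as a Beta integral and invoke Stirling's formula with explicit error bounds, but the direct estimate above is self-contained and avoids tracking Gamma-function constants.
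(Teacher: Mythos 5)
Your argument is correct. Note that the paper itself states this as a standard preliminary fact and gives no proof; the implicit "standard" route is the one you mention as an alternative, namely $|D_n|=\pi^{n/2}/\Gamma(\tfrac n2+1)$, so that $|D_n|/|D_{n-1}|=\sqrt\pi\,\Gamma(\tfrac{n+1}{2})/\Gamma(\tfrac n2+1)\asymp n^{-1/2}$ by Stirling (or by log-convexity of $\Gamma$). Your slicing identity $|D_n|=|D_{n-1}|\int_{-1}^1(1-t^2)^{\frac{n-1}{2}}dt$ is exactly right, the Gaussian domination $(1-t^2)^{\frac{n-1}{2}}\le e^{-(n-1)t^2/2}$ gives the upper bound with an explicit constant valid for all $n\ge2$ (the step $n-1\ge n/2$ is what keeps it uniform at $n=2$), and the truncation to $|t|\le n^{-1/2}$ together with $\ln(1-1/n)\ge-1/(n-1)$ gives the lower bound $2e^{-1/2}/\sqrt n$ with equal rigor. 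What your route buys is a completely self-contained elementary proof with explicit constants and no asymptotic error terms to track; what the Gamma-function route buys is the exact value of the ratio and sharper asymptotics (e.g. $|D_n|/|D_{n-1}|\sim\sqrt{2\pi/n}$), which are not needed for the way the lemma is used in the paper. Either way the statement is established, so there is no gap.
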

\begin{thm}[Isoperimetric inequality] {\label{isoperemetric}}
	If $ K \subset \R^n $ be a convex body, then 
	\[
		|\partial K| \geq n|K|^{\frac{n-1}{n}}|D_n|^{\frac{1}{n}}.
	\]   
\end{thm}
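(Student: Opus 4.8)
The plan is to deduce this classical isoperimetric inequality from the Brunn--Minkowski inequality applied to $K$ together with a small Euclidean ball, combined with the first-order behaviour of the outer parallel volume. For $\varepsilon>0$ write $K+\varepsilon D_n=\{x+\varepsilon y:\ x\in K,\ y\in D_n\}$ for the outer $\varepsilon$-neighbourhood of $K$. The first ingredient is the standard fact that for a convex body the surface area equals the Minkowski content, i.e.
\[
|\partial K|=\lim_{\varepsilon\to 0^+}\frac{|K+\varepsilon D_n|-|K|}{\varepsilon};
\]
for convex bodies this is the linear term of the Steiner formula $|K+\varepsilon D_n|=\sum_{j=0}^{n}\binom{n}{j}W_j(K)\,\varepsilon^{j}$ with $n\,W_1(K)=|\partial K|$, and I would simply quote it (e.g.\ from Schneider's monograph on convex bodies). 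If one wants a self-contained argument, it suffices to verify the one-sided estimate $|K+\varepsilon D_n|-|K|\le \varepsilon\,|\partial K|+o(\varepsilon)$, which is the only direction used below; this follows by exhibiting the outer shell $(K+\varepsilon D_n)\setminus K$ as the image of $\partial K\times[0,\varepsilon]$ under the normal map $(y,t)\mapsto y+t\,\nu_K(y)$ and bounding its Jacobian, using convexity (every exterior point near $\partial K$ has a unique nearest boundary point).

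The second and main ingredient is the Brunn--Minkowski inequality, which yields
\[
\bigl|K+\varepsilon D_n\bigr|^{1/n}\ \ge\ |K|^{1/n}+\bigl|\varepsilon D_n\bigr|^{1/n}\ =\ |K|^{1/n}+\varepsilon\,|D_n|^{1/n}.
\]
Raising both sides to the $n$-th power and using the elementary bound $(a+b)^n\ge a^n+n\,a^{n-1}b$ for $a,b\ge 0$ with $a=|K|^{1/n}$ and $b=\varepsilon\,|D_n|^{1/n}$, we get
\[
\bigl|K+\varepsilon D_n\bigr|\ \ge\ |K|+n\,\varepsilon\,|K|^{\frac{n-1}{n}}\,|D_n|^{\frac{1}{n}}.
\]

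Combining the two ingredients, I would subtract $|K|$ from both sides, divide by $\varepsilon>0$, and let $\varepsilon\to 0^+$, obtaining
\[
|\partial K|\ \ge\ n\,|K|^{\frac{n-1}{n}}\,|D_n|^{\frac{1}{n}},
\]
which is the assertion; equality holds precisely when $K$ is a Euclidean ball, matching the equality case of Brunn--Minkowski and consistent with the identity $|\partial D_n|=n\,|D_n|$. The only genuine obstacle is the surface-area/Minkowski-content identification in the first step; since the statement is entirely classical, the cleanest route in a paper of this kind is to cite it, and an alternative derivation through the Knothe or Brenier transport map (Gromov's proof of the isoperimetric inequality) is available if a fully self-contained argument is desired.
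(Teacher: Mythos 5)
Your argument is correct, and it is the standard proof of the isoperimetric inequality for convex bodies: Brunn--Minkowski gives $|K+\varepsilon D_n|^{1/n}\ge |K|^{1/n}+\varepsilon|D_n|^{1/n}$, the elementary bound $(a+b)^n\ge a^n+na^{n-1}b$ linearizes it, and the identification of $|\partial K|$ with the Minkowski content (the linear coefficient in the Steiner formula) lets you pass to the limit $\varepsilon\to 0^+$. You are also right that only the one-sided estimate $|K+\varepsilon D_n|-|K|\le \varepsilon|\partial K|+o(\varepsilon)$ is needed, and your normal-map justification of it is sound for convex bodies. The only point of comparison with the paper is that the paper does not prove this statement at all: it is listed among the preliminary results as a classical fact (Theorem \ref{isoperemetric}) and simply invoked later, e.g.\ in the proof of Lemma \ref{lem1} and of Theorem \ref{suprisig}. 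So your write-up supplies a proof where the paper is content to cite one; in a paper of this kind citing Schneider (or Gromov's transport argument, as you mention) is indeed the expected treatment, and your sketch would serve as a self-contained substitute with no gaps beyond the standard Steiner-formula fact you already flag as quotable.
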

\begin{thm}[Affine isoperimetric inequality \cite{lutwak1996brunn}]{\label{affine}}
	Let $ K \subset \R^n $ be a convex body with  $ |K|=|D_n| $ and let $ \text{as}(K) := \int_{\partial K} \kappa\left(x\right)^{\frac{1}{n+1}}d\mu_{\partial K}\left(x\right) $ denote the affine surface area of $K$. Then
	\[
		\textrm{as}(K)\leq \textrm{as}(D_n). 	
	\]
\end{thm}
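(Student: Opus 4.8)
The statement is the classical affine isoperimetric inequality, and the plan is to derive it from the Blaschke--Santal\'o inequality by a single application of H\"older's inequality. I present the argument for $C^2$ bodies with strictly positive Gaussian curvature, which is all that is needed here, and refer to \cite{lutwak1996brunn} for general convex bodies. Since $\text{as}$ and $|\cdot|$ are both translation invariant, we are free to translate $K$ during the argument.

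First I would rewrite $\text{as}(K)$ as an integral over the sphere. Parametrizing $\partial K$ by the inverse Gauss map $u\mapsto x(u)$, $u\in\mathbb{S}^{n-1}$, and writing $f_K(u)$ for the curvature function, i.e.\ the reciprocal of the Gaussian curvature at the boundary point with outer unit normal $u$, one has $d\mu_{\partial K}=f_K(u)\,du$ and $\kappa(x(u))=f_K(u)^{-1}$, where $du$ is spherical Lebesgue measure, so that
\[
\text{as}(K)=\int_{\mathbb{S}^{n-1}} f_K(u)^{\frac{n}{n+1}}\,du .
\]
Letting $h_K$ be the support function of $K$, I would split $f_K^{n/(n+1)}=(h_K f_K)^{n/(n+1)}\,h_K^{-n/(n+1)}$ and apply H\"older's inequality with exponents $\tfrac{n+1}{n}$ and $n+1$:
\[
\text{as}(K)\le\Bigl(\int_{\mathbb{S}^{n-1}} h_K f_K\,du\Bigr)^{\frac{n}{n+1}}\Bigl(\int_{\mathbb{S}^{n-1}} h_K^{-n}\,du\Bigr)^{\frac{1}{n+1}} .
\]
The first integral equals $n|K|$ via the identity $|K|=\tfrac1n\int h_K\,dS_K$ for the surface area measure $dS_K=f_K\,du$; the second equals $n|K^{\circ}|$, since (after translating so that $0$ is interior to $K$) the polar body $K^{\circ}$ has radial function $1/h_K$ and $|K^{\circ}|=\tfrac1n\int_{\mathbb{S}^{n-1}}(1/h_K)^{n}\,du$. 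Hence $\text{as}(K)^{n+1}\le n^{n+1}|K|^{n}|K^{\circ}|$.

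To finish, I would translate $K$ so that its Santal\'o point sits at the origin; the Blaschke--Santal\'o inequality then gives $|K|\,|K^{\circ}|\le |D_n|^{2}$, so $\text{as}(K)^{n+1}\le n^{n+1}|D_n|^{2}|K|^{n-1}$. Substituting the hypothesis $|K|=|D_n|$ and using $\text{as}(D_n)=n|D_n|$ (the unit sphere has area $n|D_n|$ and constant Gaussian curvature $1$) gives $\text{as}(K)^{n+1}\le (n|D_n|)^{n+1}=\text{as}(D_n)^{n+1}$, which is the claim. The only genuinely deep input is the Blaschke--Santal\'o inequality; the rest is bookkeeping, with the mildly delicate points being the passage to general non-smooth convex bodies and the choice of the translate that makes $K^{\circ}$ well behaved. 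An alternative proof avoiding Blaschke--Santal\'o is to show that $\text{as}$ is non-decreasing under Steiner symmetrization while $|\cdot|$ is preserved, and to combine this with the convergence of a suitable sequence of Steiner symmetrizations of $K$ to $D_n$ and the upper semicontinuity of $\text{as}$; proving the symmetrization monotonicity is, however, of comparable difficulty to invoking Blaschke--Santal\'o.
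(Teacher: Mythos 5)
The paper does not prove this statement: it is cited as a classical result from Lutwak's survey \cite{lutwak1996brunn} and used as a black box, so there is no ``paper's own proof'' to compare against. Your proof is nonetheless correct and is the standard derivation: rewrite $\text{as}(K)=\int_{\mathbb{S}^{n-1}} f_K^{n/(n+1)}\,du$ via the inverse Gauss map, split off a power of the support function, apply H\"older with exponents $\tfrac{n+1}{n}$ and $n+1$ to get $\text{as}(K)^{n+1}\le n^{n+1}|K|^n|K^\circ|$, and finish with Blaschke--Santal\'o after translating the Santal\'o point to the origin. Two small things worth flagging. First, the H\"older step silently requires $h_K>0$ on the sphere, i.e.\ the origin in the interior of $K$, which is not automatic for the Santal\'o-point translate until you make it so; since you perform a single translation for both steps, it is cleanest to say up front ``translate $K$ so that its Santal\'o point is at the origin (which is interior),'' so the splitting $f_K^{n/(n+1)}=(h_Kf_K)^{n/(n+1)}h_K^{-n/(n+1)}$ and the identity $|K^\circ|=\tfrac1n\int h_K^{-n}\,du$ are both licensed by the same normalization. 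Second, the restriction to $C^2_+$ bodies is exactly the setting the paper needs (the integrand $\kappa^{1/(n+1)}$ is otherwise not defined pointwise), so the caveat about extending to general convex bodies is not needed here; if one did want the general statement, the argument still works with $f_K\,du$ replaced by the surface area measure $S_K$ and the curvature function by its Radon--Nikodym density, as in \cite{lutwak1996brunn}. The Steiner-symmetrization route you mention at the end is indeed an alternative, but as you note it is not lighter, and it would require establishing upper semicontinuity of $\text{as}$, which is its own project.
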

\begin{thm}[Theorem 1 in \cite{ludwig1999asymptotic}]\label{ludlemma}
	\begin{align*}
		&\lim_{N\to\infty}\frac{\min\{\Delta_v(D_n,P_{n,N})\;|\textrm{P is a polytope with at most N facets}\}}{\NN} =\\& \frac{1}{2}\textrm{ldiv}_{n-1}\left(\int_{\partial K} \kappa\left(x\right)^{\frac{1}{n+1}}d\mu_{\partial K}\left(x\right)\right)^{\frac{n+1}{n-1}}
	\end{align*}
\end{thm}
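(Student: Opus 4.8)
This is Ludwig's asymptotic formula, so I would follow the localization strategy pioneered by Gruber in the circumscribed case and adapted by Ludwig to freely positioned polytopes: decouple the global extremal problem into independent local problems on small boundary patches, solve the local problem up to a single dimensional constant, and recombine via Hölder's inequality. \emph{Setting up the local model.} Since $K$ is $C^2$ with $\kappa>0$, near any $x\in\partial K$ the boundary is, after the affine map that normalizes the second fundamental form at $x$ to the identity, a $C^0$-small perturbation of the standard paraboloid $\{(y',y_n):y_n=\tfrac12|y'|^2\}$; tracking the Jacobian of this normalization shows that the natural ``weight'' of a patch $C\subset\partial K$ is its affine-surface-area content $a(C):=\int_{C}\kappa(x)^{1/(n+1)}\,d\mu_{\partial K}(x)$. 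One covers $\partial K$ by finitely many such patches $C_1,\dots,C_m$, sets $a_i:=a(C_i)$, and argues that, to leading order as $N\to\infty$, a near-optimal polytope approximates each $C_i$ essentially independently.

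\emph{Solving the local problem.} For a paraboloid cap one defines $\gamma_{n-1}$ as the limit, as $k\to\infty$, of $k^{2/(n-1)}a^{-(n+1)/(n-1)}$ times the least symmetric volume difference achievable by a polytope with $k$ facets over a cap of base-content $a$; existence of this limit follows from a Fekete-type subadditivity argument (gluing approximations across subdivided caps). The scaling is forced by the fact that one secant hyperplane over a paraboloid piece of linear size $\ell$ incurs error $\asymp\ell^{n+1}$, so tiling a cap of base-content $a$ into $k$ pieces with $\ell^{n-1}\asymp a/k$ gives total error $\asymp k(a/k)^{(n+1)/(n-1)}$; optimizing over the tiling shapes identifies $\gamma_{n-1}$ with $\tfrac12\,\mathrm{ldiv}_{n-1}$, the factor $\tfrac12$ reflecting that a freely placed polytope may use secant rather than supporting hyperplanes — precisely where the gain over the circumscribed constant $\mathrm{div}_{n-1}$ comes from.

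\emph{Recombining.} Distributing $k_i$ facets to $C_i$ with $\sum_i k_i=N$, the total error is $\approx\gamma_{n-1}\sum_i a_i^{(n+1)/(n-1)}k_i^{-2/(n-1)}$; minimizing subject to $\sum_i k_i=N$ (Lagrange multipliers, or Hölder using the exponent identity $\tfrac{n+1}{n-1}-\tfrac{2}{n-1}=1$) gives $k_i\propto a_i$ and optimal value $\gamma_{n-1}\,\text{as}(K)^{(n+1)/(n-1)}N^{-2/(n-1)}$. Letting the patches shrink ($m\to\infty$) turns the Riemann sums into $\text{as}(K)=\int_{\partial K}\kappa^{1/(n+1)}\,d\mu_{\partial K}$, which is the asserted limit with constant $\tfrac12\,\mathrm{ldiv}_{n-1}$.

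\emph{Main obstacle.} The upper bound is the comparatively routine direction once the local model and the constant $\gamma_{n-1}$ are in place: build the patchwise-optimal polytope, count facets, sum the errors, pass to the limit, and control the $C^2$ remainder terms uniformly. The genuinely hard part is the matching lower bound, where for an arbitrary near-optimal $P$ one has no control over its facet structure: one must show the symmetric volume difference is at least $\sum_{F}$ (local extremal lower bound over the portion of $\partial K$ ``served'' by facet $F$), then use a convexity/Jensen-type argument on the local extremal function to collapse that sum to the single Hölder bound above, while separately disposing of pathological configurations (facets that are too large, or polytopes that are not Hausdorff-close to $K$). Establishing that $\gamma_{n-1}$ is a true limit, and pinning down its value relative to $\mathrm{div}_{n-1}$, is the other delicate ingredient.
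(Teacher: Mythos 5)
This statement is not proved in the paper at all: it is quoted verbatim as Theorem~1 of Ludwig's 1999 paper and used as a black box (to convert the volume bound of Theorem~\ref{main_thm} into a bound on $\mathrm{ldiv}_{n-1}$, and in Eq.~\eqref{conjconj}). So there is no in-paper argument to compare yours against; the only fair comparison is with Ludwig's original proof, and your outline is indeed a faithful high-level description of that strategy (Gruber-style localization to paraboloid patches after normalizing the second fundamental form, a local extremal constant whose existence comes from a subadditivity argument, and recombination of the patchwise errors by Lagrange/H\"older with the exponent identity $\tfrac{n+1}{n-1}-\tfrac{2}{n-1}=1$, which produces the weight $\kappa^{1/(n+1)}$ and hence $\mathrm{as}(K)$).

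As a proof, however, what you wrote is a road map rather than an argument: the two ingredients you yourself flag as hard --- existence of the limit defining the local constant, and the matching lower bound for an arbitrary near-optimal polytope with no control on its facet structure --- are exactly where all the work in Ludwig's paper lies, and they are not carried out here. One substantive inaccuracy worth correcting: the factor $\tfrac12$ in front of $\mathrm{ldiv}_{n-1}$ does \emph{not} encode the gain from allowing secant rather than supporting hyperplanes; the circumscribed asymptotics quoted in the introduction carry the same $\tfrac12$ in front of $\mathrm{div}_{n-1}$, so the $\tfrac12$ is pure normalization and the entire benefit of free placement is contained in the comparison $\mathrm{ldiv}_{n-1}$ versus $\mathrm{div}_{n-1}$ (which is precisely the gap this paper addresses by showing $\mathrm{ldiv}_{n-1}\leq C$ while $\mathrm{div}_{n-1}\asymp n$). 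Also note that the statement as printed in the paper is slightly garbled (the minimum is written for $D_n$ while the right-hand side involves a general body $K$); your reconstruction correctly treats the general-$K$ version, which is what Eq.~\eqref{conjconj} actually needs.
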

\begin{thm}[Theorem 2 in \cite{Lud06}]{\label{lowebound}}
	Assume that $ N>10^{n} $, and let $ P $ be a polytope in $\R^n$ with at most $ N $ facets. Then there exists $ c>0 $ such that
	\[
	 \Delta_v(D_n,P_{n,N}) \geq c\NN|D_n|.
	\]
\end{thm}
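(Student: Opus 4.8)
The plan is to decompose the symmetric difference over the facets of $P$ by means of radial shadows, to bound the contribution of each facet from below by a quantity that is \emph{super-additive} in the solid angle the facet occupies, and then to sum using convexity.

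\emph{Reduction and set-up.} If $\Delta_v(D_n,P)\ge\tfrac1{10}|D_n|$ we are done, since $\NN<1$; so assume $\Delta_v(D_n,P)<\tfrac1{10}|D_n|$. Then $|D_n\setminus P|<\tfrac1{10}|D_n|$, so no defining halfspace of $P$ deletes more than one tenth of $D_n$; writing the facet hyperplanes as $H_i=\{x:\langle x,u_i\rangle=h_i\}$ with $u_i\in\mathbb{S}^{n-1}$, a cap of $D_n$ whose angular radius exceeds $\tfrac\pi2-cn^{-1/2}$ has volume $>\tfrac1{10}|D_n|$, so every $h_i\gtrsim n^{-1/2}$, and in particular $0\in\operatorname{int}(P)$. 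Let $\rho_P$ be the radial function of $P$ and let $\nu$ be the surface measure on $\mathbb{S}^{n-1}$, so that $\nu(\mathbb{S}^{n-1})=|\partial D_n|=n|D_n|$. From the polar formula $|A|=\tfrac1n\int_{\mathbb{S}^{n-1}}\rho_A^{\,n}\,d\nu$ we obtain
\[
\Delta_v(D_n,P)=\int_{\mathbb{S}^{n-1}}\frac{|\rho_P(v)^n-1|}{n}\,d\nu(v)=\sum_i\int_{\omega_i}\frac{|\rho_P(v)^n-1|}{n}\,d\nu(v),
\]
where $\omega_i$ is the radial shadow of the facet $F_i$; the $\omega_i$ partition $\mathbb{S}^{n-1}$ up to a $\nu$-null set, each $\omega_i$ lies in the open hemisphere about $u_i$, and on $\omega_i$ one has $\rho_P(v)=h_i/\langle v,u_i\rangle$, a function of $\phi:=\angle(v,u_i)$ alone.

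\emph{Local estimate and summation.} The goal is an absolute constant $c_0>0$ such that for every $i$,
\[
\int_{\omega_i}\frac{|\rho_P(v)^n-1|}{n}\,d\nu(v)\ \ge\ \frac{c_0}{n\,|D_{n-1}|^{2/(n-1)}}\,\nu(\omega_i)^{\frac{n+1}{n-1}}.
\]
If $\Phi_i$ denotes the angular radius of the smallest cap about $u_i$ containing $\omega_i$, then $\nu(\omega_i)\le|D_{n-1}|\Phi_i^{\,n-1}$ (because $\sin\phi\le\phi$), so the right-hand side is $\lesssim n^{-1}\nu(\omega_i)\Phi_i^2$, and for a cap-shaped $\omega_i$ this is sharp: substituting $\psi=\sqrt n\,\phi$ and using $(h_i/\cos\phi)^n\approx h_i^{\,n}e^{\psi^2/2}$ reduces the contribution to a one–variable integral against the weight $\psi^{\,n-2}\,d\psi$, and whichever position $h_i=1-t_i$ the facet takes, $|\rho_P^n-1|$ cannot stay small on the bulk of $\omega_i$ — the cheapest case (position $h_i$ placed roughly ``halfway'', cost $\asymp n^{-1/2}|D_n|\Phi_i^{\,n+1}\asymp n^{-1}\nu(\omega_i)\Phi_i^2$) is precisely where dropping the circumscribed restriction buys the factor $n$ over the circumscribed bound. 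Granting the local estimate, sum over the at most $N$ facets and use the convexity of $t\mapsto t^{(n+1)/(n-1)}$ together with $\sum_i\nu(\omega_i)=n|D_n|$:
\[
\Delta_v(D_n,P)\ \ge\ \frac{c_0}{n\,|D_{n-1}|^{2/(n-1)}}\sum_i\nu(\omega_i)^{\frac{n+1}{n-1}}\ \ge\ \frac{c_0}{n\,|D_{n-1}|^{2/(n-1)}}\,\NN\,(n|D_n|)^{\frac{n+1}{n-1}}.
\]
Since $n^{\frac{n+1}{n-1}-1}=n^{2/(n-1)}\ge1$ and $|D_n|/|D_{n-1}|\ge c/\sqrt n$ (the first auxiliary lemma), the right-hand side is $\ge c'\NN|D_n|$ for an absolute constant $c'$, which is the theorem.

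\emph{Main obstacle.} The delicate point is the local estimate for \emph{degenerate} shadows. A long, thin, nearly tangent facet has a sliver-shaped $\omega_i$ on which $\rho_P$ is almost exactly $1$, so that facet's own contribution falls below the target $n^{-1}\nu(\omega_i)\Phi_i^{2}$. I would handle this by charging such a facet instead through the cap $C_i=D_n\setminus H_i^{-}$ that it removes from $D_n$: the radial shadow of $C_i$ is a genuine cap of angular radius $\arccos h_i$, \emph{every} direction inside it has $\rho_P<1$, and the cost accumulated over that whole cap is $\gtrsim n^{-1}(\arccos h_i)^{2}$ times its $\nu$-measure, no matter which facets' shadows actually meet it; one then takes, facet by facet, the larger of the ``shadow'' and the ``removed-cap'' lower bounds. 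The caps $C_i$ all have volume $\le\Delta_v(D_n,P)$, so the overlaps among them are cheap to account for; the complementary degenerate case (a facet with very small $h_i$ whose shadow is nearly a whole hemisphere) is excluded by the set-up together with the observation that such a facet already contributes on the order of $|D_n|\gg\NN|D_n|$ to $\Delta_v$ (either the removed cap is large, or the facet protrudes far outside $D_n$). Assembling these two bounds into a single clean per-facet inequality, and checking that every intermediate constant remains absolute via the first auxiliary lemma, is the bulk of the work.
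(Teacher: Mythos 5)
Your global architecture (radial-shadow decomposition of $\Delta_v$, a per-facet lower bound that is superadditive in $\nu(\omega_i)$, then convexity of $t\mapsto t^{(n+1)/(n-1)}$ and the ratio $|D_n|/|D_{n-1}|\geq c/\sqrt n$) is sound, and the final arithmetic does yield $c\,N^{-\frac{2}{n-1}}|D_n|$ once the local estimate is granted. But the entire content of the theorem sits in that local estimate $\int_{\omega_i}\frac{|\rho_P(v)^n-1|}{n}\,d\nu \geq c_0\, n^{-1}|D_{n-1}|^{-2/(n-1)}\nu(\omega_i)^{(n+1)/(n-1)}$, which you assert rather than prove, and which — as you yourself concede — is false as stated: a facet whose shadow concentrates near the circle $\{v:\cos\angle(v,u_i)=h_i\}$ has $\rho_P\approx 1$ on essentially all of $\omega_i$ and contributes arbitrarily less than the claimed amount. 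The proposed repair, charging such a facet to the removed cap $C_i=D_n\cap\{x:\langle x,u_i\rangle> h_i\}$, does not close the gap: many distinct facets (for instance thin, ribbon-like facets tiling a band near the sphere, with nearly parallel hyperplanes at nearly equal heights) share essentially the same cap, so summing per-facet ``removed-cap'' charges over-counts the single deficit $|D_n\setminus P|$ by a factor that can be as large as the number of such facets. The claim that ``the overlaps among them are cheap to account for'' is exactly the statement that needs proof, and the step you defer — assembling the shadow bound and the cap bound into one clean per-facet inequality valid for every facet geometry — is the theorem itself, not a routine verification. As written, this is a plausible strategy outline rather than a proof.

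For comparison: the paper does not prove this statement at all; it quotes it as Theorem 2 of \cite{Lud06}. The only place the paper engages with the lower bound (the proof of Theorem \ref{suprisig}) starts from a per-facet inequality imported from \cite{Lud06}, namely $|D_n\setminus P|\geq \frac 1n\sum_{i=1}^{N}|F_i\cap D_n|\sqrt{1-\big(|F_i\cap D_n|/|D_{n-1}|\big)^{\frac{2}{n-1}}}$ together with Lemma 9 of \cite{Lud06}, and then only performs the optimization step (Lagrange multipliers plus the isoperimetric inequality). In other words, the hard geometric per-facet estimate — the analogue of your local estimate, including the treatment of the degenerate facets you describe — is precisely the ingredient taken from \cite{Lud06}, and precisely the ingredient your proposal leaves unproven.
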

	\section{Main results}
\begin{thm}{\label{main_thm}}
	Let $ P^b_{n,N} $  be  the polytope with at most $N$ facets that is best-approximating for $D_n$ with respect to the symmetric volume difference. Then for all $ N \geq n^n $, 
	\be
	\Delta_v(D_n,P^b_{n,N}) \leq \left(I+II+O\left(n^{-0.5}\right)\right)N^{-\frac{2}{n-1}}|D_n|,
	\ee
	where $ I = \int_{0}^{1}t^{-1}(1-e^{-\ln(2)t})dt$ and $ II= \int_{0}^{\infty}e^{-\ln(2)e^{t}}dt.$  It follows that
	\[
		\text{ldiv}_{n-1} \leq (\pi e)^{-1}(I+II) + o(1) \sim \frac{0.96}{\pi e} + o(1).
	\]
\end{thm}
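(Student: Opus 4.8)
The plan is to construct, for each $N \geq n^n$, an explicit random polytope and then optimize the choice of its randomness to extract the constants $I$ and $II$. Since we want an upper bound on the best-approximating polytope, it suffices to exhibit \emph{one} polytope with $N$ facets achieving the claimed deviation. The natural construction is a ``circumscribed'' random polytope: sample $N$ i.i.d.\ points $X_1,\dots,X_N$ on $\mathbb{S}^{n-1}$ from a suitably chosen density (a perturbation of $\sigma$), and let $P$ be the intersection of the $N$ tangent halfspaces $\{x : \langle x, X_i\rangle \leq 1\}$. Then $D_n \subset P$, so $\Delta_v(D_n,P) = |P \setminus D_n| = \int_{\mathbb{S}^{n-1}} \bigl(\rho_P(\theta)^n/n - 1/n\bigr)\, d\theta$ where $\rho_P$ is the radial function of $P$; near a typical direction the excess radius is governed by the largest of the relevant inner products, i.e.\ by an extreme-value (max of $N$ i.i.d.) computation.

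First I would fix a direction $\theta \in \mathbb{S}^{n-1}$ and compute $\E\bigl[\rho_P(\theta)^n - 1\bigr]$. The key local picture: in a cap of geodesic radius $\sim N^{-1/(n-1)}$ around $\theta$, each sampled point $X_i$ contributes a constraint that, rescaled, looks like a random halfspace at height $\approx 1 + \tfrac12 \|y\|^2$ at the planar coordinate $y \in \R^{n-1}$; the number of points falling in a cap of ``area parameter'' $s$ is Poisson-like with mean proportional to $s$, and $\rho_P(\theta) - 1$ is essentially the distance one must travel outward before being cut off, which is the minimum over $i$ of the rescaled cap-heights. Changing variables so that the expected number of points below height $1+u$ equals $g(u)$ for an appropriate increasing $g$, one gets $\E[\rho_P(\theta)^n - 1] \approx n\int_0^\infty \Pr(\text{no point below height } 1+u)\, du = n \int_0^\infty e^{-g(u)}\, du$ after the $N^{-2/(n-1)}$ rescaling. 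The two pieces $I$ and $II$ arise from splitting this integral: the ``bulk'' regime $u \lesssim$ a threshold contributes $\int_0^1 t^{-1}(1-e^{-\ln(2)t})\,dt$ and the ``tail'' regime contributes $\int_0^\infty e^{-\ln(2)e^t}\,dt$ — the appearance of $\ln 2$ signals that the optimal sampling density is tuned so that on each cap the ``first arrival'' has a specific exponential rate; I would reverse-engineer the density from the target integrand. Integrating over $\theta \in \mathbb{S}^{n-1}$ and using Lemma~1.1 ($|D_n|/|D_{n-1}| \asymp n^{-1/2}$, more precisely the sharp asymptotics $n|D_n| \sim \sqrt{n}\,|\partial D_n|$ relations) converts the per-direction estimate into the bound with $|D_n|$ on the right and produces the $(\pi e)^{-1}$ factor after matching $N^{-2/(n-1)}$ against $\mathrm{as}(D_n)^{(n+1)/(n-1)}$ in Theorem~\ref{ludlemma}; the $O(n^{-0.5})$ collects the cap-flattening error, the Poisson approximation error, and the boundary-effect error between caps.

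The second half of the statement, $\mathrm{ldiv}_{n-1} \leq (\pi e)^{-1}(I+II) + o(1)$, is then a direct corollary: by Theorem~\ref{ludlemma}, $\mathrm{ldiv}_{n-1}$ equals $2\, \mathrm{as}(K)^{-(n+1)/(n-1)}$ times the limit of $\min \Delta_v(D_n,P_{n,N}) / N^{-2/(n-1)}$ (any fixed $C^2$ body $K$ works; take $K = D_n$, for which $\mathrm{as}(D_n) = |\partial D_n|$); substituting the just-proved upper bound and using Zador/Ludwig-type asymptotics $\mathrm{as}(D_n)^{(n+1)/(n-1)} / (n\,|D_n|) \to$ the constant that produces $(\pi e)^{-1}$ (equivalently $|\partial D_n|^{(n+1)/(n-1)}/(n|D_n|) = \bigl((n|D_n|)^{2/(n-1)}\bigr)\cdot$ lower-order, and $(n|D_n|)^{1/(n-1)} \to \sqrt{2\pi e}$) yields the claimed numerical value $\sim 0.96/(\pi e)$.

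I expect the main obstacle to be the extreme-value / Poissonization step: controlling the joint behavior of the $N$ constraints near a fixed direction uniformly enough that the per-direction expectation $\E[\rho_P(\theta)^n-1]$ is pinned down to within $o(1)\cdot N^{-2/(n-1)}$, and simultaneously showing the fluctuations integrate away so that $\E\,\Delta_v$ (not just a typical value) has the stated form. Two technical points inside this are delicate: (i) the reduction from the sphere to the flat quadratic model has a curvature error of order (cap radius)${}^2 = N^{-2/(n-1)}$, exactly the scale of the answer, so the error estimate must be done with the correct constant and absorbed into $O(n^{-0.5})$ via dimensional counting, not discarded; and (ii) the optimization over the sampling density — which is where $\ln 2$, and hence the precise split into $I$ and $II$, comes from — requires a calculus-of-variations argument identifying the extremal density, and then checking it is admissible (nonnegative, integrates to something compatible with using exactly $N$ facets). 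Everything else — the isoperimetric comparisons, passing from $D_n \subset P$ to $\Delta_v = |P\setminus D_n|$, and deducing the $\mathrm{ldiv}_{n-1}$ corollary — is bookkeeping with the inequalities already quoted in the excerpt.
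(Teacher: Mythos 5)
Your construction cannot prove the theorem. You take tangent halfspaces $\{x:\langle x,X_i\rangle\le 1\}$, so $D_n\subset P$ and $\Delta_v(D_n,P)=|P\setminus D_n|$ --- i.e.\ you are working inside the class of \emph{circumscribed} polytopes. But the entire point of the theorem is that this class is too small: by the Gruber--Zador asymptotics quoted in the introduction, for fixed $n$ and $N\to\infty$ the best circumscribed polytope satisfies $\min|P\setminus D_n|\sim\frac12\,\mathrm{div}_{n-1}\,|\partial D_n|^{\frac{n+1}{n-1}}N^{-\frac{2}{n-1}}$ with $\mathrm{div}_{n-1}=(2\pi e)^{-1}n+o(n)$, which is of order $n\,N^{-\frac2{n-1}}|D_n|$ --- a full factor of dimension larger than the bound $(I+II+O(n^{-1/2}))N^{-\frac2{n-1}}|D_n|$ you are trying to establish (the theorem covers all $N\ge n^n$, hence arbitrarily large $N$ for each fixed $n$, so this is a genuine contradiction, not an edge case). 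No tuning of the sampling density, Poissonization, or extreme-value analysis can repair this; the loss is intrinsic to requiring $D_n\subset P$. Relatedly, your claim that $I$ and $II$ both emerge from splitting the single integral for $|P\setminus D_n|$ into a bulk and a tail regime is reverse-engineered and wrong in structure: in the actual argument $I$ and $II$ come from two genuinely different sources, $\mathbb{E}|D_n\setminus P|$ and $\mathbb{E}|P\setminus D_n|$, and the former is identically zero in your construction.

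The missing idea is that the facet hyperplanes must be pushed \emph{inside} the ball. The paper intersects $N/2$ independent uniform random slabs $\{x:|\langle x,y_i\rangle|\le t_{n,N}\}$ of width $t_{n,N}=\sqrt{1-\bigl(\gamma|\partial D_n|/(N|D_{n-1}|)\bigr)^{2/(n-1)}}<1$, computes $\Pr(x\in P)=(1-\alpha_{n,r,t})^{N/2}$ with $\alpha_{n,r,t}$ the exact cap-plus-cone probability, and obtains via Fubini
\begin{equation*}
\mathbb{E}[\Delta_v(D_n,P)]=|\partial D_n|\Bigl(\int_{t_{n,N}}^{1}r^{n-1}\bigl(1-(1-\alpha_{n,r})^{N/2}\bigr)\,dr+\int_{1}^{\infty}r^{n-1}(1-\alpha_{n,r})^{N/2}\,dr\Bigr),
\end{equation*}
where the first term produces $I$, the second produces $II$, and the only optimization is over the scalar depth parameter $\gamma$, with minimum at $\gamma=\ln 2$ (this, not a variational problem over densities, is where $\ln 2$ comes from); the sampling measure stays uniform. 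The deduction of $\mathrm{ldiv}_{n-1}\le(\pi e)^{-1}(I+II)+o(1)$ from the volume bound in your last step is essentially the same bookkeeping the paper does, but it is moot until the construction is replaced by one that is allowed to cut into the ball.
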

\begin{remark}{\label{niceremark}}
	The bound on $ N$ can be improved from $ N \geq n^n $ to $ N \geq 10^{n}.$ This causes a change to the constant before $ \NN $. The proof is slightly different from the proof of Theorem \ref{main_thm}, and for completeness we provide a sketch of the proof in Section \ref{Techandloose}.
\end{remark} 
In \cite{Lud06}, it was shown that $ \Delta_v(D_n,P) \geq c\NN|D_n|.$ We optimize their argument to obtain the following result.
\begin{thm}{\label{suprisig}}
	 Let polytope $ P $ in $\R^n$ with at most $ N \geq n^n$ facets satisfies
	\[
	\Delta_v(D_n,P) \geq (\frac{1}{4}+O(\NN))N^{-\frac{2}{n-1}}|D_n|,
	\]
	and therefore $ \text{ldiv}_{n-1} \geq (4\pi e)^{-1} + o\left(1\right)$.  
\end{thm}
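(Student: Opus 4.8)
\textbf{Proof proposal for Theorem \ref{suprisig}.}
The plan is to revisit the lower-bound argument of \cite{Lud06} (Theorem \ref{lowebound}) and track the constants carefully rather than discarding them. Fix a polytope $P$ with at most $N\geq n^n$ facets, and first reduce to the case where $P$ is essentially inscribed in $D_n$: by an elementary truncation/translation argument one may assume that each facet hyperplane of $P$ is supporting $D_n$ from inside, at the cost of only a lower-order multiplicative loss (this is where a factor is gained over the crude bound in \cite{Lud06}). Then $D_n\setminus P$ contains, for each facet $F_i$ with outer unit normal $u_i$ at distance $h_i$ from the origin, the spherical "cap-minus-facet" region $R_i$ consisting of points of $D_n$ cut off by the hyperplane of $F_i$. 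These regions have disjoint interiors, so
\[
\Delta_v(D_n,P)\;\geq\;\sum_{i=1}^{N}|R_i|.
\]

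Next I would estimate $|R_i|$ from below in terms of the solid angle $\omega_i$ that $F_i$ subtends at the center, i.e. the normalized surface measure $\sigma(\Omega_i)$ of the radial projection $\Omega_i\subset\mathbb S^{n-1}$ of $F_i$. Since the caps cover $\mathbb S^{n-1}$ we have $\sum_i\omega_i\geq 1$, and there are at most $N$ of them, so on average $\omega_i\gtrsim 1/N$. For a cap of normalized measure $\omega$ the missing volume $|R|$, optimized over the height $h$ of the cutting plane, is bounded below by $c\,\omega\,(\text{something like }\omega^{2/(n-1)})|D_n|$; the sharp constant here comes from the one-dimensional calculus problem of trading off cap height against base size, and by convexity (Jensen) the sum $\sum_i|R_i|$ is minimized when all $\omega_i$ are equal to $1/N$. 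Carrying the constant through this optimization is exactly what produces the factor $\tfrac14$ in place of the unspecified $c$: one should find $|R_i|\geq(\tfrac14+O(\NN))N^{-\frac{2}{n-1}}\,\omega_i\,N|D_n|$ after summing, using $\sum\omega_i\geq 1$.

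The main obstacle is the asymptotic evaluation of the relevant geometric quantities uniformly in $n$: one must compute, to leading order as $n\to\infty$, the volume of a spherical cap of normalized base measure $\omega$ and the surface-to-base ratios, and show that the error terms are genuinely $O(\NN)$ (equivalently $O(n^{-1}\log N)$-type corrections are absorbed) rather than depending badly on how the $\omega_i$ are distributed. Concentration of measure on $\mathbb S^{n-1}$ — most of the mass near the equator of any cap — is what makes the extremal configuration the balanced one and makes the constant clean; the delicate point is handling facets with $\omega_i$ much larger than $1/N$ (few, but potentially contributing a lot) versus the bulk. Once the per-facet bound $|R_i|\geq(\tfrac14+o(1))\NN|D_n|\,(N\omega_i)$ is established with a uniform-in-$n$ error, summing against $\sum_i\omega_i\geq1$ gives $\Delta_v(D_n,P)\geq(\tfrac14+O(\NN))N^{-\frac{2}{n-1}}|D_n|$, and combining with Theorem \ref{ludlemma} and Zador's evaluation $\textrm{div}_{n-1}\sim n/(2\pi e)$ to translate into a statement about $\textrm{ldiv}_{n-1}$ yields $\textrm{ldiv}_{n-1}\geq(4\pi e)^{-1}+o(1)$.
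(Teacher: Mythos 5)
Your overall strategy (take the lower-bound machinery of \cite{Lud06}, equalize the facet contributions by a convexity/Lagrange argument, then convert to an asymptotic bound on $\textrm{ldiv}_{n-1}$ via Theorem \ref{ludlemma} and Stirling-type asymptotics) matches the paper in spirit, but two of your steps have genuine gaps. The first is the opening reduction ``one may assume $P$ is essentially inscribed, at the cost of only a lower-order multiplicative loss.'' For a lower bound over \emph{arbitrary} polytopes this reduction needs $\Delta_v(D_n,P')\le(1+o(1))\Delta_v(D_n,P)$ for the modified inscribed $P'$, and no truncation/translation argument gives that: the near-optimal polytopes have every facet genuinely protruding through the sphere (by Lemma 9 of \cite{Lud06}, each facet of the best-approximating polytope is bisected by the sphere, $|F_i\cap D_n|=|F_i\cap D_n^c|$), so forcing the inscribed position changes the extremal problem at the level of the leading constant, which is exactly what Theorem \ref{suprisig} is trying to pin down. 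The paper makes no such positioning reduction; it quotes from the proof of Theorem 2 in \cite{Lud06} the per-facet bound on $|D_n\setminus P|$ together with the bisection property, and that optimality condition is what plays the role you assign to ``inscribed.'' Relatedly, your claim that the caps cut off by the facet hyperplanes have disjoint interiors is false (caps of adjacent facets overlap near common ridges); disjointness holds for the radial regions over the facets, and then the relevant per-facet volume is a cone/prism-type quantity of order $\frac1n|F_i\cap D_n|(1-h_i)$, not a full cap volume, which is precisely why the constant comes out as $\tfrac14$ rather than the larger constant a cap-covering count would suggest.

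The second gap is that the step which is supposed to produce $\tfrac14$ is left as a black box (``one should find\dots''), and the constraint you optimize against is different from the paper's. You use the covering condition $\sum_i\omega_i\ge1$; the paper instead constrains the total facet area, $|D_{n-1}|\sum_i r_i^{n-1}$ tied to $|\partial P^b_{n,N}|$, and bounds it from below by combining the isoperimetric inequality (Theorem \ref{isoperemetric}) with the upper bound of Theorem \ref{main_thm} (which forces $|P^b_{n,N}|\ge(1-c\NN)|D_n|$ and hence $|\partial P^b_{n,N}|\ge(1-c\NN)|\partial D_n|$) --- an ingredient absent from your sketch. After the equalization $r_1=\dots=r_N$ one gets $\Delta_v\ge\frac{|\partial P|}{2n}\bigl(1-\sqrt{1-\bigl(\tfrac{|\partial P|}{|D_{n-1}|N}\bigr)^{\frac{2}{n-1}}}\bigr)$, and the expansion $1-\sqrt{1-x}=\tfrac{x}{2}+O(x^2)$ is what yields $\tfrac14+O(\NN)$; nothing in your cap/solid-angle route visibly produces this value, and with the inscribed normalization it would not. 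There is also a bookkeeping slip: a per-facet bound $|R_i|\ge(\tfrac14+o(1))\NN|D_n|(N\omega_i)$ would sum to order $N\cdot\NN|D_n|$; what you need is $|R_i|\gtrsim\tfrac14\NN|D_n|\,\omega_i$. Finally, the passage to $\textrm{ldiv}_{n-1}\ge(4\pi e)^{-1}+o(1)$ only needs Theorem \ref{ludlemma} and Stirling's estimate for $|\partial D_n|^{\frac{2}{n-1}}$; Zador's evaluation of $\textrm{div}_{n-1}$ concerns circumscribed approximation and is not the relevant input.
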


\begin{thm}{\label{sec_thm}}
	Let $ Q^b_{n,N} $ be the polytope with at most $N$ facets that is best-approximating for $D_n$ with respect to the surface area deviation. Then for all $ N \geq n^n $
	\be
	\Delta_s\left(Q^b_{n,N}, D_n\right) \leq \left(2\cdot I+II + \frac{1}{2} + O\left(n^{-0.5}\right)\right)\NN|\partial D_n|,
	\ee
	where $ I = \int_{0}^{1}t^{-1}(1-e^{-\ln(2)t})dt,$ $ II= \int_{0}^{\infty}e^{-\ln(2)e^{t}}dt.$
\end{thm}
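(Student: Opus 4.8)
The plan is to reduce the surface-area deviation bound to the volume-difference estimate already developed for Theorem \ref{main_thm}, by controlling the two pieces of $\Delta_s$ separately. Write $Q = Q^b_{n,N}$; by the optimality of $Q^b_{n,N}$ it suffices to exhibit \emph{some} polytope with at most $N$ facets achieving the claimed bound, and the natural candidate is the same random construction $P_{n,N}$ used for the volume estimate (or its circumscribed/inscribed variant, whichever is convenient). Since $\partial(A\cup B)$ and $\partial(A\cap B)$ decompose along $\partial A$ and $\partial B$, one has the identity
\[
\Delta_s(Q,D_n)=\big(|\partial D_n\setminus Q|-|\partial Q\cap \mathrm{int}(D_n)|\big)+\big(|\partial Q\setminus D_n|-|\partial D_n\cap \mathrm{int}(Q)|\big),
\]
and for a polytope that is \emph{inscribed} in $D_n$ the second bracket is governed by how much the facet area exceeds the spherical cap area it subtends, while the first bracket is governed by the spherical area left uncovered. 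I would instead use the \emph{circumscribed} variant, where $\partial Q$ lies outside $D_n$, so $|\partial D_n\cap\mathrm{int}(Q)|=|\partial D_n|$ and the first bracket vanishes; then $\Delta_s(Q,D_n)=|\partial Q|-|\partial D_n|$, the excess surface area of the circumscribed random polytope.

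The key technical step is to estimate $\E\big[|\partial Q|-|\partial D_n|\big]$ for the random circumscribed polytope with $N$ facets, in the regime $N\geq n^n$. Each facet lies in a tangent hyperplane to $D_n$ at a point $u$ chosen according to $\sigma$, and the facet is the Voronoi-type cell of $u$ among the $N$ chosen tangent points. The surface area of a facet tangent at $u$ is, up to first order in the (small) angular radius $\rho$ of the corresponding spherical cell, the area of the cell's radial projection onto the tangent plane; expanding $\tan$ and comparing with the spherical area $|\partial D_n|/N$ on average, the excess per facet is of order $\rho^2$ times the cap area, i.e. of order $N^{-1}\cdot N^{-2/(n-1)}$ after accounting for the typical cell diameter $\rho \sim N^{-1/(n-1)}$. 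Summing over $N$ facets gives the order $N^{-2/(n-1)}|\partial D_n|$; the constant $2I+II+\tfrac12$ should emerge from the same extreme-value / Poisson-type limit computation that produces $I+II$ in Theorem \ref{main_thm}, the factor $2$ reflecting that a quadratic deviation in the radial direction contributes to surface area with a Jacobian factor (roughly, a cap of height $h$ has volume $\sim h$ but its boundary deviation scales with a factor $2$ in the relevant expansion), and the additional $\tfrac12$ coming from a lower-order term in the tangent expansion $\tan\theta\approx\theta+\theta^3/3$. I would extract these constants by writing the facet-area excess exactly, substituting the limiting distribution of the normalized cell radii (the same $e^{-\ln(2)e^t}$-type law appearing in $I$ and $II$), and evaluating the resulting one-dimensional integrals.

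The main obstacle is the constant-sharp bookkeeping: getting the precise coefficient $2I+II+\tfrac12$ rather than just the correct order $N^{-2/(n-1)}$ requires tracking \emph{all} the $O(N^{-2/(n-1)})$-order terms in the expansion of a facet's area, including the interaction between the tangent-plane Jacobian and the limiting distribution of cell sizes, and justifying the exchange of limit, expectation, and the sum over facets uniformly in $n$ up to the stated $O(n^{-0.5})$ error. I expect this to mirror, almost line for line, the corresponding step in the proof of Theorem \ref{main_thm}, so the work is to identify exactly which extra Jacobian factors distinguish surface area from volume and to verify that they contribute precisely the extra $I+\tfrac12$; the probabilistic concentration needed to pass from expectation to the existence of a good deterministic $Q$ (hence to the best-approximating $Q^b_{n,N}$) is routine once the first-moment computation is in hand.
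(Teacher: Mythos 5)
Your reduction to the \emph{circumscribed} random polytope is where the argument breaks. For a polytope $Q\supset D_n$ all of whose facet hyperplanes are tangent to the sphere, the cone--volume formula gives exactly $|Q|=\frac1n|\partial Q|$, hence $\Delta_s(Q,D_n)=|\partial Q|-|\partial D_n| = n\,|Q\setminus D_n|$. But circumscribed approximation of the ball loses a factor of dimension in volume: by Gruber's asymptotic formula together with Zador's estimate $\mathrm{div}_{n-1}=(2\pi e)^{-1}n+o(n)$ (quoted in the introduction), the best circumscribed polytope with $N$ facets has $|Q\setminus D_n|\gtrsim \frac n2 \NN|D_n|$, so along your route $\Delta_s(Q,D_n)\gtrsim \frac n2\NN|\partial D_n|$ --- a factor $n$ larger than the bound $\left(2I+II+\frac12+O(n^{-0.5})\right)\NN|\partial D_n|$ you are trying to prove. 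Your own facet heuristic conceals this: the radial (gnomonic) projection onto the tangent plane at angular distance $\theta$ has Jacobian $\sec^{n}\theta\approx 1+\frac n2\theta^2$, so the per-facet area excess is of order $n\theta^2\sim n\NN$ times the cell area, not $\theta^2$; the factor $n$ you dropped is precisely the dimension factor that the paper's construction removes by letting the facet hyperplanes cut into the ball at the common distance $t_{n,N}=1-\frac12\left(1+O(n^{-0.5})\right)\NN$ from the origin. Consequently the constants $2$ and $\frac12$ cannot ``emerge'' from the circumscribed extreme-value computation: that computation has the wrong order altogether, and carrying it out rigorously would only confirm the loss of a factor $n$.

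For comparison, the paper's proof needs no facet-by-facet surface computation. It fixes one realization $P$ of the cutting random polytope from Theorem \ref{main_thm} that simultaneously satisfies $|D_n\setminus P|\le (I+O(n^{-0.5}))\NN|D_n|$ and $|P\setminus D_n|\le (II+O(n^{-0.5}))\NN|D_n|$, and then bounds the two pieces of $\Delta_s$ globally: $|\partial(P\cap D_n)|\ge\left(1-(I+O(n^{-0.5}))\NN\right)|\partial D_n|$ by the isoperimetric inequality applied to $P\cap D_n$, and $|\partial(P\cup D_n)|\le\left(1+(I+II+\frac12+O(n^{-0.5}))\NN\right)|\partial D_n|$ by the cone--volume formula for $P\cup D_n$, using that every facet hyperplane of $P$ lies at the same distance $t_{n,N}$ from the origin (division by $t_{n,N}$ is exactly where the extra $\frac12$ enters), combined with the volume bound on $|P\cup D_n|$; the $2I$ arises because $I$ appears once in each of the two bounds, not from any Jacobian or $\tan$-expansion. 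To salvage your plan you would have to replace the circumscribed variant by this cutting construction and replace the expected-facet-area analysis by such global isoperimetric and cone-volume arguments; as written, your key technical step targets the wrong polytope and is in any case only sketched.
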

	\begin{remark}
	The proof of Theorem \ref{sec_thm} implies that when $N\geq n^n$, there is a polytope $ P_{n,N} $ in $ \R^n $ with at most $ N$ facets that satisfies both
		\[
	\Delta_v\left(P_{n,N}, D_n\right) \leq \left(I+II+ O\left(n^{-0.5}\right)\right)N^{-\frac{2}{n-1}}|D_n|
	\]
	and
		\[
	\Delta_s\left(P_{n,N}, D_n\right) \leq \left(2\cdot I+II+ \frac{1}{2} + O\left(n^{-0.5}\right)\right)\NN|\partial D_n|,
	\]
where $ I = \int_{0}^{1}t^{-1}(1-e^{-\ln(2)t})dt$ and $ II= \int_{0}^{\infty}e^{-\ln(2)e^{t}}dt.$
	\end{remark}

	\begin{remark}
	In Theorem $ \ref{sec_thm}$ the bound of the number of facets can be improved from $ N\geq n^n $ to $ N \geq 10^{n}.$ This causes a  change to the constant before $ \NN $.
\end{remark}
\begin{remark}
			The author conjectures that in Theorem \ref{sec_thm} the estimate for the constant before the $ \NN $ can be improved.  
\end{remark}
\subsection{Asymptotic results}
In this section, we present some asymptotic results. First, let $ P^b_{n,N}\subset \R^n$ be the polytope with at most $N$ facets that is best-approximating for $D_n$ with respect to the symmetric volume difference.
The following corollaries are consequences of Theorem  \ref{main_thm}, Lemma \ref{lowebound} and Remark \ref{niceremark}.    
\begin{cor}
	If $ A \geq 10$ and the dimension is large enough, then 
	\[
	 \frac{\Delta_v\left(D_n,P^b_{n,A^{n}}\right)}{|D_n|} \in [cA^{-2},CA^{-2}].
	\] We conjecture that the limit $ \lim_{n\to\infty} \frac{\Delta_v\left(D_n,P^b_{n,A^{n}}\right)}{|D_n|}  $ exists.
\end{cor}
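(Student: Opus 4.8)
The plan is to set $N = A^n$ and read off the two inequalities from results already established; the only genuine computation is to compare $\NN$ with $A^{-2}$. Since $A \geq 10$ we have $N = A^n \geq 10^n$, so every statement quoted under the hypothesis ``$N \geq 10^n$'' is available.

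For the upper bound I would invoke the improved form of Theorem~\ref{main_thm} recorded in Remark~\ref{niceremark} (equivalently, Eq.~\eqref{firstineq}): there is an absolute constant $C$ with $\Delta_v(D_n, P_{n,N}) \leq C\,\NN\,|D_n|$ for the random polytope $P_{n,N}$ whenever $N \geq 10^n$. Because $P^b_{n,N}$ is best-approximating among polytopes with at most $N$ facets, $\Delta_v(D_n, P^b_{n,N}) \leq \Delta_v(D_n, P_{n,N})$. Writing $\NN = A^{-2n/(n-1)} = A^{-2}\cdot A^{-2/(n-1)} \leq A^{-2}$ (using $A \geq 1$) gives $\Delta_v(D_n, P^b_{n,A^n}) \leq C A^{-2}|D_n|$, valid for every $n \geq 2$.

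For the lower bound I would apply Theorem~\ref{lowebound} to $P^b_{n,N}$: for $N > 10^n$ there is an absolute constant $c$ with $\Delta_v(D_n, P^b_{n,N}) \geq c\,\NN\,|D_n| = c A^{-2}\,A^{-2/(n-1)}|D_n|$. Now $A^{-2/(n-1)} = \exp(-2\ln A/(n-1)) \geq \tfrac12$ as soon as $2\ln A/(n-1) \leq \ln 2$, i.e. $n \geq 1 + 2\ln A/\ln 2$; this is exactly the content of ``the dimension is large enough'' (large enough as a function of $A$). Hence $\Delta_v(D_n, P^b_{n,A^n}) \geq \tfrac{c}{2} A^{-2}|D_n|$ for such $n$, and combining this with the upper bound and relabelling the absolute constants yields $\Delta_v(D_n, P^b_{n,A^n})/|D_n| \in [cA^{-2}, CA^{-2}]$. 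One must use Theorem~\ref{lowebound} here rather than the sharper Theorem~\ref{suprisig}, since the latter requires $N \geq n^n$, which would force $A \geq n$ and is incompatible with letting $n \to \infty$ for fixed $A$.

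There is essentially no obstacle: the statement is a direct corollary of the main theorems. The only point needing care is that $\NN$ agrees with $A^{-2}$ only asymptotically, and is comparable to it up to absolute constants precisely when $n$ is large relative to $\ln A$, which is the hypothesis. The asserted existence of $\lim_{n\to\infty}\Delta_v(D_n,P^b_{n,A^n})/|D_n|$ is only conjectured and is not part of what must be proved; establishing it would require a sharp asymptotic for $\text{ldiv}_{n-1}$ along the range $N = A^n$, which the present two-sided bounds do not supply.
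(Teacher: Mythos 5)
Your argument is correct and is exactly the route the paper intends: the paper derives this corollary directly from Theorem \ref{main_thm} together with Remark \ref{niceremark} (to cover $N=A^n\geq 10^n$) for the upper bound and Theorem \ref{lowebound} for the lower bound, with the same comparison $\NN=A^{-2}A^{-2/(n-1)}\asymp A^{-2}$ once $n$ is large relative to $\ln A$. Your observation that Theorem \ref{suprisig} cannot be used here because it requires $N\geq n^n$ is also the right reading of why the remark (rather than the sharper constants) is invoked.
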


\begin{cor}
	Let $ f\left(n\right) $ be a sequence that satisfies  $ f(n) = e^{\omega(n)}.$ Then 
	\[
	\lim_{n\to\infty} \frac{\Delta_v\left(D_n,P^b_{n,f(n)}\right)}{|D_n|} = 0.
	\]
\end{cor}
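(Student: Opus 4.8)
The plan is to read this off directly from Theorem~\ref{main_thm} together with Remark~\ref{niceremark} (equivalently, from the bound stated in the abstract). Those results furnish an absolute constant $C$ such that for every $n \geq 2$ and every $N \geq 10^n$,
\[
0 \leq \Delta_v\!\left(D_n, P^b_{n,N}\right) \leq C\, N^{-\frac{2}{n-1}}\, |D_n|.
\]
The point is that the coefficient $I + II + O(n^{-0.5})$ appearing in Theorem~\ref{main_thm} is bounded by an absolute constant independent of $n$: the integrand defining $I$ tends to $\ln 2$ as $t \to 0^+$ and stays bounded on $[0,1]$, so $I < \infty$; and the integrand defining $II$ decays doubly exponentially, so $II < \infty$. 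Thus $C$ above may be taken to be, say, $I + II + 1$ once $n$ is large.

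Given $f$ with $f(n) = e^{\omega(n)}$, the first step is to note that this exactly means $\ln f(n)/n \to \infty$; in particular $f(n) \geq 10^n$ for all sufficiently large $n$, so from that point on the displayed inequality applies with $N = f(n)$, yielding
\[
0 \leq \frac{\Delta_v\!\left(D_n, P^b_{n,f(n)}\right)}{|D_n|} \leq C\, f(n)^{-\frac{2}{n-1}} = C \exp\!\left(-\frac{2\ln f(n)}{\,n-1\,}\right).
\]
The second step is to observe that $\ln f(n)/(n-1) \to \infty$ as well (again because $\ln f(n)/n \to \infty$), so the exponent tends to $-\infty$ and the right-hand side tends to $0$; the sandwich principle then gives $\Delta_v(D_n, P^b_{n,f(n)})/|D_n| \to 0$, which is the claim.

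There is essentially no obstacle here: all the substance lies in Theorem~\ref{main_thm}, and the corollary is a one-line asymptotic consequence. The only point that deserves a word is that the upper bound is invoked in the regime $N \geq 10^n$ rather than merely $N \geq n^n$, since a sequence with $\ln f(n) = \omega(n)$ need not satisfy $f(n) \geq n^n$ (e.g.\ $f(n) = n!$); this stronger range is precisely what Remark~\ref{niceremark} supplies. If one were content to restrict to sequences growing at least as fast as $n^n$, Theorem~\ref{main_thm} alone would suffice.
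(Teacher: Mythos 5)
Your argument is correct and matches the approach the paper intends (the corollary is stated as a direct consequence of Theorem~\ref{main_thm} together with Remark~\ref{niceremark}, with no separate proof given). You also correctly flag the one subtle point, namely that $f(n)=e^{\omega(n)}$ guarantees $f(n)\geq 10^n$ eventually but not $f(n)\geq n^n$, so Remark~\ref{niceremark} rather than Theorem~\ref{main_thm} alone is the ingredient actually needed.
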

\begin{remark}{\label{noaprrox}}
	It can  be easily proven that if $ f\left(n\right) =e^{o(n)}$. Then,
	\[
	\lim_{n\to\infty} \frac{\Delta_v\left(D_n,P^b_{n,f(n)}\right)}{|D_n|} = 1.
	\] 
\end{remark}

\subsection{Conjectures}\label{asymptoticresutls}
Due to symmetry considerations, we believe that Remark \ref{noaprrox} can be strengthened to:
\begin{conj}
	If $ N \leq 2^n$ and the dimension is large enough, then
	\[
		\lim_{n\to\infty} \frac{\Delta_v(D_n,P^b_{n,N})}{|D_n|} = 1.
	\]
\end{conj}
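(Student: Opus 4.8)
The plan is to reduce the conjecture to one inequality about polytopes with few facets, reformulate that inequality via radial functions, and then attack it with the circumscribed polytopal lower bounds of Gruber and Zador. A simplex of volume $o(|D_n|)$ has $\Delta_v(D_n,P)=(1+o(1))|D_n|$, so $\limsup_n\Delta_v(D_n,P^b_{n,N})/|D_n|\le 1$, and it suffices to prove $\liminf_n\Delta_v(D_n,P^b_{n,N})/|D_n|\ge 1$. If this fails, then for some fixed $\tau>0$ and infinitely many $n$ there is a polytope $P\subseteq\R^n$ with at most $2^n$ facets and $\Delta_v(D_n,P)\le(1-\tau)|D_n|$; writing $\Delta_v(D_n,P)=|D_n\setminus P|+|P\setminus D_n|$ then forces $|D_n\cap P|\ge\tau|D_n|$ and $|P|\le 2|D_n|$. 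So everything reduces to the statement that \emph{a polytope with at most $2^n$ facets and volume at most $2|D_n|$ satisfies $|D_n\cap P|=o_n(1)|D_n|$}.

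To recast this, put the origin at an interior point of $D_n\cap P$ and let $\rho_P$ be the radial function of $P$; then $|D_n\cap P|/|D_n|=\int_{\mathbb S^{n-1}}\min(1,\rho_P(\theta))^n\,d\sigma(\theta)$. Choosing $\varepsilon_n\to0$ slowly enough that $(1-\varepsilon_n)^n\to0$ (e.g. $\varepsilon_n=(\log\log n)/n$) and splitting the integral at level $1-\varepsilon_n$ reduces the displayed statement to
\[
\sigma\bigl(\{\theta\in\mathbb S^{n-1}:\rho_P(\theta)\ge 1-\varepsilon_n\}\bigr)=o_n(1),
\]
i.e. a polytope of volume at most $2|D_n|$ with at most $2^n$ facets cannot radially reach into the thin shell $\{1-\varepsilon_n\le|x|\le1\}$ on more than a vanishing fraction of directions. (The sub-case $\varepsilon_n=0$, ``reaching past the unit sphere'', is what controls $|P\setminus D_n|$; the genuinely hard case is $0<\varepsilon_n\to 0$, where $P$ hugs a sphere \emph{strictly inside} $\mathbb S^{n-1}$.)

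For the main step, suppose $\sigma(\Omega)\ge\tau$ with $\Omega=\{\rho_P\ge 1-\varepsilon_n\}$. Then $(1-\varepsilon_n)\Omega\subseteq P$, so $L:=\mathrm{conv}((1-\varepsilon_n)\Omega)$ is a convex body circumscribed by $P$. Since $(1-\varepsilon_n)\mathbb S^{n-1}$ is strictly convex, every point of $(1-\varepsilon_n)\Omega$ is an extreme point of $L$, so the spherical part of $\partial L$ has affine surface area at least $(1-\varepsilon_n)^{n(n-1)/(n+1)}\,\tau\, n|D_n|\gtrsim \tau\, n|D_n|/\log n$ for the chosen $\varepsilon_n$. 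If the circumscribed polytopal lower bound
\[
|P\setminus L|\;\gtrsim\;\tfrac12\,\mathrm{div}_{n-1}\;\operatorname{as}(L)^{\frac{n+1}{n-1}}\;N^{-\frac{2}{n-1}},\qquad \mathrm{div}_{n-1}\sim \tfrac{n}{2\pi e},
\]
of Gruber and Zador \cite{gruber1993asymptotic,zador1982asymptotic} were valid here (with $N=2^n$, so $N^{-2/(n-1)}\to\tfrac14$), the right‑hand side would be $\gtrsim \tau\, n\,|D_n|/\log n\gg 2|D_n|$, contradicting $|P|\ge|P\setminus L|$ and $|P|\le 2|D_n|$. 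Running this for every $\tau>0$ would give the displayed $o_n(1)$ bound, hence the reduced statement and the conjecture; the same computation would also handle $2^n$ replaced by any $e^{O(n)}$, recovering and sharpening Remark~\ref{noaprrox}.

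The main obstacle is that Gruber's theorem is an asymptotic statement for $N\to\infty$ with $n$ \emph{fixed} — its coefficient $\tfrac12\mathrm{div}_{n-1}\sim n/(4\pi e)$ is an $N\to\infty$ limit — whereas here $N$ is exponential in $n$ and $n\to\infty$; the lower bounds available in this regime, Theorems~\ref{lowebound} and~\ref{suprisig}, have coefficients that are \emph{absolute constants}, and a constant times $N^{-2/(n-1)}|D_n|$ is far too small to beat $2|D_n|$. What is needed is a circumscribed lower bound whose coefficient genuinely grows with $n$ (even a $\operatorname{polylog}(n)$ factor would suffice after the above choice of $\varepsilon_n$), valid when $N=e^{\Theta(n)}$ and for the possibly irregular body $L$. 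Such a bound is expected because circumscription is rigid — one cannot cover a near‑round convex body economically with exponentially few facets — but proving it, and pinning the threshold to exactly $2^n$ (where, as the author notes, central symmetry and the cross‑polytope $\{x:\sum_i|x_i|\le1\}$, which has exactly $2^n$ facets, should be extremal), is the heart of the conjecture.
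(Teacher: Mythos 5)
This statement is one of the paper's open conjectures: the paper offers no proof of it (it is motivated only by symmetry considerations and by Remark~\ref{noaprrox}, which covers the much weaker regime $N=e^{o(n)}$), so the only question is whether your argument closes it — and it does not. Your reductions are fine as far as they go: the $\limsup\le 1$ direction is trivial, the passage to ``a polytope with at most $2^n$ facets and $|P|\le 2|D_n|$ has $|D_n\cap P|=o(1)|D_n|$'' is correct, the radial-function identity and the splitting at $1-\varepsilon_n$ with $n\varepsilon_n\to\infty$ are correct, and the observation that $L=\mathrm{conv}((1-\varepsilon_n)\Omega)\subseteq P$ with the spherical part of $\partial L$ carrying affine surface area of order $\tau n|D_n|/\log n$ is a reasonable bookkeeping step. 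But the decisive step is the appeal to the Gruber--Zador circumscribed bound with coefficient $\tfrac12\mathrm{div}_{n-1}\sim n/(4\pi e)$, and, as you yourself concede, that theorem is an asymptotic statement for $N\to\infty$ with the dimension fixed and the inner body $C^2$ with positive curvature; it gives no uniform inequality at $N=2^n$ with $n\to\infty$, and certainly not for the non-smooth hull $L$. The only lower bounds valid in the exponential regime are of the type of Theorems~\ref{lowebound} and~\ref{suprisig}, whose coefficients are absolute constants, and an absolute constant times $N^{-2/(n-1)}|D_n|\approx \tfrac14|D_n|$ cannot contradict $|P|\le 2|D_n|$. Worse, a coefficient growing with $n$ cannot hold uniformly for all exponential $N$: once $N=e^{Cn}$ with $C$ large, circumscribed polytopes achieving $|P\setminus D_n|\le |D_n|= e^{2C}N^{-2/(n-1)}|D_n|$ exist (this is exactly the phenomenon the paper exploits), so whatever is true must be specific to the threshold rate $\ln 2$. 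In other words, the missing lemma you ask for — a circumscribed lower bound at $N=e^{\Theta(n)}$ with a coefficient that grows (even polylogarithmically) in $n$, valid for irregular bodies such as $L$ — is not a technical gap to be patched but is essentially the conjecture itself, restated for the inner body $L$ instead of $D_n$. So the proposal is a plausible reduction and a correct identification of where the difficulty sits, but it does not constitute a proof, and the paper provides none either.
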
 
In order to present the last conjecture, we use a standard argument to show that if the dimension is fixed and the number of facets tends to infinity, then among all convex bodies with the same volume, the Euclidean ball is the hardest to approximate. 

For this purpose, let $ K $ be a convex body in $ \R^{n}$, and assume without loss of generality that $ |K| = |D_n|$. Then
\begin{equation}{\label{conjconj}}
\begin{aligned}\lim_{N\to\infty}\NNN\min_{P\text{ has at most \ensuremath{N} facets}}\Delta_{v}(K,P) & =\frac{1}{2}\textrm{ldiv}_{n-1}\text{as}(K)^{\frac{n+1}{n-1}}\\
& \leq\frac{1}{2}\textrm{ldiv}_{n-1}\text{as}(D_{n})^{\frac{n+1}{n-1}}\\
& =\lim_{N\to\infty}\NNN\min_{P\text{ has at most \ensuremath{N} facets}}\Delta_{v}(D_n,P),
\end{aligned}
\end{equation}
where the first and the last equalities follow from Lemma \ref{ludlemma}, and the  inequality follows from the affine isoperimetric inequality (Lemma \ref{affine}). The author believes that the limit in Eq. \eqref{conjconj} is unnecessary, i.e. 
\begin{conj}\label{macbeath1}
Fix $n\in\mathbb{N}, n\geq 2$ and $N\geq n+1$, and let $K$ be a convex body in $\R^n$. Then
	\[
	\min_{P \text{ has at most $ N $ facets}}\frac{\Delta_v(P,K)}{|K|} \leq \min_{P \text{ has at most $ N $ facets}}\frac{\Delta_v(P,D_n)}{|D_n|}.
	\]
\end{conj}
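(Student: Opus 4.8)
The plan is to establish the conjecture through an affine-invariant symmetrization argument, showing that among all convex bodies of a fixed volume the Euclidean ball maximizes the normalized best-approximation error. Set $f(K):=\min_{P}\Delta_v(P,K)/|K|$, the infimum being over polytopes $P\subset\R^n$ with at most $N$ facets (it is attained, cf.\ the body $P^b_{n,N}$ above). An invertible affine map $T$ multiplies $\Delta_v(TP,TK)$ and $|TK|$ by the same factor $|\det T|$ and preserves the number of facets of $P$, so $f$ is affine invariant; in particular $f(rD_n)=f(D_n)$ for all $r>0$, and after scaling we may assume $|K|=|D_n|$ and need only show $f(K)\le f(D_n)$ for every convex body $K$.

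First I would put the limiting scheme in place. For a fixed polytope $P$ the map $K\mapsto \Delta_v(P,K)/|K|$ is continuous on convex bodies in the Hausdorff metric, so $f$, a pointwise infimum of continuous functions, is upper semicontinuous. It is classical that some sequence of Steiner symmetrizations $K_k:=S_{u_k}\cdots S_{u_1}K$ converges in the Hausdorff metric to $D_n$ (recall $|K|=|D_n|$ and that Steiner symmetrization preserves volume). Hence, granted the one-step inequality $f(S_uK)\ge f(K)$ for every direction $u\in\mathbb{S}^{n-1}$, one gets $f(K)\le f(K_k)$ for every $k$, and upper semicontinuity at $D_n$ gives $f(K)\le\limsup_k f(K_k)\le f(D_n)$. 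Thus everything reduces to showing that Steiner symmetrization never decreases the normalized best-approximation error.

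To attack the one-step inequality, fix $u$, write $H=u^{\perp}$, let $\pi_H$ be the orthogonal projection onto $H$, and let $Q$ be an optimal polytope for $S_uK$ with $m\le N$ facets. Describe both bodies by their chords along $u$: over $y\in H$ the chord of $S_uK$ is $[-\ell(y)/2,\ell(y)/2]$ and that of $K$ is $[a(y),b(y)]$ with $b(y)-a(y)=\ell(y)$. Translating the chord of $Q$ over $y$ by the midpoint $c(y):=(a(y)+b(y))/2$ produces a set $P$ such that, for each $y$, the one-dimensional symmetric difference of the $y$-chords of $P$ and $K$ has the same length as that of the $y$-chords of $Q$ and $S_uK$; integrating over $H$ yields $\Delta_v(P,K)=\Delta_v(Q,S_uK)$. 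Were $P$ a polytope with at most $N$ facets we would be finished, since then $f(K)|K|\le\Delta_v(P,K)=\Delta_v(Q,S_uK)=f(S_uK)|K|$.

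The heart of the matter --- and the reason this remains a conjecture --- is that the transport $z\mapsto z+c(\pi_H(z))u$ is not affine, $c$ being the sum of a convex and a concave function, so $P$ need be neither convex nor cut out by few halfspaces; the real work is to replace $P$ by a genuine polytope with at most $N$ facets that fits $K$ no worse. Three routes seem worth trying: (i) a controlled convexification of $P$, though taking convex hulls or maximal inscribed convex subsets can both enlarge $\Delta_v(\cdot,K)$ and violate the facet bound; (ii) first arranging that $Q$ --- or merely a near-optimal approximant of $S_uK$ --- be symmetric about $H$, so that only one half of it must be sheared and the two sheared halves re-glued, which demands structural information about near-optimal approximants that is not presently available; (iii) restricting to a sufficiently rich family of symmetrization directions along which the transport is quantitatively tame. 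The genuine obstacle underlying all of these is that the usual rounding operations ignore the facet constraint: both Steiner and Minkowski symmetrization of an $N$-facet polytope can multiply the number of facets, so any working argument must construct a rounding procedure tailored to the bounded-facet regime.
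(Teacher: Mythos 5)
The statement you address is not proved in the paper at all: it appears there as Conjecture \ref{macbeath1}. What the paper does establish is only the asymptotic version \eqref{conjconj}, obtained by combining Ludwig's limit formula (Theorem \ref{ludlemma}) with the affine isoperimetric inequality (Theorem \ref{affine}), together with the remark that Macbeath proved the analogous fixed-$N$ statement for \emph{inscribed} polytopes with at most $N$ \emph{vertices}. So there is no paper proof to match your argument against; the relevant question is whether your argument closes the conjecture, and it does not.

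Your reduction steps are fine as far as they go: $f$ is affine invariant, it is upper semicontinuous as an infimum of Hausdorff-continuous functionals, and a sequence of Steiner symmetrizations of a volume-normalized $K$ converges to $D_n$, so everything hinges on the one-step monotonicity $f(S_uK)\ge f(K)$. But that step is exactly where your argument stops, as you candidly say: translating the chords of an optimal $Q$ for $S_uK$ by the midpoint function $c(y)=\tfrac{1}{2}(a(y)+b(y))$ preserves the fibrewise symmetric difference and hence $\Delta_v$, yet the resulting set $P$ is in general neither convex nor bounded by at most $N$ halfspaces, because $c$ is only a difference of convex functions and the shear is not affine. None of your three proposed repairs is carried out, and the difficulty is structural: unlike Macbeath's inscribed-vertex setting there is no containment $P\subset K$ to exploit, and Steiner (or Minkowski) symmetrization of an $N$-facet polytope can increase the number of facets, so the usual rounding devices are incompatible with the facet constraint. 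The proposal is therefore an honest strategy sketch with a genuine, acknowledged gap --- consistent with the fact that the paper itself leaves the statement as a conjecture --- and it should not be presented as a proof.
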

Observe that by Theorem \ref{main_thm} there is a polytope with  $ f(\varepsilon,n):=(c\varepsilon)^{-\frac{n-1}{2}}$ facets that gives an $\varepsilon$-approximation of the $ n $-dimensional Euclidean ball, i.e. $ \frac{\Delta_v(P_{n,f(\varepsilon,n)}, D_n)}{|D_n|} \leq \varepsilon.$  Lemma \ref{lowebound} then implies that this result is optimal, up to an absolute constant. If Conjecture \ref{macbeath1} holds, then it follows that all convex bodies can be approximated by polytopes with an exponential number of facets with respect to the symmetric volume difference. 
\begin{remark}
	Macbeath \cite{macbeath1951extremal} showed that if $n\geq 2$ and  $N\geq n+1$, then for  every convex body $ K $ in $ \R^n $  
	\[
		\min_{P \text{ has at most $ N $ vertices, $ P \subset K $}}\frac{\Delta_v(P,K)}{|K|} \leq \min_{P \text{ has at most $ N $ vertices, $ P \subset D_n $}}\frac{\Delta_v(P, D_n)}{|D_n|}.
	\]
\end{remark}
\section{Proofs}
	For the proofs of Theorems \ref{main_thm} and \ref{sec_thm} we may assume that $ N $ is even.
	We also denote by $ \sigma $ the uniform probability measure on $ \mathbb{S}^{n-1},$ and recall that $ N \geq n^n.$
	
\subsection{Proof of Theorem \ref{main_thm}}
First, choose a random $ y\in\ \mathbb{S}^{n-1}$ from the uniform distribution on the sphere, and define the random slab of width $t$ as the set $ \{x\in\R^n:\inner{x}{y}\leq t\} $.   
Then, the probability that a point $ x \in\R^n$ lies outside of a random slab with width $ t \in \left(0,1\right)$ equals
\begin{equation}{\label{alphanr}}
\begin{aligned}\sigma_{y\in\mathbb{S}^{n-1}}\left(\inner{x}{y}\geq t\right) & =\sigma_{y\in\mathbb{S}^{n-1}}\left(\inner{\frac{x}{\norm{x}}}{y}\geq\frac{t}{\norm{x}}\right)\\
& =\frac{|\text{conv}(\vec{0},\{y\in\mathbb{S}^{n-1}:\inner{\frac{x}{\norm{x}}}{y}\geq\frac{t}{\norm{x}}\}\cap\partial D_{n})|}{|D_{n}|}\\
& =\frac{|\text{conv}(\vec{0},\{y\in\mathbb{R}^{^{n}}:\inner{\frac{x}{\norm{x}}}{y}=\frac{t}{\norm{x}}\}\cap D_{n})|}{|D_{n}|}\\
&   +\frac{|\{y\in\mathbb{R}^{n}:\inner{\frac{x}{\norm{x}}}{y}\geq\frac{t}{\norm{x}}\}\cap D_{n}|}{|D_{n}|}\\
& =\frac{2|D_{n-1}|}{|D_{n}|}\left(\int_{\frac{t}{\|x\|_{2}}}^{1}\left(1-x^{2}\right)^{\frac{n-1}{2}}dx+\frac{t}{n\norm{x}}\left(1-\frac{t^{2}}{\|x\|_{2}^{2}}\right)^{\frac{n-1}{2}}\right),
\end{aligned}
\end{equation}
where the  first term is the volume of the spherical cap and the second is the volume of the cone with $ \vec{0} $ as its apex, and both sets have the common base  $ \{y\in\mathbb{R}^{n}:\inneri{\frac{x}{\norm{x}}}{y}=\frac{t}{\norm{x}}\} \cap D_n.$ 

For shorthand, we denote by $ r=\|x\|_2 $ and the probability $ \sigma_{y\in \mathbb{R}^{n-1}}\left( \inner{x}{y} \geq t\right) $ by $\alpha_{n,r,t}.$ 
Let $ P $ be the random polytope that is generated by the intersection of $ \frac N2 $ independent random slabs with the same width $ t $. Observe that with probability one, $ P $ is \textbf{bounded} and has $ N $ facets. 

By independence, the probability that a point $x\in\R^n$ lies inside the random polytope $P$ equals
\[ 
\Pr\left(x\in P\right) = \Pr_{y_1,\ldots,y_{\frac N2} \in \mathbb{S}^{n-1}}\left(\cap_{i=1}^{\frac N2} \inner{x}{y_i} \leq t\right) =\left(1-\alpha_{n,r,t}\right)^{\frac N2}.
\]
Using Fubini and polar coordinates, we express the expectation of the random variable $ |D_n \setminus P| $ as 
\begin{align*}\E[|D_{n}\setminus P |] & =\int_{\otimes_{i=1}^{\frac{N}{2}}\mathbb{S}^{n-1}}\int_{D_{n}}(1-\mathbbm{1}_{\{x\in\cap_{i=1}^{\frac{N}{2}}\inner{x}{y_{i}}\leq t\}})dx\,d\sigma\left(y_{1}\right)\ldots d\sigma(y_{\frac{N}{2}})\\
& =\int_{D_{n}}\int_{\otimes_{i=1}^{\frac{N}{2}}\mathbb{S}^{n-1}}(1-\mathbbm{1}_{\{x\in\cap_{i=1}^{\frac{N}{2}}\inner{x}{y_{i}}\leq t\}})d\sigma\left(y_{1}\right)\ldots d\sigma(y_{\frac{N}{2}})dx\\
& =\int_{D_{n}}\left(1-\alpha_{n,\norm{x},t}\right)^{\frac{N}{2}}dx=|\partial D_{n}|\int_{t}^{1}r^{n-1}\int_{\mathbb{S}^{n-1}}\left(1-\alpha_{n,r,t}\right)^{\frac{N}{2}}d\sigma dr\\
& =|\partial D_{n}|\int_{t}^{1}r^{n-1}\left(1-\alpha_{n,r,t}\right)^{\frac{N}{2}}dr.
\end{align*}
The expectation $ \E[|P \setminus D_n|] $ can be expressed similarly, and thus
\begin{equation}\label{eq:Main_eq}
\begin{aligned}
\E[\Delta_v(D_n,P)] &= \E[|D_n \setminus P|] + \E[|P \setminus D_n|]  \\& ={|\partial D_n|}\left(\int_{t}^{1}r^{n-1}\left(1 - \left(1-\alpha_{n,r,t}\right)^{\frac N2}\right)dr +
\int_{1}^{\infty}r^{n-1}\left(1-\alpha_{n,r,t}\right)^{\frac N2}dr\right).
\end{aligned}
\end{equation}
Now we set $ t=t_{n,N} $ to be  
\[t_{n,N}=\sqrt{1 - \left(\frac{\gamma|\partial D_n|}{N|D_{n-1}|}\right)^{\frac 2{n-1}}}\] where $ \gamma $ is a positive absolute constant that will be determined later.

From now on, we use the notation $ \alpha_{n,r}$ instead of 
$ \alpha_{n,r,t_{n,N}} $.
%
We split the the proof of Theorem \ref{main_thm} into two main lemmas that give upper bounds for the two terms in Eq. \eqref{eq:Main_eq}.
\begin{lemma}{\label{firstpartlem}}
	\be\label{Partone}
	\begin{aligned}
		\E[|D_n \setminus P_{n,N}|] &= |\partial D_n|\int_{t_{n,N}}^{1}r^{n-1}\left(1 - \left(1-\alpha_{n,r}\right)^\frac N2\right)dr\\ 
		&= \left(\int_{0}^{1}t^{-1}(1-e^{-\gamma t})dt+O\left(n^{-0.5}\right)\right)N^{-\frac{2}{n-1}}|D_n|.
	\end{aligned}
	\ee
\end{lemma}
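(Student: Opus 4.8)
\textbf{Proof proposal for Lemma \ref{firstpartlem}.}

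The plan is to analyze the integral $\int_{t_{n,N}}^{1} r^{n-1}\bigl(1-(1-\alpha_{n,r})^{N/2}\bigr)\,dr$ via a substitution that zooms in on the boundary layer $r \approx 1$, where essentially all the mass of $D_n\setminus P$ concentrates, and then match the resulting expression to $\int_0^1 t^{-1}(1-e^{-\gamma t})\,dt$. First I would record the asymptotics of $\alpha_{n,r}=\alpha_{n,r,t_{n,N}}$: from the explicit formula \eqref{alphanr}, for $r$ slightly above $t_{n,N}$ the cap term dominates and $\alpha_{n,r,t}$ behaves like a constant times $(1-t^2/r^2)^{(n+1)/2}$ up to lower-order factors, while by the choice of $t_{n,N}$ one has $(1-t_{n,N}^2)^{(n-1)/2} = \gamma|\partial D_n|/(N|D_{n-1}|)$, so that $\frac N2\alpha_{n,1}$ is of constant order (roughly $\gamma/2$ after bookkeeping with Lemma 2.1). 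The key point is that $(1-\alpha_{n,r})^{N/2} \approx \exp(-\tfrac N2\alpha_{n,r})$, and that $\tfrac N2\alpha_{n,r}$ is a smooth, rapidly varying function of $r$ near $1$ that one should reparametrize.

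The main step: introduce the change of variables $s = \tfrac N2\,\alpha_{n,r}$, or equivalently a variable $u$ with $r^2 = 1 - u\cdot(1-t_{n,N}^2)^{?}$ chosen so that $\tfrac N2\alpha_{n,r}$ becomes (asymptotically) $\gamma$ times a clean power of the new variable. Tracking the Jacobian $r^{n-1}\,dr$ under this substitution, the factor $r^{n-1}$ contributes an $\exp$-type weight because $n$ is large, and the whole integrand should collapse, as $n\to\infty$, to something of the form $\int_0^\infty (1-e^{-\gamma e^{-v}})\,dv$ or, after a further change of variable $t = e^{-v}$, exactly $\int_0^1 t^{-1}(1-e^{-\gamma t})\,dt$; the $N^{-2/(n-1)}|D_n|$ prefactor emerges by pulling out $|\partial D_n|\cdot(1-t_{n,N}^2)^{n/2}$ and using $|\partial D_n| = n|D_n|$ together with $(1-t_{n,N}^2)^{(n-1)/2} = \gamma|\partial D_n|/(N|D_{n-1}|)$ and Lemma 2.1 to convert $|D_{n-1}|/|D_n|$ into a $\sqrt n$ factor; this is where the $O(n^{-0.5})$ error enters. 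One must also check that the contribution of the region where $(1-t_{n,N}^2/r^2)$ is not small — i.e. $r$ bounded away from $1$ — is negligible, which follows because there $\tfrac N2\alpha_{n,r}$ is exponentially large in $n$ so $1-(1-\alpha_{n,r})^{N/2}$ is exponentially close to the threshold behavior and contributes only a negligible share of $r^{n-1}$.

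The error analysis is the part I expect to be most delicate. Three sources of error must each be shown to be $O(n^{-1/2})$ relative to the main term: (i) the replacement $(1-\alpha_{n,r})^{N/2} = e^{-\frac N2\alpha_{n,r}}(1+O(\cdot))$, which needs a bound on $\frac N2\alpha_{n,r}^2$ uniformly on the relevant range of $r$; (ii) the approximation of $\frac N2\alpha_{n,r}$ by the idealized power law, which requires expanding the integral and the cone term in \eqref{alphanr} and controlling $\int_{t/r}^1(1-x^2)^{(n-1)/2}dx$ by Laplace's method with explicit remainder; and (iii) the passage from the finite-$n$ Jacobian to its limit, i.e. showing $r^{n-1}$ under the substitution converges to the limiting exponential weight with controlled rate. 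I would organize these as three short sublemmas, handling the dangerous region $r$ near $t_{n,N}$ (where $\alpha_{n,r}\to 0$ and one needs the $1-e^{-\gamma t}\sim \gamma t$ cancellation to keep the integral convergent at the lower endpoint) separately from the bulk. The constant $\gamma$ will finally be fixed in a later step (to make the two lemmas' contributions combine into $I+II$), so here I would simply carry $\gamma$ as a parameter and only use that it is an absolute constant, which keeps all the $O(n^{-0.5})$ bounds uniform.
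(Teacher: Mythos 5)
Your plan --- zoom in on $r\approx 1$ by a change of variables, collapse the integrand to $\int_0^1 t^{-1}(1-e^{-\gamma t})\,dt$, and show the tail is negligible --- is exactly the paper's strategy: it splits the integral at $1-\delta$ with $\delta=(n-1)^{-1/2}N^{-2/(n-1)}$, uses the explicit asymptotic from Lemma~\ref{main_lemma} on $[1-\delta,1]$, and bounds the contribution from $[t_{n,N},1-\delta]$ by $o(n^{-1/2})N^{-2/(n-1)}|D_n|$.

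There is, however, a factual error in your sketch that would derail the constant. You assert that near $r=1$ the \emph{cap} term in \eqref{alphanr} dominates and so $\alpha_{n,r}\sim c\,(1-t_{n,N}^2/r^2)^{(n+1)/2}$. It is the opposite: for $N\ge n^n$ one has $1-t_{n,N}^2=O(n^{-2})$, so on $[1-\delta,1+\delta]$ the cap is \emph{smaller} than the cone by a factor $O(n^{-2})$ --- this is precisely Lemma~\ref{sub_main_lemma} --- and the governing exponent is $(n-1)/2$, not $(n+1)/2$. This is what makes the normalization $(1-t_{n,N}^2)^{(n-1)/2}=\gamma|\partial D_n|/(N|D_{n-1}|)$ produce $\tfrac N2\alpha_{n,1}=\gamma(1+O(n^{-1}))$ (not $\gamma/2$ as you write), so that $(1-\alpha_{n,1})^{N/2}\approx e^{-\gamma}$ matches the $t=1$ endpoint of $\int_0^1 t^{-1}(1-e^{-\gamma t})\,dt$. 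Carried through with the cap's exponent $(n+1)/2$, your computation would pick up a spurious factor $1-t_{n,N}^2=O(n^{-2})$ and $\tfrac N2\alpha_{n,1}$ would be $o(1)$ rather than of constant order, breaking the matching. A secondary misattribution: the $O(n^{-1/2})$ error does not come from converting $|D_{n-1}|/|D_n|$ into $\sqrt n$ (raised to the power $2/(n-1)$, that ratio only contributes a multiplicative $1+O((\log n)/n)$); it comes from the exponent error $(1+O(n^{-0.5}))$ in Lemma~\ref{main_lemma}, which is of size $\sqrt{n-1}\cdot O((\log n)/n)$ over the critical window $|r-1|\le\delta$, together with the cutoff of the transformed integral at height $n^{1/2}$.
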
\begin{lemma}{\label{lemma_2}}
\begin{equation}\label{Parttwo}
\E[|P \setminus D_n|] = |\partial D_n|\int_{1}^{\infty}r^{n-1}\left(1-\alpha_{n,r}\right)^{\frac N2}dr =\left(\int_{0}^{\infty}e^{-\gamma e^{t}}dt+O\left(n^{-0.5}\right)\right)N^{-\frac{2}{n-1}}|D_n|.
\end{equation}
\end{lemma}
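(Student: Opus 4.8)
\textbf{Proof proposal for Lemma \ref{lemma_2}.}

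The plan is to estimate $\E[|P\setminus D_n|] = |\partial D_n|\int_1^\infty r^{n-1}(1-\alpha_{n,r})^{N/2}\,dr$ by understanding the behaviour of $\alpha_{n,r}$ for $r>1$ and the chosen width $t=t_{n,N}$. First I would simplify the formula \eqref{alphanr} for $\alpha_{n,r,t}$: for $r=\|x\|_2$ close to $1$ and $t$ very close to $1$ (which is the relevant regime since $t_{n,N}=\sqrt{1-(\gamma|\partial D_n|/(N|D_{n-1}|))^{2/(n-1)}}$ is $1-o(1)$), the dominant contribution to $\alpha_{n,r}$ comes from the cap/cone integral, and one gets an asymptotic of the shape $\alpha_{n,r}\approx \frac{2|D_{n-1}|}{|D_n|}\cdot\frac{1}{n}\cdot(1-t^2/r^2)^{(n-1)/2}$ up to lower-order multiplicative corrections; using Lemma \ref{isoperemetric}'s companion (the first unnamed lemma, $|D_n|/|D_{n-1}|\asymp n^{-1/2}$) this is $\asymp n^{-1/2}(1-t^2/r^2)^{(n-1)/2}$. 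The key substitution is then to change variables so that the exponent $(1-\alpha_{n,r})^{N/2}\approx \exp(-\tfrac N2 \alpha_{n,r})$ becomes, after plugging in $t_{n,N}$, of the form $\exp(-\gamma\, r^{-(n-1)}\cdot(\text{something}))$; more precisely one wants to show $\tfrac N2\alpha_{n,r}$ is comparable to $\gamma$ times a power of $r$, so that writing $r=\exp(s/(n-1))$ (or a similar scaling tied to the $n-1$ exponent) turns $\int_1^\infty r^{n-1}(1-\alpha_{n,r})^{N/2}dr$ into $\bigl(\int_0^\infty e^{-\gamma e^t}dt + O(n^{-1/2})\bigr)N^{-2/(n-1)}$ after accounting for the Jacobian and the factor $|\partial D_n|=n|D_n|$.

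The main steps, in order, are: (1) derive a two-sided asymptotic $\alpha_{n,r} = (1+O(n^{-1/2}))\,\frac{2|D_{n-1}|}{n|D_n|}\,(1-t_{n,N}^2/r^2)^{(n-1)/2}$ valid uniformly for $r$ in the range that matters (roughly $1\le r \le 1+O(\log n/n)$, since beyond that the integrand is super-exponentially small), carefully bounding the cap integral $\int_{t/r}^1(1-x^2)^{(n-1)/2}dx$ against the cone term $\frac{t}{nr}(1-t^2/r^2)^{(n-1)/2}$ and showing the cone term dominates when $t/r$ is close to $1$; (2) substitute the definition of $t_{n,N}$ to get $1-t_{n,N}^2/r^2 = \frac{1}{r^2}\bigl(r^2-1 + (\gamma|\partial D_n|/(N|D_{n-1}|))^{2/(n-1)}\bigr)$, and observe that for $r^2-1$ of order at most $N^{-2/(n-1)}\cdot(\text{poly})$ the second term is comparable, while for larger $r$ the first term dominates and kills the integrand; (3) change variables $r^2 = 1 + (\gamma|\partial D_n|/(N|D_{n-1}|))^{2/(n-1)}(e^{2u}-1)$ or the cleaner $r = e^{v/(n-1)}$, so that $(1-t_{n,N}^2/r^2)^{(n-1)/2}$ becomes $\approx \frac{|\partial D_n|}{N|D_{n-1}|}\cdot\frac{\gamma^{?}}{\gamma}\cdot e^{v}$ (tracking the exact power of $\gamma$), making $\tfrac N2\alpha_{n,r}\approx \gamma e^{v}$; (4) compute the Jacobian $r^{n-1}dr$ under this change of variables — $r^{n-1}dr = e^{v}\cdot\frac{1}{n-1}e^{2v/(n-1)}dv \approx \frac{e^v}{n-1}dv$ — wait, this needs care: actually $r^{n-1}\,dr$ with $r=e^{v/(n-1)}$ gives $e^v\cdot\frac{1}{n-1}e^{v/(n-1)}dv$, and combined with the prefactor $|\partial D_n| = n|D_n|$ and the $|D_{n-1}|/|D_n|$ ratios, everything should collapse to $N^{-2/(n-1)}|D_n|\int_0^\infty e^{-\gamma e^v}dv(1+O(n^{-1/2}))$; (5) bound the tail $r\ge 1+c\log n/n$ (or wherever the control from step (1) breaks) by the isoperimetric/volume estimates, showing it contributes only $O(N^{-2/(n-1)}|D_n| n^{-1/2})$ or smaller, and similarly bound the error from replacing $(1-\alpha)^{N/2}$ by $e^{-(N/2)\alpha}$.

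The hard part will be step (1) together with the uniformity in step (3): pinning down $\alpha_{n,r}$ to a multiplicative $(1+O(n^{-1/2}))$ accuracy \emph{uniformly} over the whole range of $r$ that contributes to the integral, including the transition region where $r^2-1$ and $(\gamma|\partial D_n|/(N|D_{n-1}|))^{2/(n-1)}$ are comparable. In that region both the exact value of the cap integral and the precise exponent matter, and one has to be careful that the $n$-dependent constants ($|D_{n-1}|/|D_n|$, the $1/n$ from the cone, the $(n-1)/2$ in the exponent versus the $N/2$ power) combine exactly so that the $\gamma$-dependence of the final answer is the clean $\int_0^\infty e^{-\gamma e^t}dt$ and not some distorted version; getting the change of variables to land precisely on $e^t$ (rather than $e^t$ times a slowly varying factor) is the crux, and is presumably why the statement carries the $O(n^{-0.5})$ error term rather than $o(1)$. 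I also expect that one must use the specific choice of $t_{n,N}$ in an essential way — it is engineered exactly so that $\frac N2 \alpha_{n,1}\approx\gamma$ — so verifying that $r=1$ corresponds to $v=0$ in the substitution, i.e.\ that the lower limit of the transformed integral is genuinely $0$, will require plugging the definition of $t_{n,N}$ back in and simplifying the ratio $|\partial D_n|/(n|D_{n-1}|) = |D_n|/|D_{n-1}|$ correctly.
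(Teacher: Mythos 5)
Your proposal follows the same overall strategy as the paper's proof: (1) show that the cone term dominates the cap integral in $\alpha_{n,r}$ when $r$ and $t_{n,N}$ are close to $1$ (the paper's Lemma 6.1), (2) extract the asymptotic $\alpha_{n,r}\approx \tfrac{2\gamma}{N}e^{(n-1)(r-1)N^{2/(n-1)}}$ in a narrow window around $r=1$ (the paper's Lemma 3.5, proved in Section 6), (3) change variables so $\tfrac N2\alpha_{n,r}$ becomes $\gamma e^t$ and the measure $|\partial D_n|r^{n-1}dr$ becomes $|D_n|N^{-2/(n-1)}dt$, and (4) discard the tail, which the paper does by splitting $[1+\delta,\infty)$ into four sub-intervals with the far tail $r\ge n^2$ handled via the lower bound $\alpha_{n,r}\ge 1-C\sqrt n/r$ (Lemma 3.8). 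The only substantive discrepancy is minor: your stated dominant window $r\le 1+O(\log n/n)$ is larger than the paper's $r\le 1+N^{-2/(n-1)}(n-1)^{-1/2}$, so the paper's Lemma 6.1 (which needs $1-t_{n,N}/r=O(n^{-2})$) would not apply directly there, and your bare substitution $r=e^{v/(n-1)}$ omits the necessary $N^{2/(n-1)}$ rescaling — but you flag both of these yourself, and your alternative substitution $r^2=1+(\gamma|\partial D_n|/(N|D_{n-1}|))^{2/(n-1)}(e^{2u}-1)$ supplies the missing scaling correctly, so the approach is sound and essentially that of the paper.
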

First we show that Theorem \ref{main_thm} follows from the two aforementioned lemmas, and then we prove them.

\paragraph{Proof of Theorem \ref{main_thm}}
 Lemmas \ref{firstpartlem} and \ref{lemma_2} give the  upper bound 
\begin{equation}\label{symbound}
\E[\Delta_v(P,D_n)] =  \left(\int_{0}^{1} t^{-1}(1-e^{-\gamma t})dt + \int_{0}^{\infty}e^{-\gamma e^{t}}dt + O(n^{-0.5})\right)\NN|D_n|.
\end{equation}
Now we optimize over $ \gamma \in (0,\infty) $ and derive that the minimum is achieved at $ \gamma = \ln(2).$ This follows from the fact that
\begin{equation*}
\frac{\partial}{\partial\gamma}\left(\int_{0}^{1} t^{-1}(1-e^{-\gamma t})dt + \int_{0}^{\infty}e^{-\gamma e^{t}}dt + O(n^{-0.5})\right)=
\frac{1}{\gamma}(1-2e^{-\gamma}).
\end{equation*}
The main part of the theorem follows from the fact that there is polytope $ P_{n,N} $, a realization of $ P $, whose symmetric volume difference is no more than $\E[\Delta_v(P,D_n)] $. Finally, we give an upper bound for $ \text{ldiv}_{n-1} $. Observe that by Lemma \ref{ludlemma} and  Eq. \eqref{symbound},
\begin{align*} & \left(\int_{0}^{1}t^{-1}(1-e^{-\ln(2)t})dt+\int_{0}^{\infty}e^{-\ln(2)e^{t}}dt+O(n^{-0.5})\right)|D_{n}|\\
& \geq\frac{1}{2}\textrm{ldiv}_{n-1}\left(|\partial D_{n}|\right)^{\frac{n+1}{n-1}}=\frac{1}{2}(1+o(1))\textrm{ldiv}_{n-1}\frac{2\pi e}{n}|\partial D_{n}|\\
& =(1+o(1))\textrm{ldiv}_{n-1}\pi e|D_{n}|,
\end{align*}
and hence
\[
\text{ldiv}_{n-1} \leq (\pi e)^{-1}\left(\int_{0}^{1} t^{-1}(1-e^{-\ln(2)t})dt + \int_{0}^{\infty}e^{-\ln(2)e^{t}}dt+o(1)\right).
\]

\qed
\\

Now we turn our attention to the proofs of the main lemmas. We denote by $\delta = \left(n-1\right)^{-0.5}N^{-\frac{2}{n-1}}$, and we  use the following lemma which is proven in Section \ref{Techandloose}.
\begin{lemma}{\label{main_lemma}}
Let $ r \in [1-\frac{\NN}{\sqrt{n-1}}, 1+\frac{\NN}{\sqrt{n-1}}]$. Then
\begin{equation}
\alpha_{n,r}=\frac{2\gamma\left(1+O\left(n^{-1}\right)\right)}{N}e^{\left(n-1\right)\left(r-1\right)N^{\frac{2}{n-1}}\left(1+O\left(n^{-0.5}\right)\right)}.
\end{equation}
\end{lemma}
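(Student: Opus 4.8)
The plan is to start from the exact expression for $\alpha_{n,r}$ given in Eq.~\eqref{alphanr}, namely
\[
\alpha_{n,r}=\frac{2|D_{n-1}|}{|D_n|}\left(\int_{t/r}^{1}\left(1-x^2\right)^{\frac{n-1}{2}}dx+\frac{t}{nr}\left(1-\frac{t^2}{r^2}\right)^{\frac{n-1}{2}}\right),
\]
with $t=t_{n,N}$, and to track the two factors separately. For the prefactor, Lemma~1.1 gives $|D_{n-1}|/|D_n|=\Theta(\sqrt n)$, and more precisely this ratio is $\sqrt{n/(2\pi)}\,(1+O(n^{-1}))$; combined with the definition of $t_{n,N}$ this should let me rewrite $2|D_{n-1}|/|D_n|$ times the bracket in a form where the leading behaviour is $\gamma/N$ up to multiplicative $1+O(n^{-1})$. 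The key reduction is that $1-t_{n,N}^2=\left(\gamma|\partial D_n|/(N|D_{n-1}|)\right)^{2/(n-1)}$, which is exactly the quantity that makes the cap volume of a slab of width $t_{n,N}$ at radius $1$ equal to (a constant multiple of) $\gamma/N$.

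First I would estimate the integral $\int_{t/r}^{1}(1-x^2)^{\frac{n-1}{2}}dx$. Writing $x=1-u$ and using $(1-x^2)^{\frac{n-1}{2}}=(2u)^{\frac{n-1}{2}}(1-u/2)^{\frac{n-1}{2}}$, the integral concentrates near the lower endpoint, so a Laplace-type expansion gives
\[
\int_{t/r}^{1}(1-x^2)^{\frac{n-1}{2}}dx=\frac{1}{n-1}\left(1-\frac{t^2}{r^2}\right)^{\frac{n+1}{2}}\frac{r^2}{t}\left(1+O(n^{-1})\right)+\text{(lower order)},
\]
after an integration by parts; the cone term $\frac{t}{nr}(1-t^2/r^2)^{\frac{n-1}{2}}$ is of the same order and combines cleanly. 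Thus $\alpha_{n,r}$ is, up to $1+O(n^{-1})$, a constant times $(1-t^2/r^2)^{\frac{n-1}{2}}$ divided by $n$ times the prefactor $\sqrt n$, i.e.\ a constant times $(1-t^2/r^2)^{\frac{n-1}{2}}/\sqrt n$; I would then check that at $r=1$ this equals $\gamma/N\cdot(1+O(n^{-1}))$ by the choice of $t_{n,N}$ (this fixes the absolute constant implicit in $\gamma$, or rather shows the definition of $t_{n,N}$ was calibrated precisely so that the "$2$" in $2\gamma/N$ appears — one has $\alpha_{n,1}\approx 2\gamma/N$ because the half-slab is being counted).

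The remaining and genuinely delicate step is the exponential factor: I need to show
\[
\left(\frac{1-t^2/r^2}{1-t^2}\right)^{\frac{n-1}{2}}=e^{(n-1)(r-1)N^{2/(n-1)}(1+O(n^{-0.5}))}
\]
uniformly for $r\in[1-\delta\sqrt{n-1},\,1+\delta\sqrt{n-1}]=[1-\NN,1+\NN]$. Here $1-t^2=(\gamma|\partial D_n|/(N|D_{n-1}|))^{2/(n-1)}=\Theta(\NN)$, so $t^2=1-\Theta(\NN)$ is very close to $1$. Writing $1-t^2/r^2=(1-t^2)+t^2(1-1/r^2)$ and $1-1/r^2=2(r-1)+O((r-1)^2)$, the ratio inside the power is $1+\frac{t^2\cdot 2(r-1)(1+O(r-1))}{1-t^2}=1+\frac{2(r-1)}{1-t^2}(1+O(\NN))$ since $t^2=1+O(\NN)$. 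Because $|r-1|\le\NN$ and $1-t^2=\Theta(\NN)$, the quantity $\frac{2(r-1)}{1-t^2}$ is $O(1)$, so $\log(1+\cdot)=\frac{2(r-1)}{1-t^2}+O\!\left(\big(\tfrac{r-1}{1-t^2}\big)^2\right)=\frac{2(r-1)}{1-t^2}(1+O(\NN))$; multiplying by $\frac{n-1}{2}$ gives $(n-1)(r-1)\cdot\frac{1}{1-t^2}(1+O(\NN))$, and finally $\frac{1}{1-t^2}=N^{2/(n-1)}(1+O(\NN))$ by Lemma~1.1 applied to $|\partial D_n|/|D_{n-1}|=n|D_n|/|D_{n-1}|=\Theta(\sqrt n)$, whose $(2/(n-1))$-th power is $1+O(\log n/(n-1))=1+O(n^{-0.5})$. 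Collecting the error terms, the multiplicative error in the exponent is $1+O(n^{-0.5})$ as claimed. The main obstacle is bookkeeping: controlling, uniformly over the whole window of $r$, that every $O(\cdot)$ — the Laplace expansion of the cap integral, the expansion of $1-1/r^2$, the $\log(1+\cdot)$ expansion, and the $(2/(n-1))$-th powers of $\Theta(\sqrt n)$ quantities — collapses into the single advertised $1+O(n^{-0.5})$ factor and does not secretly depend on $N$ in a way that is not absorbed; once the scales $1-t^2\asymp\NN$ and $|r-1|\le\NN$ are pinned down, each individual estimate is routine.
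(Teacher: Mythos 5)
Your overall route is the paper's: isolate the cone term in the exact formula for $\alpha_{n,r}$, substitute $1-t_{n,N}^2 = \bigl(\gamma|\partial D_n|/(N|D_{n-1}|)\bigr)^{2/(n-1)}$ to pull out the factor $2\gamma/N$, and then expand $\bigl(1+\tfrac{2(r-1)+(r-1)^2}{1-t_{n,N}^2}\bigr)^{(n-1)/2}$ into an exponential. (One small difference: you estimate the cap integral by integration by parts and say it is of "the same order" as the cone term and "combines cleanly"; the paper instead proves in Lemma~\ref{sub_main_lemma} that for $N\ge n^n$ the cap is $O(n^{-2})$ times the cone, so only the cone survives up to $1+O(n^{-1})$. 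Your phrasing is loose, but your conclusion is the same.)

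However, there is a genuine gap in the exponentiation step. You claim uniformity over $r\in[1-\delta\sqrt{n-1},\,1+\delta\sqrt{n-1}]=[1-\NN,\,1+\NN]$, but the lemma's interval is $[1-\delta,\,1+\delta]=[1-\NN/\sqrt{n-1},\,1+\NN/\sqrt{n-1}]$, which is narrower by a factor $\sqrt{n-1}$. On your wider window one has $|r-1|\le\NN$ and $1-t_{n,N}^2=\Theta(\NN)$, so $z:=\tfrac{2(r-1)}{1-t_{n,N}^2}$ is genuinely $\Theta(1)$ at the endpoints, and $\log(1+z)=z(1+O(z))$ then has relative error $O(1)$, not $o(1)$: the passage from $(1+z)^{(n-1)/2}$ to $e^{(n-1)z/2(1+o(1))}$ simply fails, and indeed the lemma is false on $[1-\NN,1+\NN]$ (at $r=1+\NN$ the ratio of the two sides grows like $e^{cn}$). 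On the correct interval one has $z=O\bigl((n-1)^{-1/2}\bigr)$, and then $\log(1+z)=z\bigl(1+O(n^{-0.5})\bigr)$ — which, contrary to what you wrote, is the source of the $O(n^{-0.5})$ in the statement. Your line $\log(1+\cdot)=\tfrac{2(r-1)}{1-t^2}\bigl(1+O(\NN)\bigr)$ is therefore incorrect under either reading of the interval: on the wide interval the relative error is $O(1)$, and on the correct interval it is $O(n^{-0.5})$, not $O(\NN)=O(n^{-2})$. To repair the argument, replace $[1-\NN,1+\NN]$ by $[1-\NN/\sqrt{n-1},\,1+\NN/\sqrt{n-1}]$, note that $z=O(n^{-0.5})$ there, and obtain the advertised $1+O(n^{-0.5})$ directly from the Taylor remainder of $\log(1+z)$; the remaining error sources you list ($(2/(n-1))$-th powers of $\Theta(\sqrt n)$ quantities, the $t^2\approx1$ and $1-1/r^2\approx 2(r-1)$ steps) are all $O(\log n/n)$ or smaller and get absorbed.
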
 
\subsection{Proof of Lemma \ref{lemma_2}}
Let us split Eq. \eqref{Parttwo} into five parts:
\begin{equation}
\begin{aligned}|\partial D_{n}| & \bigg[\int_{1}^{1+\delta}r^{n-1}\left(1-\alpha_{n,r}\right)^{\frac{N}{2}}dr+\int_{1+\delta}^{1+2\NN}r^{n-1}\left(1-\alpha_{n,r}\right)^{\frac{N}{2}}dr\\
& \quad+\int_{1+2\NN}^{1+\frac{2}{n}}r^{n-1}\left(1-\alpha_{n,r}\right)^{\frac{N}{2}}dr+\int_{1+\frac{2}{n}}^{n^{2}}r^{n-1}\left(1-\alpha_{n,r}\right)^{\frac{N}{2}}dr\\
& \quad+\int_{n^{2}}^{\infty}r^{n-1}\left(1-\alpha_{n,r}\right)^{\frac{N}{2}}dr\bigg]
\end{aligned}
\end{equation}
Next, we estimate these integrals in a series of lemmas.
\begin{lemma}
\[
\int_{1}^{1+\delta}r^{n-1}\left(1-\alpha_{n,r}\right)^{\frac {N}{2}}dr = \left(\int_{0}^{\infty}e^{-\ln(2)e^{t}}dt+O\left(n^{-0.5}\right)\right)\NN|D_{n}|.
\]
\end{lemma}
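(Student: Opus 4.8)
The goal is to evaluate $\int_{1}^{1+\delta}r^{n-1}(1-\alpha_{n,r})^{N/2}\,dr$ where $\delta=(n-1)^{-0.5}N^{-2/(n-1)}$, and to show it equals $\big(\int_0^\infty e^{-\ln(2)e^t}\,dt+O(n^{-0.5})\big)\NN|D_n|$. My plan is to perform the substitution $r=1+\frac{\NN}{n-1}s$, so that as $r$ ranges over $[1,1+\delta]$ the new variable $s$ ranges over $[0,\sqrt{n-1}\,]$; note that on this whole range $r$ lies in the interval $[1-\NN/\sqrt{n-1},\,1+\NN/\sqrt{n-1}]$, so Lemma \ref{main_lemma} applies. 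Under this substitution $dr = \frac{\NN}{n-1}\,ds$, and $(n-1)(r-1)N^{2/(n-1)}=s$, so Lemma \ref{main_lemma} gives $\alpha_{n,r}=\frac{2\gamma(1+O(n^{-1}))}{N}e^{s(1+O(n^{-0.5}))}$. Recalling that $\gamma=\ln 2$ has been fixed by the optimization, the integrand becomes $r^{n-1}\big(1-\tfrac{2\ln 2(1+O(n^{-1}))}{N}e^{s(1+O(n^{-0.5}))}\big)^{N/2}$.

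Next I would handle the three factors in turn. For the $r^{n-1}$ factor: since $1\le r\le 1+\delta$ and $\delta=o(1/(n-1))$ in the relevant sense, we have $r^{n-1}=e^{(n-1)\ln r}=e^{(n-1)(r-1)+O((n-1)(r-1)^2)}$; with $(n-1)(r-1)=\NN s$ and $s\le\sqrt{n-1}$ this exponent is $O(\NN\sqrt{n-1})=o(1)$, so $r^{n-1}=1+o(1)$ uniformly. For the power: $(1-\beta/N)^{N/2}=e^{-\beta/2+O(\beta^2/N)}$, and here $\beta=2\ln 2\,(1+O(n^{-1}))e^{s(1+O(n^{-0.5}))}$. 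On the range $s\in[0,\sqrt{n-1}\,]$ the term $e^{s}$ can be as large as $e^{\sqrt{n-1}}$, which is superpolynomial, so I must be careful: the integrand decays doubly exponentially in $s$, so the tail $s\gtrsim \log\log N$ contributes negligibly and there $\beta^2/N$ is still small relative to the doubly-exponential decay. More precisely, I would split the $s$-integral at some slowly growing cutoff $s_0$ (say $s_0=\log\log\log n$ or even a fixed large constant times something), bound the tail $[s_0,\sqrt{n-1}]$ crudely using $(1-\beta/N)^{N/2}\le e^{-\beta/2}\le e^{-\ln 2\, e^{s_0}/2}$ times the length of the interval times $r^{n-1}=1+o(1)$, which is $O(n^{-0.5})$ after multiplying by the prefactor structure, and on $[0,s_0]$ use the expansions above to get the integrand is $e^{-\ln 2\,e^s}(1+o(1))$ uniformly.

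Putting this together, the integral equals $\frac{\NN}{n-1}\cdot|\partial D_n|\cdot\big(\int_0^{\infty}e^{-\ln 2\,e^s}\,ds+O(n^{-0.5})\big)$, where I have extended the upper limit from $\sqrt{n-1}$ back to $\infty$ at the cost of another doubly-exponentially small error. Finally I would invoke Lemma (the first auxiliary lemma) that $|\partial D_n|/(n-1)=\frac{n}{n-1}|D_{n-1}|$ — more directly, $|\partial D_n|=n|D_n|$ and $|D_n|/|D_{n-1}|\asymp n^{-1/2}$ — to rewrite $\frac{|\partial D_n|}{n-1}$ in terms of $|D_n|$: indeed $\frac{|\partial D_n|}{n-1}=\frac{n}{n-1}|D_n|=(1+O(n^{-1}))|D_n|$. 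This converts the expression into $\big(\int_0^\infty e^{-\ln 2\,e^t}\,dt+O(n^{-0.5})\big)\NN|D_n|$, as claimed.

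The main obstacle is controlling the error terms uniformly over the unbounded-in-$n$ range $s\in[0,\sqrt{n-1}]$: the factor $e^{s(1+O(n^{-0.5}))}$ inside $\beta$ is delicate because the $O(n^{-0.5})$ relative error in the exponent gets amplified to a multiplicative factor $e^{O(s/\sqrt n)}$ on $\beta$, which is $1+o(1)$ only for $s=o(\sqrt n)$ — fine up to the cutoff — and one must check that beyond the cutoff the doubly-exponential smallness of $e^{-\beta/2}$ dominates any such amplification and also dominates the $O(\beta^2/N)$ correction from the $(1-\beta/N)^{N/2}$ expansion. I expect that a clean way to organize this is: (i) a crude global bound $\alpha_{n,r}\ge \frac{\ln 2}{N}e^{s/2}$ valid for large $n$ giving $(1-\alpha_{n,r})^{N/2}\le e^{-\frac{\ln 2}{4}e^{s/2}}$ for the tail, and (ii) the precise asymptotic only on a bounded or slowly-growing initial segment, where everything is routine Taylor expansion. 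The dominated-convergence-type conclusion then yields the stated $O(n^{-0.5})$ error.
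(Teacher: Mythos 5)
Your proposal is correct and follows essentially the same route as the paper: apply Lemma \ref{main_lemma} on $[1,1+\delta]$, pass from $(1-\alpha_{n,r})^{N/2}$ to the double-exponential $e^{-\gamma e^{t}}$ via the substitution $t=(n-1)(r-1)N^{\frac{2}{n-1}}$, extend the upper limit from $\sqrt{n-1}$ to $\infty$ at negligible cost, and convert $|\partial D_n|\cdot\frac{\NN}{n-1}$ into $(1+O(n^{-1}))\NN|D_n|$ using $|\partial D_n|=n|D_n|$. Your explicit splitting at a cutoff $s_0$ to control the amplified $O(n^{-0.5})$ exponent error and the $O(\beta^2/N)$ correction is just a more careful spelling-out of the step the paper compresses into the single equality with the $(1+O(n^{-0.5}))$ factor.
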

\begin{proof}
By Lemma \ref{main_lemma}, if $r\in [1,1+\delta]$ then
\[
\alpha_{n,r}=\frac{2\gamma\left(1+O\left(n^{-1}\right)\right)}{N}e^{\left(n-1\right)\left(r-1\right)N^{\frac{2}{n-1}}\left(1+O\left(n^{-0.5}\right)\right)}.
\]
Hence,
\begin{equation}
\begin{aligned} & |\partial D_{n}|\int_{1}^{1+\delta}r^{n-1}\left(1-\alpha_{n,r}\right)^{\frac{N}{2}}dr=\\
& |\partial D_{n}|\int_{1}^{1+\delta}r^{n-1}\left(1-\frac{2\gamma\left(1+O\left(n^{-1}\right)\right)}{N}e^{\left(n-1\right)\left(r-1\right)N^{\frac{2}{n-1}}\left(1+O\left(n^{-0.5}\right)\right)}\right)^{\frac{N}{2}}dr=\\
& (1+O(n^{-1}))|\partial D_{n}|\int_{1}^{1+\delta}e^{-\left(1+O\left(n^{-1}\right)\right)\gamma e^{\left(n-1\right)\left(r-1\right)N^{\frac{2}{n-1}}\left(1+O\left(n^{-0.5}\right)\right)}}dr=\\
& \left(1+O\left(n^{-0.5}\right)\right)|D_{n}|\NN\int_{0}^{n^{0.5}}e^{-\gamma e^{t}}dt=\\
& \left(1+O\left(n^{-0.5}\right)\right)|D_{n}|\NN\int_{0}^{\infty}e^{-\gamma e^{t}}dt.
\end{aligned}
\end{equation}
\end{proof}
\begin{lemma}
\[
|\partial D_{n}|\int_{1+\delta}^{1+2\NN}r^{n-1}\left(1-\alpha_{n,r}\right)^{\frac{N}{2}}dr = |D_{n}|\NN o\left(n^{-0.5}\right).
\]
\end{lemma}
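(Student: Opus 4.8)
The plan is to show that on the entire interval $[1+\delta,1+2\NN]$ the integrand is already doubly-exponentially small in $n$, so that the integral, even after multiplication by $|\partial D_n|=n|D_n|$, is negligible against $|D_n|\NN\,n^{-0.5}$. The first ingredient is the monotonicity of $r\mapsto\alpha_{n,r}$: for the fixed width $t_{n,N}\in(0,1)$ we have $\alpha_{n,r}=\sigma_{y\in\mathbb{S}^{n-1}}\bigl(\langle x/\norm{x},y\rangle\ge t_{n,N}/r\bigr)$ with $r=\norm{x}$, and as $r$ increases the spherical cap on the right-hand side grows, so $\alpha_{n,r}$ is non-decreasing in $r$; hence $\alpha_{n,r}\ge\alpha_{n,1+\delta}$ for every $r\ge1+\delta$. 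This monotonicity is what lets us go beyond Lemma \ref{main_lemma}, which is valid only for $r\in[1-\delta,1+\delta]$ and so applies on our interval only at its left endpoint.

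Next I would apply Lemma \ref{main_lemma} at $r=1+\delta$. Since $\delta=\NN/\sqrt{n-1}$, the exponent there equals $(n-1)\delta N^{\frac{2}{n-1}}=\sqrt{n-1}$, so
\[
\alpha_{n,1+\delta}=\frac{2\gamma\bigl(1+O(n^{-1})\bigr)}{N}\,e^{\sqrt{n-1}\,(1+O(n^{-0.5}))}\ \ge\ \frac{c}{N}\,e^{\sqrt{n-1}}
\]
for an absolute constant $c>0$ and all large $n$; this is legitimate because $N\ge n^n$ forces $e^{\sqrt{n-1}}\ll N$, so in particular $\alpha_{n,1+\delta}=o(1)$. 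Combining with the monotonicity, for every $r\in[1+\delta,1+2\NN]$ we get
\[
\bigl(1-\alpha_{n,r}\bigr)^{\frac N2}\ \le\ e^{-\frac N2\alpha_{n,r}}\ \le\ e^{-\frac c2\,e^{\sqrt{n-1}}}.
\]

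Finally I would bound the remaining factors crudely: on this interval $r^{n-1}\le(1+2\NN)^{n-1}\le e^{2(n-1)\NN}\le e^{2/n}=O(1)$, again using $N\ge n^n$, and the interval has length at most $2\NN$. Hence
\[
|\partial D_n|\int_{1+\delta}^{1+2\NN}r^{n-1}\bigl(1-\alpha_{n,r}\bigr)^{\frac N2}dr\ \le\ C\,n|D_n|\NN\,e^{-\frac c2\,e^{\sqrt{n-1}}}\ =\ |D_n|\NN\cdot\bigl(C\,n\,e^{-\frac c2\,e^{\sqrt{n-1}}}\bigr),
\]
and since $C\,n\,e^{-\frac c2\,e^{\sqrt{n-1}}}$ tends to $0$ faster than any power of $n$, it is in particular $o(n^{-0.5})$, which is the assertion. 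I do not expect a genuine obstacle here: the one point needing care is the use of monotonicity to transport the lower bound on $\alpha_{n,r}$ from the single point $r=1+\delta$ to the whole integration range; after that, the doubly-exponential decay $e^{-c\,e^{\sqrt{n-1}}}$ absorbs every polynomial loss, including the factor $n$ coming from $|\partial D_n|=n|D_n|$.
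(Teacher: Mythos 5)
Your proposal is correct and matches the paper's argument essentially verbatim: both exploit the monotonicity of $\alpha_{n,r}$ in $r$ to transport the lower bound from $r=1+\delta$ (where Lemma \ref{main_lemma} gives $\alpha_{n,1+\delta}\sim\frac{2\gamma}{N}e^{\sqrt{n-1}}$) across the whole interval, and then the resulting doubly-exponential factor $e^{-c\,e^{\sqrt{n-1}}}$ crushes the polynomial contributions from $|\partial D_n|$, $r^{n-1}$, and the interval length $O(\NN)$.
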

\begin{proof}
	Since  $ 1-\alpha_{n,r} $ is a decreasing function of $ r,$  we need to derive a \textbf{lower bound} for $ \alpha_{n,r}.$
First, by Lemma \ref{main_lemma} applied to $ r = {1+\delta}=1+\left(n-1\right)^{-0.5}N^{-\frac{2}{n-1}},$ we get that
\begin{align*}
\alpha_{n,{1+\delta}}&=\frac{2\gamma\left(1+O\left(n^{-1}\right)\right)}{N}e^{\left(n-1\right)\left(r-1\right)N^{\frac{2}{n-1}}\left(1+O\left(n^{-0.5}\right)\right)}\\&=\frac{2\gamma\left(1+O\left(n^{-1}\right)\right)}{N}e^{\sqrt{n-1}\left(1+O\left(n^{-0.5}\right)\right)}.
\end{align*}
Hence,
\begin{equation}
\begin{aligned} & |\partial D_{n}|\int_{1+\delta}^{1+2\NN}r^{n-1}\left(1-\alpha_{n,r}\right)^{\frac{N}{2}}dr\leq\\
& |\partial D_{n}|\int_{1+\delta}^{1+2\NN}r^{n-1}\left(1-\alpha_{n,1+\left(n-1\right)^{-0.5}N^{-\frac{2}{n-1}}}\right)^{\frac{N}{2}}dr=\\
& |\partial D_{n}|\int_{1+\delta}^{1+2\NN}r^{n-1}\left(1-\frac{2\gamma\left(1+O\left(n^{-1}\right)\right)}{N}e^{\sqrt{n-1}\left(1+O\left(n^{-0.5}\right)\right)}\right)^{\frac{N}{2}}dr\leq\\
& |\partial D_{n}|\int_{1+\delta}^{1+2\NN}r^{n-1}e^{-c\gamma e^{\sqrt{n-1}\left(1+O\left(n^{-0.5}\right)\right)}}dr\leq\\
& \left(1+o\left(n^{-1}\right)\right)|\partial D_{n}|e^{-c\gamma e^{\sqrt{n-1}\left(1+O\left(n^{-0.5}\right)\right)}}\int_{1}^{1+2\NN}r^{n-1}dr=\\
& |D_{n}|\NN o\left(n^{-0.5}\right).
\end{aligned}
\end{equation}
\end{proof}
\begin{lemma}
\[
|\partial D_{n}|\int_{1+2\NN}^{1+\frac{2}{n}}r^{n-1}\left(1-\alpha_{n,r}\right)^{\frac{N}{2}}dr = |D_{n}|N^{-\frac{2}{n-1}}o\left(n^{-0.5}\right).
\]
\end{lemma}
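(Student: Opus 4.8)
We must show
\[
|\partial D_{n}|\int_{1+2\NN}^{1+\frac{2}{n}}r^{n-1}\left(1-\alpha_{n,r}\right)^{\frac{N}{2}}dr = |D_{n}|N^{-\frac{2}{n-1}}o\left(n^{-0.5}\right).
\]
The plan is to bound this integral in exactly the same spirit as the previous lemma: since $1-\alpha_{n,r}$ is decreasing in $r$, and since $r^{n-1}$ is increasing and bounded on the interval, I would pull out the worst-case factor $(1-\alpha_{n,r})^{N/2}$ evaluated at the \emph{left} endpoint $r = 1+2\NN$ (where $\alpha_{n,r}$ is largest, hence $1-\alpha_{n,r}$ smallest) and replace $r^{n-1}$ by its value at the right endpoint, then integrate the trivial length of the interval. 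The only subtlety is that $r = 1+2\NN$ sits at the boundary of the window $[1-\frac{\NN}{\sqrt{n-1}}, 1+\frac{\NN}{\sqrt{n-1}}]$ where Lemma \ref{main_lemma} applies — in fact $2\NN$ is \emph{larger} than $\frac{\NN}{\sqrt{n-1}}$ — so Lemma \ref{main_lemma} is not directly available at that point and I must instead get a crude lower bound for $\alpha_{n,r}$ valid for all $r$ in the stated range.

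\textbf{Key steps, in order.} First, I would establish a lower bound of the form $\alpha_{n,r} \geq \frac{c\gamma}{N}e^{g(n)}$ valid for $r \geq 1+2\NN$, where $g(n) \to \infty$ faster than any power of $\sqrt{n-1}$ — say $g(n) \gtrsim \sqrt{n-1}$ suffices, but here we should get something like $g(n)\gtrsim n^{1/2}$ or better because the exponent $(n-1)(r-1)N^{\frac2{n-1}}$ at $r-1 = 2\NN$ is of order $n$. Concretely, using the integral formula \eqref{alphanr} for $\alpha_{n,r}$ together with the choice of $t_{n,N}$, one checks that at $r = 1+2\NN$ the cone term alone contributes at least $\frac{2|D_{n-1}|}{n|D_n|}\cdot\frac{t}{r}(1-\frac{t^2}{r^2})^{\frac{n-1}{2}}$, and that $(1-t^2/r^2)^{(n-1)/2}$ is much larger than $(1-t^2)^{(n-1)/2} = (\gamma|\partial D_n|/(N|D_{n-1}|))\cdot(\text{const})$ by a factor that is exponentially large in $n$ — this is where the gap between $r=1$ and $r = 1+2\NN$ is exploited. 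Second, plug this into
\[
(1-\alpha_{n,r})^{N/2} \leq \exp\!\left(-\tfrac{N}{2}\alpha_{n,r}\right) \leq \exp\!\left(-c\gamma\,e^{g(n)}\right),
\]
which is super-exponentially small. Third, bound $\int_{1+2\NN}^{1+\frac2n} r^{n-1}\,dr \leq \frac1n\big((1+\tfrac2n)^n - 1\big) = O(1)$ (using $(1+2/n)^n \to e^2$), and use $|\partial D_n| = n|D_n|$, so that
\[
|\partial D_n|\int_{1+2\NN}^{1+\frac2n} r^{n-1}(1-\alpha_{n,r})^{N/2}\,dr \leq C|D_n|\,e^{-c\gamma e^{g(n)}}.
\]
Finally, since $e^{-c\gamma e^{g(n)}}$ decays faster than $N^{-\frac2{n-1}}n^{-1/2}$ (indeed faster than $e^{-n}$, while $N^{-\frac2{n-1}} \geq n^{-2}$ say for $N \leq n^n$... more care needed here since $N$ can be huge), I conclude the claimed bound.

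\textbf{The main obstacle.} The delicate point is the interplay between $N$ being arbitrarily large and the decay rate. When $N$ is enormous, $\NN$ is tiny, so the interval $[1+2\NN, 1+\frac2n]$ is essentially $[1,1+\frac2n]$ and $\alpha_{n,r}$ at the left endpoint is roughly $\alpha_{n,1}\asymp \gamma/N$ times an exponential factor; one must verify that even in this regime $\frac N2\alpha_{n,r}$ blows up — it does, because $\frac N2 \alpha_{n,r}\approx \gamma e^{(n-1)(r-1)N^{2/(n-1)}}$ and at $r-1 = 2\NN$ the exponent is $\approx 2(n-1)$, giving $\frac N2\alpha_{n,r}\gtrsim \gamma e^{2(n-1)}$, comfortably super-exponential. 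The bookkeeping challenge is therefore to get a clean lower bound on $\alpha_{n,r}$ uniformly over the interval (not relying on Lemma \ref{main_lemma} outside its window), and to make sure the resulting $e^{-c\gamma e^{2(n-1)}}$-type bound dominates $N^{-\frac2{n-1}}n^{-1/2}$ for \emph{every} admissible $N\geq n^n$ — which holds because $N^{-\frac2{n-1}}\geq (n^n)^{-\frac2{n-1}} = n^{-\frac{2n}{n-1}} \geq n^{-3}$, a mere polynomial factor, whereas the exponential decay kills everything.
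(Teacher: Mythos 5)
There is a genuine gap, and it sits exactly at the point you flagged as ``more care needed.'' Your plan is to freeze the factor $\left(1-\alpha_{n,r}\right)^{N/2}$ at the left endpoint $r=1+2\NN$ and multiply by the length of the interval, which is of order $n^{-1}$. Since $\frac N2\,\alpha_{n,1+2\NN}\approx \gamma e^{2(n-1)}$, this gives a bound of order $|D_n|\,e^{-\gamma e^{2(n-1)}}$, which is \emph{independent of $N$}. But the lemma asserts a bound of the form $o\left(n^{-0.5}\right)N^{-\frac{2}{n-1}}|D_n|$ uniformly over all $N\geq n^n$, with no upper restriction on $N$: for fixed $n$ and $N\to\infty$ the right-hand side tends to $0$, while your bound does not, so the frozen-endpoint estimate cannot suffice once $N$ exceeds roughly $\exp\left(c\,n\,e^{2n}\right)$. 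Your attempted patch reverses an inequality: from $N\geq n^n$ one gets $N^{-\frac{2}{n-1}}\leq n^{-\frac{2n}{n-1}}$, not $N^{-\frac{2}{n-1}}\geq n^{-3}$; there is no lower bound on $N^{-\frac{2}{n-1}}$ because $N$ is unbounded above. (A smaller slip: on this interval $\alpha_{n,r}$ is \emph{smallest}, and $1-\alpha_{n,r}$ \emph{largest}, at the left endpoint; the endpoint you chose is still the correct one for an upper bound, but for the opposite reason you state.)

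The paper's proof avoids this loss by keeping the $r$-dependence instead of discarding it. From Eq.~\eqref{alphanr} it retains only the cone term, writes $1-\frac{t_{n,N}^2}{r^2}= r^{-2}\left(1-t_{n,N}^2+2(r-1)+(r-1)^2\right)$, substitutes $1-t_{n,N}^2=\left(\frac{\gamma|\partial D_n|}{N|D_{n-1}|}\right)^{\frac{2}{n-1}}$, and applies Bernoulli's inequality $(1+b)^{\frac{n-1}{2}}\geq 1+\frac{n-1}{2}b$ to obtain, on the whole interval and without invoking Lemma~\ref{main_lemma} outside its window (as you correctly anticipated one must), a bound of the shape $\alpha_{n,r}\geq \frac{c\gamma}{N}\left(1+(n-1)(r-1)N^{\frac{2}{n-1}}\left(1+O\left(\frac{\ln n}{n}\right)\right)\right)$. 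Hence $\left(1-\alpha_{n,r}\right)^{N/2}\leq e^{-c\gamma}\,e^{-c_1\gamma\,n N^{\frac{2}{n-1}}(r-1)}$, and integrating this exponential over $r\in\left[1+2\NN,1+\frac2n\right]$ produces the decisive factor $\left(nN^{\frac{2}{n-1}}\right)^{-1}e^{-2c_1\gamma n}$; together with $r^{n-1}\leq e^2$ and $|\partial D_n|=n|D_n|$ this yields $|D_n|N^{-\frac{2}{n-1}}O\left(e^{-cn}\right)$, which is the claimed $|D_n|N^{-\frac{2}{n-1}}o\left(n^{-0.5}\right)$ uniformly in $N$. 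In short, the missing idea is that the $N^{-\frac{2}{n-1}}$ scaling must come from the effective width $\sim\left(nN^{\frac{2}{n-1}}\right)^{-1}$ of the region where the integrand is non-negligible, i.e.\ from integrating the exponential in $r$, not from the crude interval length $\frac2n$.
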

\begin{proof}
By Eq. \eqref{alphanr},
\begin{equation*}
\begin{aligned}
\alpha_{n,r} &\geq \frac{2|D_{n-1}|}{|D_n|}\frac {t_{n,N}}{nr} \left(1-\frac{t_{n,N}^2}{r^2}\right)^{\frac{n-1}{2}} \\&=\left(1+O\left(n^{-1}\right)\right)\frac{2|D_{n-1}|}{|\partial D_n|}\left(1-\frac{t_{n,N}^2}{r^2}\right)^{\frac{n-1}{2}}
\end{aligned}
\end{equation*}
where $t_{n,N}=\sqrt{1 - \left(\frac{\gamma|\partial D_n|}{N|D_{n-1}|}\right)^{\frac 2{n-1}}}$. Hence,
\begin{equation*}
\begin{aligned} & |\partial D_{n}|\int_{1+2\NN}^{1+\frac{2}{n}}r^{n-1}\left(1-\alpha_{n,r}\right)^{\frac{N}{2}}dr\leq\\
& e^{2}|\partial D_{n}|\int_{1+2\NN}^{1+\frac{2}{n}}\left(1-\alpha_{n,r}\right)^{\frac{N}{2}}dr\leq\\
& C|\partial D_{n}|\int_{1+2\NN}^{1+\frac{2}{n}}\left(1-\left(1+O\left(n^{-1}\right)\right)\frac{2|D_{n-1}|}{|\partial D_{n}|}\left(1-\frac{t_{n,N}^{2}}{r^{2}}\right)^{\frac{n-1}{2}}\right)^{\frac{N}{2}}dr\leq\\
& C|\partial D_{n}|\int_{1+2\NN}^{1+\frac{2}{n}}e^{-N\left(1+O\left(n^{-1}\right)\right)\frac{|D_{n-1}|}{|\partial D_{n}|}\left(1-\frac{t_{n,N}^{2}}{r^{2}}\right)^{\frac{n-1}{2}}}dr\leq\\
& C|\partial D_{n}|\int_{1+2\NN}^{1+\frac{2}{n}}e^{-N\left(1+O\left(n^{-1}\right)\right)r{}^{-\left(n-1\right)}\frac{|D_{n-1}|}{|\partial D_{n}|}\left(r^{2}-t_{n,N}^{2}\right)^{\frac{n-1}{2}}}dr.
\end{aligned}
\end{equation*}
Again, using the fact that $ r \leq 1 + \frac 2n, $ the previous expression is no more than
\begin{equation*}
\begin{aligned} & C|\partial D_{n}|\int_{1+2\NN}^{1+\frac{2}{n}}e^{-cN\frac{|D_{n-1}|}{|\partial D_{n}|}\left(\left(1+\left(r-1\right)\right)^{2}-t_{n,N}^{2}\right)^{\frac{n-1}{2}}}dr=\\
& C|\partial D_{n}|\int_{1+2\NN}^{1+\frac{2}{n}}e^{-cN\frac{|D_{n-1}|}{|\partial D_{n}|}\left(1-t_{n,N}^{2}+2\left(r-1\right)\right)^{\frac{n-1}{2}}}dr.
\end{aligned}
\end{equation*}
Now we use that $t_{n,N} = \sqrt{1-\left(\frac{\gamma|D_{n-1}|}{|\partial D_n|N}\right)^{\frac 2{n-1}}}  $ and the fact that $ \left(1+b\right)^n\geq 1+nb $ on $ [0,\infty) $ to derive that the previous expression equals
\begin{equation}
\begin{aligned} & C|\partial D_{n}|\int_{1+2\NN}^{1+\frac{2}{n}}e^{-c\gamma N\frac{|D_{n-1}|}{|\partial D_{n}|}\left(\left(\frac{\gamma|\partial D_{n}|}{|D_{n-1}|N}\right)^{\frac{2}{n-1}}+2\left(r-1\right)\right)^{\frac{n-1}{2}}}dr\leq\\
& C|\partial D_{n}|\int_{1+2\NN}^{1+\frac{2}{n}}e^{-2c\gamma\left(1+2\left(r-1\right)N^{\frac{2}{n-1}}\left(1+O\left(\frac{\ln\left(n\right)}{n}\right)\right)\right)^{\frac{n-1}{2}}}dr\leq\\
& C|\partial D_{n}|\int_{1+2\NN}^{1+\frac{2}{n}}e^{-2c\gamma\left(1+\left(n-1\right)\left(r-1\right)N^{\frac{2}{n-1}}\left(1+O\left(\frac{\ln\left(n\right)}{n}\right)\right)\right)}dr\leq\\
& C_{1}|\partial D_{n}|\int_{2\NN}^{\frac{2}{n}}e^{-c_{1}\gamma nN^{\frac{2}{n-1}}r}dr=|D_{n}|N^{-\frac{2}{n-1}}O\left(n^{-1}\right).
\end{aligned}
\end{equation}
\end{proof}
\begin{lemma}
\[
|\partial D_{n}|\int_{1+\frac{2}{n}}^{n^{2}}r^{n-1}\left(1-\alpha_{n,r}\right)^{\frac{N}{2}}dr = |D_{n}|N^{-\frac{2}{n-1}}o\left(n^{-0.5}\right)
\]
\end{lemma}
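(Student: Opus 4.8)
The plan is to exploit that on the range $r\in[1+\tfrac2n,n^2]$ the point $x$ is bounded away from the sphere, so that $\alpha_{n,r}$, while still small, is large enough to make $(1-\alpha_{n,r})^{N/2}$ doubly-exponentially small once we use $N\ge n^n$; this decay easily absorbs the polynomial growth of $r^{n-1}$ and the factor $|\partial D_n|=n|D_n|$. In particular a single crude uniform lower bound for $\alpha_{n,r}$ on the whole interval suffices, and no fine asymptotics of $\alpha_{n,r}$ are needed.

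First I would record a uniform lower bound for $\alpha_{n,r}$. Keeping only the cone term in Eq.~\eqref{alphanr} gives
\[
\alpha_{n,r}\ \ge\ \frac{2|D_{n-1}|}{|D_n|}\,\frac{t_{n,N}}{nr}\Bigl(1-\frac{t_{n,N}^2}{r^2}\Bigr)^{\frac{n-1}{2}}\ =\ \frac{2|D_{n-1}|}{|D_n|}\,\frac{t_{n,N}}{n}\,g(r),\qquad g(r):=\bigl(r^2-t_{n,N}^2\bigr)^{\frac{n-1}{2}}r^{-n}.
\]
Since $N\ge n^n$ one has $1-t_{n,N}^2=\Theta(1)\,\NN\le n^{-2}$ eventually, hence $t_{n,N}\ge\tfrac12$; and by Lemma~1.1, $|D_{n-1}|/|D_n|\asymp\sqrt n$. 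A one-line computation shows $g'(r)/g(r)=(nt_{n,N}^2-r^2)/\bigl(r(r^2-t_{n,N}^2)\bigr)$, so $g$ increases on $[\,\cdot\,,t_{n,N}\sqrt n\,]$ and decreases afterward; as $1+\tfrac2n<t_{n,N}\sqrt n<n^2$ for $n$ large, the minimum of $g$ over $[1+\tfrac2n,n^2]$ is attained at an endpoint. Using $r^2-t_{n,N}^2\ge(1+\tfrac2n)^2-1\ge\tfrac4n$ and $(1+\tfrac2n)^{-n}\ge e^{-2}$ at the left endpoint, and $r^2-t_{n,N}^2\ge\tfrac12 r^2$ at $r=n^2$, one finds $g(r)\ge c\,n^{-(n-1)/2}$ throughout (the left endpoint is the binding one). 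Therefore there is an absolute $c>0$ with
\[
\alpha_{n,r}\ \ge\ c\,n^{-\frac n2}\qquad\text{for all }r\in[1+\tfrac2n,\,n^2].
\]
(Equivalently one may note that $\alpha_{n,r}$ is nondecreasing in $r$ — replacing $x$ by $\tfrac{r'}{r}x$ only enlarges $\{y:\inner{x}{y}\ge t_{n,N}\}$ — and evaluate the cone bound at $r=1+\tfrac2n$.)

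Then I would finish by a crude integration. Using $1-\alpha_{n,r}\le e^{-\alpha_{n,r}}$, the bound just proved, $r^{n-1}\le n^{2(n-1)}$, $\int_{1+2/n}^{n^2}r^{n-1}\,dr\le n^{2n-1}$ and $|\partial D_n|=n|D_n|$,
\[
|\partial D_n|\int_{1+\frac2n}^{n^2}r^{n-1}\bigl(1-\alpha_{n,r}\bigr)^{\frac N2}\,dr\ \le\ |D_n|\,n^{2n}\,\exp\!\Bigl(-\tfrac{c}{2}\,n^{-\frac n2}\,N\Bigr).
\]
It remains to verify $n^{2n}\,\NNN\,\exp\!\bigl(-\tfrac c2 n^{-n/2}N\bigr)=o(n^{-0.5})$. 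For fixed $n$ the map $N\mapsto \NNN e^{-\frac c2 n^{-n/2}N}$ is decreasing for $N$ past $\Theta(n^{n/2})$, hence on all of $[n^n,\infty)$; so its supremum there is at $N=n^n$, where (using $\tfrac{2n}{n-1}\le4$) the whole expression is at most $n^{2n+4}\exp\!\bigl(-\tfrac c2 n^{n/2}\bigr)$. Since $n^{n/2}$ dominates $(2n+4)\ln n$, this tends to $0$ faster than any negative power of $n$, which is the claim. The only delicate point is the uniform lower bound on $\alpha_{n,r}$ (the unimodality of $g$ and the verification that the left endpoint is worst); the rest is routine.
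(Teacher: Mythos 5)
Your proof is correct and follows essentially the same route as the paper's: a crude cone-term lower bound $\alpha_{n,r}\gtrsim n^{-n/2}$ on this range, so that $(1-\alpha_{n,r})^{N/2}\le e^{-cNn^{-n/2}}$ is doubly exponentially small once $N\ge n^n$ and swamps the polynomial factors $r^{n-1}\le n^{2(n-1)}$ and $|\partial D_n|=n|D_n|$. The only cosmetic differences are that the paper uses monotonicity of $\alpha_{n,r}$ in $r$ and evaluates the cone bound at $r=1+\frac{2}{n}$ (the shortcut you note parenthetically) instead of your unimodality analysis of $g$, and that your explicit verification of uniformity over $N\ge n^n$ at the end is a bit more careful than the paper's one-line assertion.
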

\begin{proof}
Recalling that $ \alpha_{n,r} $ is decreasing in $ r, $ we derive that
\begin{equation}\label{blabla}
\begin{aligned}
|\partial D_{n}|\int_{1+\frac{2}{n}}^{n^{2}}r^{n-1}\left(1-\alpha_{n,r}\right)^{\frac{N}{2}}dr&\leq
|\partial D_{n}|\int_{1+\frac{2}{n}}^{n^{2}}r^{n-1}\left(1-\alpha_{n,1+\frac 2n}\right)^{\frac{N}{2}}dr\\
&\leq|\partial D_{n}|n^{2n}\int_{1+\frac{2}{n}}^{n^{2}}\left(1-\alpha_{n,1+\frac 2n}\right)^{\frac{N}{2}}dr.
\end{aligned}
\end{equation}
In order to continue, we derive an upper bound for $ \alpha_{n,1+\frac 2n} .$ Using the fact that
\[
\alpha_{n,r} > \frac{2|D_{n-1}|}{|D_n|}\frac {t_{n,N}}{nr} \left(1-\frac{t_{n,N}^2}{r^2}\right)^{\frac{n-1}{2}},
\]
and also that $ t_{n,N} = 1-O\left(\frac 1{n^2}\right) $ and  $r = 1+\frac{2}{n},$ it holds that
\begin{equation*}
\begin{aligned}\alpha_{n,1+\frac{2}{n}} & \geq\left(1+O\left(n^{-1}\right)\right)\frac{2|D_{n-1}|}{|\partial D_{n}|}\left(1-\frac{t_{n,N}^{2}}{r^{2}}\right)^{\frac{n-1}{2}}\\
& \geq c_{1}n^{-0.5}\left(\frac{4}{n}+o\left(n^{-1}\right)\right)^{\frac{n-1}{2}}\geq c_{2}\left(\frac{4}{n}\right)^{\frac{n}{2}}.
\end{aligned}
\end{equation*}
Now we continue from the end of Eq. \eqref{blabla} to derive that
\begin{equation}
\begin{aligned}  |\partial D_{n}|n^{2n}\int_{1+\frac{2}{n}}^{n^{2}}\left(1-c_{2}\left(\frac{4}{n}\right)^{\frac{n}{2}}\right)^{\frac{N}{2}}dr&\leq\,|\partial D_{n}|n^{2n}\int_{1+\frac{2}{n}}^{n^{2}}e^{-Nn^{-\frac{n}{2}}}dr\\
&\leq  |\partial D_{n}|n^{2n+2}e^{-\sqrt{N}}\\
 &=|\partial D_{n}|n^{2n}\int_{1+\frac{2}{n}}^{n^{2}}e^{-\sqrt{N}}dr\\
 &=|D_{n}|N^{-\frac{2}{n-1}}o\left(n^{-0.5}\right),
\end{aligned}
\end{equation}
where we used the assumption that $ N\geq n^n.$ 
\end{proof}
The next lemma is proven in Section \ref{Techandloose} and will be used to prove Lemma \ref{3.9} below.
\begin{lemma}{\label{tech_lemma}}
	Assume that $ r\geq n^2 $. Then 
	\be
	\alpha_{n,r} \geq 1-\frac{C\sqrt{n}}{r}	.
	\ee
\end{lemma}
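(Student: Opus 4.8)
}

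The plan is to read the estimate off directly from the \emph{exact} formula for $\alpha_{n,r}$ recorded in Eq.~\eqref{alphanr}, rather than from the asymptotic expansions (Lemma~\ref{main_lemma}) that are used in the earlier range. The one ingredient I would isolate first is the elementary identity $\int_0^1(1-s^2)^{\frac{n-1}{2}}\,ds=\frac{|D_n|}{2|D_{n-1}|}$, which is just the one-dimensional slicing formula for $|D_n|$ along a coordinate axis. Since $r\geq n^2>1>t_{n,N}$, we have $t_{n,N}/r\in(0,1)$, so Eq.~\eqref{alphanr} applies with $\|x\|_2=r$; splitting $\int_{t_{n,N}/r}^{1}=\int_0^1-\int_0^{t_{n,N}/r}$ and using the identity, it rearranges to
\[
\alpha_{n,r}=1-\frac{2|D_{n-1}|}{|D_n|}\int_0^{t_{n,N}/r}\left(1-s^2\right)^{\frac{n-1}{2}}ds+\frac{2|D_{n-1}|}{|D_n|}\cdot\frac{t_{n,N}}{nr}\left(1-\frac{t_{n,N}^2}{r^2}\right)^{\frac{n-1}{2}}.
\]

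Since only a lower bound is needed, I would then drop the last term, which is nonnegative, bound the cap integrand trivially by $(1-s^2)^{\frac{n-1}{2}}\leq 1$ to get $\int_0^{t_{n,N}/r}(1-s^2)^{\frac{n-1}{2}}ds\leq t_{n,N}/r\leq 1/r$, and finally invoke the auxiliary lemma bounding $|D_n|/|D_{n-1}|$ (the first of the stated auxiliary results) to get $\frac{2|D_{n-1}|}{|D_n|}\leq \frac{2\sqrt n}{c}=:C\sqrt n$. Combining these three observations yields $\alpha_{n,r}\geq 1-C\sqrt n/r$, which is the claim.

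I do not expect any genuine obstacle here; the computation is elementary once Eq.~\eqref{alphanr} is taken as given. The only points requiring a line of care are (i) checking that Eq.~\eqref{alphanr} is legitimately applicable in the stated range, which is exactly what $r\geq n^2$ guarantees (it forces $0<t_{n,N}/r<1$, so the spherical cap is nonempty and the cone/cap decomposition is valid), and (ii) noting that the hypothesis $r\geq n^2$ is also what makes the conclusion non-vacuous, since then $C\sqrt n/r\leq Cn^{-3/2}\to 0$, i.e.\ $\alpha_{n,r}$ is pushed close to $1$. If desired one could retain the cone term and obtain the two-sided bound $1-C\sqrt n/r\leq \alpha_{n,r}\leq 1$, but only the lower estimate is needed for Lemma~\ref{3.9}.
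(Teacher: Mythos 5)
Your proposal is correct and follows essentially the same route as the paper: drop the nonnegative cone term in Eq.~\eqref{alphanr}, use the slicing identity $|D_{n-1}|\int_0^1(1-x^2)^{\frac{n-1}{2}}dx=\tfrac{|D_n|}{2}$ to rewrite the cap probability as $1$ minus the complementary integral over $[0,t_{n,N}/r]$, and control the ratio $|D_{n-1}|/|D_n|$ by $C\sqrt{n}$. Your final step (bounding the integrand by $1$ to get $\leq 1/r$) is in fact a slight simplification of the paper's last lines, which instead integrate $(1-x)^{\frac{n-1}{2}}$ explicitly and expand $(1-1/r)^{\frac{n+1}{2}}$, arriving at the same bound $1-\tfrac{C\sqrt{n}}{r}$.
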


\begin{lemma}\label{3.9}
\[
|\partial D_{n}|\int_{n^{2}}^{\infty}r^{n-1}\left(1-\alpha_{n,r}\right)^{\frac{N}{2}}dr = |D_{n}|\NN o\left(n^{-0.5}\right)
\]
\end{lemma}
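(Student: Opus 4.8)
The plan is to use Lemma~\ref{tech_lemma} to bound the integrand pointwise over the whole range of integration. Since $\alpha_{n,r}$ is a probability, $0\le 1-\alpha_{n,r}$, and Lemma~\ref{tech_lemma} gives $1-\alpha_{n,r}\le C\sqrt{n}/r$ for every $r\ge n^2$; note that $C\sqrt{n}/r\le Cn^{-3/2}<1$ on this range once $n$ is large, so the estimate is not vacuous. As $s\mapsto s^{N/2}$ is increasing,
\[
|\partial D_n|\int_{n^2}^{\infty} r^{n-1}\left(1-\alpha_{n,r}\right)^{\frac N2}dr \;\le\; |\partial D_n|\,(C\sqrt n)^{\frac N2}\int_{n^2}^{\infty} r^{\,n-1-\frac N2}\,dr .
\]
Because $N\ge n^n$, the exponent $n-1-\tfrac N2$ is far below $-1$, so the integral converges and equals $\dfrac{n^{2n-N}}{N/2-n}$. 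Writing $(C\sqrt n)^{N/2}=C^{N/2}n^{N/4}$ and collecting powers of $n$, the right-hand side is at most $|\partial D_n|\,\dfrac{C^{N/2}\,n^{2n-3N/4}}{N/2-n}$.

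It remains to check that this is $o(n^{-0.5})\,\NN\,|D_n|$, uniformly in $N\ge n^n$. Dividing by $\NN|D_n|$ and using $|\partial D_n|=n|D_n|$, one gets
\[
\frac{|\partial D_n|}{\NN\,|D_n|}\cdot\frac{C^{N/2}n^{2n-3N/4}}{N/2-n} \;=\; n\,C^{N/2}n^{2n-3N/4}\cdot\frac{\NNN}{N/2-n}.
\]
For $N\ge n^n$ we have $N/2-n\ge N/4$, and for $n\ge 4$ we have $\NNN=N^{2/(n-1)}\le N$, so $\NNN/(N/2-n)\le 4$; hence the displayed quantity is at most $4\,n^{2n+1}\bigl(C^2 n^{-3}\bigr)^{N/4}$. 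Once $n$ is large enough that $n^3>C^2$, the factor $(C^2 n^{-3})^{N/4}\le 2^{-N/4}\le 2^{-n^n/4}$ overwhelms the polynomial prefactor $4n^{2n+1}$, so the whole expression tends to $0$ as $n\to\infty$ faster than any power of $n$; in particular it is $o(n^{-0.5})$. This gives the claimed bound.

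I do not expect a genuine obstacle here: all the substance is contained in Lemma~\ref{tech_lemma}, which is established in Section~\ref{Techandloose}. The two points that require a little care are (i) checking that $C\sqrt n/r<1$ on $[n^2,\infty)$, so that the pointwise estimate $(1-\alpha_{n,r})^{N/2}\le (C\sqrt n/r)^{N/2}$ actually forces the tail integral to be finite, and (ii) keeping the estimate uniform in $N$; both are handled by the hypothesis $N\ge n^n$, which simultaneously guarantees convergence of $\int_{n^2}^\infty r^{\,n-1-N/2}dr$ and makes the super-exponential decay in $N$ swamp all the polynomial-in-$n$ factors.
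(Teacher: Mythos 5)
Your proposal is correct and follows essentially the same route as the paper: both rely on Lemma \ref{tech_lemma} to get $1-\alpha_{n,r}\le C\sqrt n/r$ on $[n^2,\infty)$ and then estimate the resulting tail integral directly, using $N\ge n^n$ to make the super-exponential decay in $N$ swallow the $r^{n-1}$ factor and all polynomial-in-$n$ prefactors. The only difference is cosmetic: the paper absorbs $r^{n-1}$ by crudely replacing $r^{n-1-N/2}$ with $r^{-N/3}$, whereas you evaluate the convergent integral exactly; both yield the stated $|D_n|\NN o\left(n^{-0.5}\right)$ bound.
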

\begin{proof}
We have that
\begin{equation}
\begin{aligned}  |\partial D_{n}|\int_{n^{2}}^{\infty}r^{n-1}\left(1-\alpha_{n,r}\right)^{\frac{N}{2}}dr&\leq|\partial D_{n}|\int_{n^{2}}^{\infty}r^{n-1}\left(\frac{C\sqrt{n}}{r}\right)^{\frac{N}{2}}dr\\&\leq  |\partial D_{n}|C^{N}n^{\frac{N}{2}}\int_{n^{2}}^{\infty}r^{-\frac{N}{3}}dr\\&\leq|\partial D_{n}|C^{N}n^{\frac{N}{2}}n^{-\frac{2}{3}N+2}\int_{1}^{\infty}r^{-\frac{N}{3}}\\&=|D_{n}|\NN o\left(n^{-0.5}\right).
\end{aligned}
\end{equation}
\end{proof}
Putting everything together,  Lemma \ref{lemma_2} now follows from all of the lemmas that were proven in this subsection, and finally we derive that
\begin{equation}
\E[|P \setminus D_n|] = \left(\int_{0}^{\infty}e^{-\ln(2)e^{t}}dt+O\left(n^{-0.5}\right)\right)\NN|D_n|.
\end{equation}
\qed
\subsection{Proof of Lemma \ref{firstpartlem}}
	First, we split the integral of Eq. \eqref{Partone} into two parts
	\begin{align*}
	|\partial D_{n}|\left(\int_{1-\delta}^{1}r^{n-1}\left(1-\left(1-\alpha_{n,r}\right)^{\frac{N}{2}}\right)dr + \int_{t_{n,N}}^{1-\delta}r^{n-1}\left(1-\left(1-\alpha_{n,r}\right)^{\frac{N}{2}}\right)dr \right).
	\end{align*}
Next, we estimate the first integral.
	\begin{lemma}
		$$
		|\partial D_{n}|\int_{1-\delta}^{1}r^{n-1}\left(1-\left(1-\alpha_{n,r}\right)^{\frac{N}{2}}\right)dr = \left(\int_{0}^{1}t^{-1}(1-e^{-\gamma t})dt+O(n^{-0.5})\right)\NN|D_n|.
		$$
	\end{lemma}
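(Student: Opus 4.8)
The plan is to reduce the integral over $[1-\delta,1]$ to a one-variable Laplace-type integral by substituting for $\alpha_{n,r}$ the asymptotic expansion provided by Lemma~\ref{main_lemma}, which is valid precisely on the window $r\in[1-\NN/\sqrt{n-1},\,1+\NN/\sqrt{n-1}]\supset[1-\delta,1]$ since $\delta=(n-1)^{-0.5}\NN$. On this range we have $r^{n-1}=1+O(n^{-0.5})$ and, writing $s=(n-1)(1-r)N^{\frac2{n-1}}$, we get $\alpha_{n,r}=\frac{2\gamma}{N}(1+O(n^{-1}))e^{-s(1+O(n^{-0.5}))}$, which is $O(N^{-1})$ uniformly, so that $\bigl(1-\alpha_{n,r}\bigr)^{N/2}=\exp\!\bigl(-\frac N2\alpha_{n,r}(1+O(\alpha_{n,r}))\bigr)=\exp\!\bigl(-\gamma e^{-s}(1+O(n^{-0.5}))\bigr)$. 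Hence the integrand $1-(1-\alpha_{n,r})^{N/2}$ becomes $1-e^{-\gamma e^{-s}}$ up to the multiplicative error $1+O(n^{-0.5})$ and an additive error that is itself $O(n^{-0.5})$ times the main quantity.

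Next I would change variables. With $s=(n-1)(1-r)N^{\frac{2}{n-1}}$ we have $dr=-\frac{1}{(n-1)N^{\frac2{n-1}}}\,ds$, and as $r$ runs over $[1-\delta,1]$ the variable $s$ runs over $[0,\sqrt{n-1}]$. Therefore
\[
|\partial D_n|\int_{1-\delta}^{1}r^{n-1}\Bigl(1-(1-\alpha_{n,r})^{N/2}\Bigr)dr
=\bigl(1+O(n^{-0.5})\bigr)\frac{|\partial D_n|}{(n-1)N^{\frac{2}{n-1}}}\int_{0}^{\sqrt{n-1}}\bigl(1-e^{-\gamma e^{-s}}\bigr)ds.
\]
Now $\frac{|\partial D_n|}{n-1}=\frac{n}{n-1}|D_n|=(1+O(n^{-1}))|D_n|$ (using $|\partial D_n|=n|D_n|$), so the prefactor is $(1+O(n^{-0.5}))\NN|D_n|$. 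Finally, the substitution $t=e^{-s}$, $dt=-e^{-s}ds=-t\,ds$, turns $\int_0^{\infty}(1-e^{-\gamma e^{-s}})\,ds$ into $\int_0^1 t^{-1}(1-e^{-\gamma t})\,dt$, which is exactly $I$ and is a convergent integral (the integrand is $\gamma+O(t)$ near $0$); truncating the $s$-integral at $\sqrt{n-1}$ rather than $\infty$ costs only $\int_{\sqrt{n-1}}^{\infty}(1-e^{-\gamma e^{-s}})\,ds\le\gamma\int_{\sqrt{n-1}}^{\infty}e^{-s}\,ds=\gamma e^{-\sqrt{n-1}}$, which is absorbed into $O(n^{-0.5})$.

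The main obstacle is bookkeeping of the error terms: one must check that the $(1+O(n^{-0.5}))$ factor sitting inside the exponent of $e^{-s(1+O(n^{-0.5}))}$ does not blow up when $s$ is as large as $\sqrt{n-1}$ — indeed $s\cdot O(n^{-0.5})=O(1)$ there, so the exponent is perturbed by an $O(1)$ amount, not an $o(1)$ amount. This is handled by splitting the $s$-range: on $s\in[0,n^{1/4}]$ the perturbation is genuinely $o(1)$ and the estimate above goes through cleanly, while on $s\in[n^{1/4},\sqrt{n-1}]$ one simply bounds $1-e^{-\gamma e^{-s}}\le \gamma e^{-s}\le\gamma e^{-n^{1/4}}$ and notes that the whole contribution of that tail is $O(n^{-0.5})\NN|D_n|$ regardless of the precise constant in the exponent. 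After this splitting, collecting the prefactor $\NN|D_n|$ and the value $I$ of the limiting integral completes the proof.
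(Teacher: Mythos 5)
Your proposal is correct and takes essentially the same route as the paper: substitute the expansion of $\alpha_{n,r}$ from Lemma \ref{main_lemma}, replace $(1-\alpha_{n,r})^{N/2}$ by $e^{-\gamma e^{-s}}$, change variables $s=(n-1)(1-r)N^{\frac{2}{n-1}}$, and finish with $t=e^{-s}$ to produce $\int_0^1 t^{-1}(1-e^{-\gamma t})\,dt$, your bookkeeping of the exponent perturbation at large $s$ being, if anything, more explicit than the paper's. The only quibble is quantitative: the crude uniform bound on the range $s\in[0,n^{1/4}]$ yields an error $O(n^{-1/4})$ rather than the stated $O(n^{-0.5})$, which is recovered at no extra cost by instead integrating the pointwise bound $\gamma s e^{-s}\cdot O(n^{-1/2})$ over all $s$.
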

	\begin{proof}
	For  $r\in [1-\delta,1] $, we use Lemma \ref{main_lemma} to estimate $ \alpha_{n,r} $  and derive that
	\[
	\alpha_{n,r}=\frac{2\gamma\left(1+O\left(n^{-1}\right)\right)}{N}e^{\left(n-1\right)\left(r-1\right)N^{\frac{2}{n-1}}\left(1+O\left(n^{-0.5}\right)\right)}.
	\]  
	Hence,
	\begin{equation*}
	\begin{aligned} & |\partial D_{n}|\int_{1-\delta}^{1}r^{n-1}\left(1-\left(1-\alpha_{n,r}\right)^{\frac{N}{2}}\right)dr\leq\\
	& |\partial D_{n}|\int_{1-\delta}^{1}r^{n-1}\left(1-\left(1-\frac{2\gamma\left(1+O\left(n^{-1}\right)\right)}{N}e^{\left(n-1\right)\left(r-1\right)N^{\frac{2}{n-1}}\left(1+O\left(n^{-0.5}\right)\right)}\right)^{\frac{N}{2}}\right)dr.
	\end{aligned}
	\end{equation*}
	Using the equality $ 1-x_n= \left(1+O\left(x_n^2\right)\right)e^{-x_n} $, where $ x_n = O\left(n^{-1}\right) $, we obtain 
	\begin{equation*}
	\begin{aligned} & |\partial D_{n}|\int_{1-\delta}^{1}r^{n-1}\left(1-e^{-\gamma e^{\left(1+O\left(n^{-1}\right)\right)\left(n-1\right)\left(r-1\right)N^{\frac{2}{n-1}}\left(1+O\left(n^{-0.5}\right)\right)}}\right)dr=\\
	& |\partial D_{n}|\int_{1-\delta}^{1}r^{n-1}\left(1-\left(1+O\left(n^{-1}\right)\right)e^{-\gamma e^{\left(1+O\left(n^{-1}\right)\right)\left(n-1\right)\left(r-1\right)N^{\frac{2}{n-1}}\left(1+O\left(n^{-0.5}\right)\right)}}\right)dr=\\
	& (1+O(n^{-1}))|\partial D_{n}|\int_{1-\delta}^{1}1-e^{-\gamma e^{\left(1+O\left(n^{-1}\right)\right)\left(n-1\right)\left(r-1\right)N^{\frac{2}{n-1}}\left(1+O\left(n^{-0.5}\right)\right)}}dr=\\
	& (1+O(n^{-0.5}))|D_{n}|\NN\int_{0}^{n^{0.5}}1-e^{-\gamma e^{-x}}dx=\\
	& (1+O(n^{-0.5}))|D_{n}|\NN\int_{0}^{1}t^{-1}(1-e^{-\gamma t})dt.
	\end{aligned}
	\end{equation*}
\end{proof}
We now estimate the second integral in Eq. \eqref{firstpartlem}.
\begin{lemma}
	\[
	|\partial D_n|\int_{t_{n,N}}^{1-\delta}r^{n-1}\left(1-\left(1-\alpha_{n,r}\right)^{\frac{N}{2}}\right)dr = o(n^{0.5})\NN|D_n|.
	\]
\end{lemma}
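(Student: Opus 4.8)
The plan is to bound the integrand pointwise by an elementary estimate and then reduce everything to the behaviour of $\alpha_{n,r}$ at the single value $r=1-\delta$. Since $\alpha_{n,r}$ is a probability, $\alpha_{n,r}\in[0,1]$, and since $N/2$ is a positive integer we may use $1-(1-x)^{N/2}\le \tfrac N2 x$ for $x\in[0,1]$. Moreover $\alpha_{n,r}$ is nondecreasing in $r$ — a point at Euclidean distance $r$ from the origin is the more likely to lie outside a fixed slab the larger $r$ is, and this is also visible from \eqref{alphanr} — so on $[t_{n,N},1-\delta]$ we have $1-(1-\alpha_{n,r})^{N/2}\le \tfrac N2\alpha_{n,r}\le \tfrac N2\alpha_{n,1-\delta}$. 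Hence
\[
|\partial D_n|\int_{t_{n,N}}^{1-\delta}r^{n-1}\bigl(1-(1-\alpha_{n,r})^{N/2}\bigr)\,dr\;\le\;\Bigl(\tfrac N2\alpha_{n,1-\delta}\Bigr)\,|\partial D_n|\int_{t_{n,N}}^{1-\delta}r^{n-1}\,dr.
\]
Now $r=1-\delta=1-(n-1)^{-1/2}N^{-2/(n-1)}$ is exactly the left endpoint of the range on which Lemma \ref{main_lemma} applies, and for that value $(n-1)(r-1)N^{2/(n-1)}=-(n-1)^{1/2}$; thus Lemma \ref{main_lemma} gives $\tfrac N2\alpha_{n,1-\delta}=\gamma(1+O(n^{-1}))\,e^{-\sqrt{n-1}\,(1+O(n^{-1/2}))}\le \gamma e^{-\sqrt{n-1}/2}$ for $n$ large.

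It remains to bound $|\partial D_n|\int_{t_{n,N}}^{1-\delta}r^{n-1}\,dr$, and this is the one point requiring care: one must \emph{not} use the crude estimate $\int_{t_{n,N}}^{1-\delta}r^{n-1}\,dr\le\int_0^1 r^{n-1}\,dr=\tfrac1n$, since that yields only a bound of order $e^{-\sqrt{n-1}}|D_n|$, which fails to be $o(n^{-1/2})\NN|D_n|$ uniformly in $N$ (for very large $N$ the quantity $\NN=N^{-2/(n-1)}$ is minuscule). Instead, using $r^{n-1}\le1$ on the interval, $\int_{t_{n,N}}^{1-\delta}r^{n-1}\,dr\le 1-\delta-t_{n,N}\le 1-t_{n,N}$. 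Writing $a:=\bigl(\gamma|\partial D_n|/(N|D_{n-1}|)\bigr)^{2/(n-1)}$, so that $t_{n,N}=\sqrt{1-a}$, the inequality $\sqrt{1-a}\ge 1-a$ gives $1-t_{n,N}\le a$; and since $|\partial D_n|=n|D_n|$ and $|D_n|/|D_{n-1}|\le C/\sqrt n$, we get $|\partial D_n|/|D_{n-1}|\le C\sqrt n$, hence $a\le(C\gamma\sqrt n)^{2/(n-1)}\NN\le 2\NN$ for $n$ large. Therefore $|\partial D_n|\int_{t_{n,N}}^{1-\delta}r^{n-1}\,dr\le 2n\,\NN|D_n|$. (The same estimates give $1-t_{n,N}\ge a/2\ge \tfrac14\NN>\delta=\NN/\sqrt{n-1}$ for $n$ large, so the interval $[t_{n,N},1-\delta]$ is nondegenerate; were it not, the integral would be nonpositive and there would be nothing to prove.)

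Putting the two bounds together, the left-hand side of the lemma is at most $\gamma e^{-\sqrt{n-1}/2}\cdot 2n\,\NN|D_n|=2\gamma\bigl(n\,e^{-\sqrt{n-1}/2}\bigr)\NN|D_n|$, and since exponential decay dominates any power, $n\,e^{-\sqrt{n-1}/2}=o(n^{-1/2})$; thus the expression is $o(n^{-1/2})\NN|D_n|$ uniformly over $N\ge n^n$, which gives the lemma and is exactly the bound needed to combine with the previous estimate in the proof of Lemma \ref{firstpartlem}. The only genuine obstacle is the one flagged above: extracting the factor $\NN$ — rather than merely $1/n$ — from $\int_{t_{n,N}}^{1-\delta}r^{n-1}\,dr$, which is precisely what makes the bound uniform in the number of facets; everything else is a direct application of Lemma \ref{main_lemma} and of the elementary volume-ratio estimate.
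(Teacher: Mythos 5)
Your proof is correct and follows essentially the same route as the paper's: both use monotonicity of $\alpha_{n,r}$ to reduce to the endpoint $r=1-\delta$, then Lemma \ref{main_lemma} to obtain $\tfrac N2\alpha_{n,1-\delta}=O\bigl(e^{-c\sqrt{n-1}}\bigr)$, and finally the fact that the interval length $1-\delta-t_{n,N}=O(\NN)$ supplies the needed $\NN$ factor so that $n\,e^{-c\sqrt{n-1}}\,\NN|D_n|$ is $o(n^{-1/2})\NN|D_n|$. Your substitution of Bernoulli's inequality $1-(1-x)^{N/2}\le\tfrac N2 x$ for the paper's $1-x\approx e^{-x}$ step, and your explicit remark that the crude bound $\int r^{n-1}\,dr\le 1/n$ would fail to be uniform in $N$, are only stylistic refinements of the same argument.
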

\begin{proof}
	Using Lemma \ref{main_lemma} with $ r = {1-\delta}=1-\left(n-1\right)^{-0.5}N^{-\frac{2}{n-1}},$ we get that
	\begin{align*}
	\alpha_{n,{1-\delta}}&=\frac{2\gamma\left(1+O\left(n^{-1}\right)\right)}{N}e^{\left(n-1\right)\left(r-1\right)N^{\frac{2}{n-1}}\left(1+O\left(n^{-0.5}\right)\right)}\\&=\frac{2\gamma\left(1+O\left(n^{-1}\right)\right)}{N}e^{-\sqrt{n-1}\left(1+O\left(n^{-0.5}\right)\right)}.
	\end{align*}
	Therefore,
	\begin{equation}\begin{aligned} & |\partial D_{n}|\int_{t_{n,N}}^{1-\delta}r^{n-1}\left(1-\left(1-\alpha_{n,r}\right)^{\frac{N}{2}}\right)dr\leq\\
	& |\partial D_{n}|\int_{t_{n,N}}^{1-\delta}r^{n-1}\left(1-\left(1-\alpha_{n,1-\left(n-1\right)^{-0.5}N^{-\frac{2}{n-1}}}\right)^{\frac{N}{2}}\right)dr=\\
	& |\partial D_{n}|\int_{t_{n,N}}^{1-\delta}r^{n-1}\left(1-\left(1-\frac{2\gamma\left(1+O\left(n^{-1}\right)\right)}{N}e^{-\sqrt{n-1}\left(1+O\left(n^{-0.5}\right)\right)}\right)^{\frac{N}{2}}\right)dr\leq\\
	& |\partial D_{n}|\int_{t_{n,N}}^{1-\delta}r^{n-1}\left(1-e^{^{-\gamma\left(1+O\left(n^{-1}\right)\right)e^{-\sqrt{n-1}\left(1+O\left(n^{-0.5}\right)\right)}}}\right)dr\leq\\
	& C|\partial D_{n}|(1-\delta-t_{n,N})e^{-\gamma\sqrt{n-1}}=|D_{n}|\NN o\left(n^{-0.5}\right).
	\end{aligned}
	\end{equation}
\end{proof}

\section{Proof of Theorem \ref{sec_thm}}
Recall that we want to find an upper bound for  $ \Delta_s\left(Q^b_{n,N}, D_n\right)$, where $ Q^b_{n,N} $ is a polytope in $ \R^n $ with at most $ N $ facets that minimizes the surface area deviation with the Euclidean ball.

For this purpose, choose a polytope $P$ from the random construction that was used in Theorem \ref{main_thm} which satisfies both: 
\[
|D_n \setminus P| \leq \left(\int_{0}^{1} t^{-1}(1-e^{-\ln(2)t})dt + O(n^{-0.5})\right)\NN|D_n|
\]
and
\[
|P \setminus D_n| \leq \left( \int_{0}^{\infty}e^{-\ln(2)e^{t}}dt + O(n^{-0.5})\right)\NN|D_n|.
\]
%

First, we find a lower bound for $ |\partial\left(D_n \cap P\right)|.$
\begin{lemma}{\label{lem1}}
	\be
	|\partial\left(P\cap D_n\right)| \geq  \left(1  - \left(\int_{0}^{1} t^{-1}(1-e^{-\ln(2)t})dt\right)\NN\right)|\partial D_n|.
	\ee
\end{lemma}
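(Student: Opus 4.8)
The plan is to obtain the lower bound on $|\partial(P\cap D_n)|$ directly from the sharp isoperimetric inequality (Theorem~\ref{isoperemetric}), using only the volume deficit that is already built into the choice of $P$ in the proof of Theorem~\ref{sec_thm}: morally, a convex body whose volume is close to $|D_n|$ is forced to have surface area close to $|\partial D_n|$, and this is exactly what the isoperimetric inequality quantifies.

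First I would verify that $K:=P\cap D_n$ is a convex body, so that Theorem~\ref{isoperemetric} is applicable: $K$ is closed and bounded (it is contained in $D_n$), convex (an intersection of convex sets), and has nonempty interior, since $0$ lies in the interior of $D_n$ and in the interior of each slab $\{x:\inner{x}{y_i}\le t_{n,N}\}$ because $0<t_{n,N}<1$. Next I would record the volume of $K$. By the choice of $P$ we have $|D_n\setminus P|\le\bigl(I+O(n^{-0.5})\bigr)\NN\,|D_n|$, where $I:=\int_{0}^{1}t^{-1}(1-e^{-\ln(2)t})\,dt$, and hence
\[
|K| \;=\; |D_n|-|D_n\setminus P| \;\ge\; \bigl(1-(I+O(n^{-0.5}))\NN\bigr)|D_n|.
\]

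Then I would apply Theorem~\ref{isoperemetric} to $K$, giving $|\partial K|\ge n\,|K|^{\frac{n-1}{n}}|D_n|^{\frac1n}$, substitute the volume bound, use $n|D_n|=|\partial D_n|$, and invoke the elementary inequality $(1-x)^{\frac{n-1}{n}}\ge 1-\frac{n-1}{n}x\ge 1-x$ valid for $x\in[0,1]$ (Bernoulli, legitimate since $\tfrac{n-1}{n}\in[0,1]$ and $(I+O(n^{-0.5}))\NN\in[0,1]$ for $n$ large). This yields
\[
|\partial(P\cap D_n)| \;\ge\; n|D_n|^{\frac1n}\Bigl((1-(I+O(n^{-0.5}))\NN)\,|D_n|\Bigr)^{\frac{n-1}{n}} \;=\; |\partial D_n|\,(1-(I+O(n^{-0.5}))\NN)^{\frac{n-1}{n}} \;\ge\; \bigl(1-(I+O(n^{-0.5}))\NN\bigr)|\partial D_n|,
\]
which is the asserted inequality up to the lower-order term $O(n^{-0.5})\NN\,|\partial D_n|$; this term is harmless, being absorbed into the $O(n^{-0.5})$ error in the statement of Theorem~\ref{sec_thm}.

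I do not expect a genuine obstacle: the whole content of the lemma is the implication "small volume deficit $\Rightarrow$ small surface-area deficit," which the sharp isoperimetric inequality provides for free. The only points that require a sentence of care are that $P\cap D_n$ is a full-dimensional convex body, so that Theorem~\ref{isoperemetric} may legitimately be invoked, and the routine bookkeeping of the $O(n^{-0.5}\NN)$ error term against the clean constant $I$ written in the statement.
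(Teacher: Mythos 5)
Your proposal is correct and follows essentially the same route as the paper: lower-bound $|P\cap D_n|$ using the volume deficit built into the choice of $P$, apply the isoperimetric inequality (Theorem~\ref{isoperemetric}), and use $n|D_n|=|\partial D_n|$ together with $(1-x)^{\frac{n-1}{n}}\ge 1-x$ to conclude. Your explicit remark that the resulting bound carries an extra $O(n^{-0.5})\NN$ term, absorbed into the error in Theorem~\ref{sec_thm}, is consistent with what the paper's own proof actually delivers.
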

\begin{proof}
	 By definition, $ P $ satisfies the inequality
	\[|P\cap D_n| \geq \left(1  -\left(\int_{0}^{1} t^{-1}(1-e^{-\ln(2)t})dt + O(n^{-0.5})\right)\NN\right)|D_n|,\] and by the isoperimetric inequality (Lemma \ref{isoperemetric}) 
	\begin{equation*}
	\begin{aligned}
	|\partial\left(P\cap D_n\right)| &\geq n|P\cap D_n|^{\frac{n-1}{n}}|D_n|^{\frac {{1}}{n}}\\&\geq \left(\left(\int_{0}^{1} t^{-1}(1-e^{-\ln(2)t})dt  + O(n^{-0.5})\right)\NN\right)|\partial D_n|.
	\end{aligned}
	\end{equation*}
	 The lemma follows.
\end{proof}
Finally, we prove an upper bound for $ |\partial\left(D_n\cup P\right)| $.
\begin{lemma}{\label{lem2}}
	\be
	\begin{aligned} & |\partial\left(P\cup D_{n}\right)|\leq \\&\left(1+ \left(\int_{0}^{1} t^{-1}(1-e^{-\ln(2)t})dt + \int_{0}^{\infty}e^{-\ln(2)e^{t}}dt + \frac{1}{2} +O(n^{-0.5})\right)\NN\right)|\partial D_{n}|.
	\end{aligned}
	\ee
\end{lemma}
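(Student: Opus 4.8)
The plan is to avoid estimating $|\partial(P\cup D_n)|$ directly and instead to use a valuation-type identity for the surface area of the union and the intersection of two bodies, together with the lower bound on $|\partial(P\cap D_n)|$ already supplied by Lemma \ref{lem1}. First I would decompose $\partial(P\cup D_n)$ and $\partial(P\cap D_n)$ into the parts lying on $\partial P$ and the parts lying on $\partial D_n$:
\[
\partial(P\cup D_n)=\big(\partial P\setminus\operatorname{int} D_n\big)\cup\big(\partial D_n\setminus\operatorname{int} P\big),\qquad
\partial(P\cap D_n)=\big(\partial P\cap\operatorname{int} D_n\big)\cup\big(\partial D_n\cap\operatorname{int} P\big),
\]
where in each union the two parts meet only in $\partial P\cap\partial D_n$, a set of dimension at most $n-2$ and hence $(n-1)$-null. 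Adding the four contributions gives the identity
\[
|\partial(P\cup D_n)|+|\partial(P\cap D_n)|=|\partial P|+|\partial D_n|,
\]
so it is enough to bound $|\partial P|$ from above.

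Next I would express $|\partial P|$ through the volume of $P$. Every facet of $P$ lies in a hyperplane $\{\langle x,y_i\rangle=\pm t_{n,N}\}$, hence at distance exactly $t_{n,N}$ from the origin, and $0\in\operatorname{int} P$; therefore the outer unit normal $\nu(x)$ satisfies $\langle x,\nu(x)\rangle=t_{n,N}$ for almost every $x\in\partial P$. Applying the divergence theorem to the field $x\mapsto x$ gives $n|P|=\int_{\partial P}\langle x,\nu(x)\rangle=t_{n,N}|\partial P|$, that is, $|\partial P|=\tfrac{n}{t_{n,N}}|P|$.

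Finally I would substitute the bounds already in hand. For the realization $P$ fixed at the start of this section, $|P|=|P\cap D_n|+|P\setminus D_n|\le\big(1+(II+O(n^{-0.5}))\NN\big)|D_n|$; by Lemma \ref{lem1}, $|\partial(P\cap D_n)|\ge\big(1-(I+O(n^{-0.5}))\NN\big)|\partial D_n|$; and, using $|\partial D_n|/|D_{n-1}|=\Theta(\sqrt n)$ (first auxiliary lemma), the choice $t_{n,N}=\sqrt{1-(\gamma|\partial D_n|/(N|D_{n-1}|))^{2/(n-1)}}$ gives $1-t_{n,N}^2=(1+o(1))\NN$ and hence $t_{n,N}^{-1}=1+(\tfrac12+O(n^{-0.5}))\NN$. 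Combining these, and using $n|D_n|=|\partial D_n|$,
\[
|\partial(P\cup D_n)|=|\partial D_n|+\frac{n}{t_{n,N}}|P|-|\partial(P\cap D_n)|\le\Big(1+\big(I+II+\tfrac12+O(n^{-0.5})\big)\NN\Big)|\partial D_n|,
\]
the cross term $\NN\cdot\NN$ being absorbed into $O(n^{-0.5})$.

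The step I expect to require the most care is the identity in the first paragraph: it relies on $P$ and $D_n$ being in general position, i.e. that $\partial P\cap\partial D_n$ is $(n-1)$-null (equivalently, no facet hyperplane of $P$ is tangent to $\mathbb S^{n-1}$) and that no facet of $P$ shares an $(n-1)$-dimensional piece with $\partial D_n$. Both hold almost surely for the random construction, so the realization fixed at the beginning of the section may be assumed to satisfy them; and if not, rotating one slab by an arbitrarily small angle restores general position while changing every quantity above by an arbitrarily small amount. Everything else reduces to the two substitutions above.
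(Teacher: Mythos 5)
Your proposal is correct and reaches the stated constant, but it follows a genuinely different route from the paper. The paper never invokes the identity $|\partial(P\cup D_n)|+|\partial(P\cap D_n)|=|\partial P|+|\partial D_n|$ nor Lemma \ref{lem1}: instead it applies the cone--volume formula directly to the union, writing $|D_n\cup P|=\frac{t_{n,N}}{n}|\partial P\cap D_n^c|+\frac1n|\partial D_n\cap P^c|\ge\frac{t_{n,N}}{n}|\partial(P\cup D_n)|$ (all polytopal boundary pieces lie at height $t_{n,N}$, the spherical ones at height $1$), and then combines this with the crude bound $|P\cup D_n|\le\big(1+(I+II+O(n^{-0.5}))\NN\big)|D_n|$ coming from the symmetric difference; the factor $\frac1{t_{n,N}}=1+(\frac12+O(n^{-0.5}))\NN$ produces the $\frac12$. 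You instead compute $|\partial P|=\frac{n}{t_{n,N}}|P|$ exactly (divergence/cone formula for $P$ alone), use the sharper volume bound $|P|\le\big(1+(II+O(n^{-0.5}))\NN\big)|D_n|$, and recover the $I$-term by subtracting the lower bound on $|\partial(P\cap D_n)|$ from Lemma \ref{lem1}; the arithmetic then matches the lemma, with the cross terms of order $N^{-4/(n-1)}\le n^{-2}\NN$ harmlessly absorbed. What each buys: the paper's argument is self-contained (Lemma \ref{lem2} does not depend on Lemma \ref{lem1}, which matters only cosmetically since Theorem \ref{sec_thm} uses both anyway), while yours isolates cleanly where each constant comes from ($II+\frac12$ from $|\partial P|$, $I$ from the intersection boundary) via an exact valuation identity rather than a one-sided cone estimate. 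One small simplification: your general-position caveat is unnecessary, since $\partial P$ is contained in finitely many hyperplanes and every hyperplane meets $\mathbb S^{n-1}$ in a set of $\mathcal H^{n-1}$-measure zero, so $\mathcal H^{n-1}(\partial P\cap\partial D_n)=0$ holds for every realization, not just almost surely; the only genuinely probabilistic facts you need (boundedness of $P$ and all $N$ facets lying at distance $t_{n,N}$) are already guaranteed almost surely by the construction, so the chosen realization can be assumed to have them.
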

\begin{proof}
	By the definition of the symmetric volume difference, $ P $ satisfies the inequality
	\begin{equation}{\label{aa}}
	|P \cup D_n| \leq \left(1+ \left(\int_{0}^{1} t^{-1}(1-e^{-\ln(2)t})dt + \int_{0}^{\infty}e^{-\ln(2)e^{t}}dt + O(n^{-0.5})\right)\NN\right)|D_n|.
	\end{equation}
	By volume considerations, we notice that the origin is in the interior of $ P$. Hence, by the cone-volume formula,
	\begin{equation}{\label{bb}}
	\begin{aligned}
	|D_{n}\cup P|&= |\text{conv}(\vec{0},\partial P \cap D^c_{n})| + |\text{conv}(\vec{0},\partial D_{n}\cap P^c)| \\&= \frac{t_{n,N}}{n}|\partial P \cap D^c_{n}|+{\frac {{1}}{n}}|\partial D_{n}\cap P^c|,
	\end{aligned}
	\end{equation}
	where in the last equality we used the fact all the facets have the same height $ t_{n,N}.$  
	Now we use both Eqs. \eqref{aa} and \eqref{bb}  to derive that
	\begin{align*}
	&\frac {t_{n,N}}{n}|\partial\left(P \cup D_n\right)| \leq 
	\\&\left(1+ \left(\int_{0}^{1} t^{-1}(1-e^{-\ln(2)t})dt + \int_{0}^{\infty}e^{-\ln(2)e^{t}}dt + O(n^{-0.5})\right)\NN\right)|D_n|.
	\end{align*}
	Since $ t_{n,N}= {1-\frac 12\left(1+O\left(n^{-0.5}\right)\right)\NN},$ the lemma follows.
\end{proof}
\begin{proof}[Proof of Theorem \ref{sec_thm}]
The theorem now follows by using Lemmas \ref{lem1} and \ref{lem2} and the definition of the surface area deviation:
\be
\begin{aligned}
\Delta_s\left(D_n,P\right)&=|\partial\left(P\cup D_n\right)|-|\partial\left(P\cap D_n\right)|\\&\leq \left(2\int_{0}^{1} t^{-1}(1-e^{-\ln(2)t})dt + \int_{0}^{\infty}e^{-\ln(2)e^{t}}dt + \frac{1}{2} + O(n^{-0.5})\right)\NN|\partial D_n|.
\end{aligned}
\ee
\end{proof}
\section{Proof of Theorem \ref{suprisig}}
Let $ P^b_{n,N} $ be the polytope in $ \R^n $ with at most $ N $ facets that minimizes the symmetric volume difference with the $n$-dimensional Euclidean unit ball. In Theorem 2 of \cite{Lud06}, it was shown that
\begin{align*}
|D_n\setminus P_{n,N}| \geq \frac 1{n}\sum_{i=1}^{N}|F_i\cap D_n|\sqrt{1-(F_i\cap D_n)^{\frac{2}{n-1}}},
\end{align*}
where $F_1,\ldots,F_N$ denote the facets of $ P_{n,N}.$ By Lemma 9 in \cite{Lud06}, each facet of $ P^b_{n,N} $ satisfies
\[
|F_i \cap D_n| = |F_i \cap D^c_n|.
\]
We define $ \sqrt{1-r_i^2}$ to be the height such that $|D_n \cap \{x_1=\sqrt{1-r_i^2}\}| = |F_i|.$ From this definition, we know that $ d(o,F_i) > \sqrt{1-r_i^2} $ and $ |F_i \cap D_n| = \frac{1}{2}|F_i| = r_i^{n-1}|D_{n-1}|.$  Thus

\begin{align}\label{target}
|D_n\setminus P_{n,N}| \geq \frac {|D_{n-1}|}{2n}\sum_{i=1}^{N}r_i^{n-1}\left(1-\sqrt{1-r_{i}^{2}}\right).
\end{align} 
We formulate an optimization problem, whose target function is smaller than the right-hand side of Eq. \eqref{target} and the constraint is the surface area of our polytope,
\[
\min\left\{f\left(r_1,\ldots,r_N\right) :\ |D_{n-1}|\sum_{i=1}^{N}r_{i}^{n-1}=|\partial P^b_{n,N}| \ ,0\leq r_{i}\leq 1  ,\forall i\in1,\ldots,N\right\},
\]
where
\[
f\left(r_1,\ldots,r_N\right) = \frac{|D_{n-1}|}{2n}\sum_{i=1}^{N}r_{i}^{n-1}\left(1-\sqrt{1-r_{i}^{2}}\right).
\]
Using  Lagrange multipliers and the separability of both $ f$ and the constraints, we derive that the minimum is achieved at the point 
\[
r^*_1=\cdots=r^*_N = \left(\frac{|\partial P|}{|D_{n-1}|N}\right)^{\frac 1{n-1}}. 
\]
We conclude that
\begin{equation}
\begin{aligned}\Delta_{v}(P_{n,N}^{b},D_{n}) & \geq f\left(r_{1}^{*},\ldots,r_{N}^{*}\right)=\frac{|D_{n-1}|}{2n}\sum_{i=1}^{N}\frac{|\partial P_{n,N}^{b}|}{N|D_{n-1}|}\left(1-\sqrt{1-\left(\frac{|\partial P_{n,N}^{b}|}{|D_{n-1}|N}\right)^{\frac{2}{n-1}}}\right)\\
& =\frac{|\partial P|}{2n}\left(1-\sqrt{1-\left(\frac{|\partial P_{n,N}^{b}|}{|D_{n-1}|N}\right)^{\frac{2}{n-1}}}\right)\\
& \geq(\frac{1}{2}-c\NN)|D_{n}|\left(1-\sqrt{1-\left(\frac{|\partial P_{n,N}^{b}|}{|D_{n-1}|N}\right)^{\frac{2}{n-1}}}\right)\\
& \geq\bigg(\frac{1}{4}-c\NN+O(n^{2}N^{-\frac{4}{n-1}})\bigg)\NN|D_{n}|,
\end{aligned}
\end{equation}
where we used the isoperimetric inequality (Lemma \ref{isoperemetric}), Theorem \ref{main_thm} (which implies $ |\partial P| \geq (1-c\NN) |\partial D_n|$) and $ \sqrt{1-x} = 1-\frac{1}{2}x+O(x^2). $ Hence, by taking $ N \to \infty $ 
\[
\frac 12\text{ldiv}_{n-1}|\partial D_n|^{1+\frac{2}{n-1}} \geq \frac{1}{4}|D_{n}|
\]
so by Stirling's inequality we obtain $ \text{ldiv}_{n-1} \geq (4\pi e)^{-1} + o(1)$, as desired.
\qed

\section*{ACKNOWLEDGMENTS}
I would like to express my sincerest gratitude to Prof. Bo'az Klartag for the inspiring discussions, and also to Prof. Gideon Schechtman and Dr. Ronen Eldan. 
Also I express my gratitude to my friend Prof. Steven Hoehner and Ms. Anna Mendelman for editing the content of this paper.

\section{Technical lemmas and loose ends}{\label{Techandloose}}
Recall that
\[ 
\alpha_{n,r} = \frac{2|D_{n-1}|}{|D_n|}\left(\int_{\frac{t_{n,N}}{r}}^{1}\left(1-x^2\right)^{\frac{n-1}{2}}dx  +\frac {t_{n,N}}{nr} \left(1-\frac{t_{n,N}^2}{r^2}\right)^{\frac{n-1}{2}}\right) 
\]
where $t_{n,N} = \sqrt{1 - \left(\frac{\gamma |\partial D_n|}{N|D_{n-1}|}\right)^{\frac 2{n-1}}}$.  The integral is the volume of the cap, and the second term is the volume of the cone whose common base is $ \{x\in\R^n: x_1 = \frac {t_{n,N}}r\} \cap D_n.$ 
When $N\geq n^n$, $ t_{n,N} $ is very close to 1. When $ r $ is close to 1, the volume of the cone is significantly larger than the volume of the cap. The following lemma formalizes this.
\begin{lemma}{\label{sub_main_lemma}}
	Assume that $ r \in [1-\frac{\NN}{\sqrt{n-1}}, 1+\frac{\NN}{\sqrt{n-1}}]$. Then for all $N\geq n^n$,
	\be
	\int_{\frac{t_{n,N}}{r}}^{1}\left(1-x^2\right)^{\frac{n-1}{2}}dx \leq \frac{C}{n^2}\left(\frac {t_{n,N}}{nr} \left(1-\frac{t_{n,N}^2}{r^2}\right)^{\frac{n-1}{2}}\right).
	\ee
\end{lemma}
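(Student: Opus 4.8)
The plan is to rescale by setting $a:=t_{n,N}/r$, so that the right-hand side of the claimed inequality becomes $\tfrac{a}{n}(1-a^{2})^{\frac{n-1}{2}}$ and the statement reduces to
\[
\int_{a}^{1}(1-x^{2})^{\frac{n-1}{2}}\,dx \;\le\; \frac{C}{n^{2}}\cdot\frac{a}{n}(1-a^{2})^{\frac{n-1}{2}}.
\]
First I would pin down the relevant orders of magnitude. Since $|\partial D_{n}|=n|D_{n}|$ and $|D_{n}|/|D_{n-1}|$ is of order $n^{-1/2}$ by Lemma~1.1, the quantity $1-t_{n,N}^{2}=\big(\gamma|\partial D_{n}|/(N|D_{n-1}|)\big)^{\frac{2}{n-1}}$ is of order $\NN$, because the dimensional factor $\big(\gamma\, n|D_{n}|/|D_{n-1}|\big)^{\frac{2}{n-1}}$ is bounded above and below uniformly in $n$. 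The hypothesis $N\ge n^{n}$ then forces $\NN\le n^{-\frac{2n}{n-1}}\le n^{-2}$, which is exactly the source of the claimed gain. Combined with the hypothesis $|r-1|\le \NN/\sqrt{n-1}$, this gives $a\in(0,1)$ with $a$ bounded away from $0$ (say $a\ge\tfrac14$) and
\[
1-a^{2}=\frac{r^{2}-t_{n,N}^{2}}{r^{2}}\le 4\big(|r^{2}-1|+(1-t_{n,N}^{2})\big)\le C'\NN\le C' n^{-2}
\]
for an absolute constant $C'$.

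Next I would estimate the cap integral pointwise. For $x\in[a,1]$, write $1-x^{2}=(1-a^{2})\big(1-\tfrac{x^{2}-a^{2}}{1-a^{2}}\big)$ and use $1-u\le e^{-u}$ together with $x^{2}-a^{2}=(x-a)(x+a)\ge 2a(x-a)$ to obtain the bound
\[
(1-x^{2})^{\frac{n-1}{2}}\le (1-a^{2})^{\frac{n-1}{2}}\exp\!\Big(-\tfrac{(n-1)a}{1-a^{2}}(x-a)\Big).
\]
Integrating over $[a,1]$ and enlarging the range of integration to $[a,\infty)$ yields
\[
\int_{a}^{1}(1-x^{2})^{\frac{n-1}{2}}\,dx\le (1-a^{2})^{\frac{n-1}{2}}\cdot\frac{1-a^{2}}{(n-1)a}.
\]

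Finally I would divide by the cone term $\tfrac{a}{n}(1-a^{2})^{\frac{n-1}{2}}$: the ratio is $\tfrac{n(1-a^{2})}{(n-1)a^{2}}$, and since $a\ge\tfrac14$ and $1-a^{2}\le C'n^{-2}$ this is at most $32C'n^{-2}$; taking $C=32C'$ proves the lemma with an absolute constant. The computation is a routine one-variable estimate, and the only delicate point is the first step: one must verify that $1-t_{n,N}^{2}$ is genuinely of order $\NN$ (so that the hypothesis $N\ge n^{n}$ really converts into the factor $n^{-2}$), and that $a$ stays strictly inside $(0,1)$ and bounded away from $0$, which uses both the smallness of $1-t_{n,N}^{2}$ and the hypothesis that $r$ lies within $\NN/\sqrt{n-1}$ of $1$.
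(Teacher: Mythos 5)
Your argument is correct, and it is essentially the same Laplace-type estimate as the paper's: both proofs reduce the cap integral to its value at the endpoint $a = t_{n,N}/r$ multiplied by a decay length of order $\tfrac{1-a^2}{(n-1)a}$, and both get the extra $n^{-2}$ factor from the observation that $N\ge n^n$ forces $1-a^2 = O(\NN)=O(n^{-2})$. The one cosmetic difference is in how the decay length is produced: the paper factors $(1-x^2)^{(n-1)/2}=(1+x)^{(n-1)/2}(1-x)^{(n-1)/2}$, bounds $(1+x)\le 2$, integrates $(1-x)^{(n-1)/2}$ exactly, and then reinstates the $2^{(n-1)/2}$ by noting $(1+t_{n,N}/r)^{(n-1)/2}$ is comparable to it; you instead factor out $(1-a^2)^{(n-1)/2}$, use $1-u\le e^{-u}$ together with $x^2-a^2\ge 2a(x-a)$ to bound the integrand by a decaying exponential, and integrate over $[a,\infty)$. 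Your version avoids the paper's factor-out-and-recombine step for the $2^{(n-1)/2}$ term and is a bit cleaner, but the content is the same.
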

\begin{proof} Observe that  $ \NN = O(n^{-2}),$ which implies that  $ \frac {t_{n,N}}r = 1-O\left(n^{-2}\right)$. Hence, 
	\begin{equation}
\begin{aligned}\int_{\frac{t_{n,N}}{r}}^{1}\left(1-x^{2}\right)^{\frac{n-1}{2}}dx & =\int_{\frac{t_{n,N}}{r}}^{1}(1+x)^{\frac{n-1}{2}}\left(1-x\right)^{\frac{n-1}{2}}dx\\
& \leq2^{\frac{n-1}{2}}\int_{t_{n,N}}^{1-\frac{t_{n,N}}{r}}x^{\frac{n-1}{2}}dx=\frac{2^{\frac{n+1}{2}}}{n-1}\left(1-\frac{t_{n,N}}{r}\right)^{\frac{n+1}{2}}\\
& \leq\frac{C}{n^{3}}2^{\frac{n+1}{2}}\left(1-\frac{t_{n,N}}{r}\right)^{\frac{n-1}{2}}\leq\frac{C}{2n^{3}}\left(1+\frac{t_{n,N}}{r}\right)^{\frac{n-1}{2}}\left(1-\frac{t_{n,N}}{r}\right)^{\frac{n-1}{2}}\\
& \leq\frac{C}{n^{2}}\frac{t_{n,N}}{nr}\left(1-\left(\frac{t_{n,N}}{r}\right)^{2}\right)^{\frac{n-1}{2}}.
\end{aligned}
	\end{equation}
\end{proof}	

Now we can complete all the missing details from the proof of Theorem \ref{main_thm}.
First we prove Lemma \ref{main_lemma}.
\begin{lemmaa}
	Assume that $ r \in [1-\frac{\NN}{\sqrt{n-1}}, 1+\frac{\NN}{\sqrt{n-1}}].$ Then it holds that
	\begin{equation}
	\alpha_{n,r}=\frac{2\gamma\left(1+O\left(n^{-1}\right)\right)}{N}e^{\left(n-1\right)\left(r-1\right)N^{\frac{2}{n-1}}\left(1+O\left(n^{-0.5}\right)\right)}
	\end{equation}
	\begin{proof} Using Lemma \ref{sub_main_lemma} and the fact that both $ t_{n,N}$ and $r $ are of the order $ 1-O\left(n^{-2}\right)$, we derive that
		\begin{equation*}
\begin{aligned}\alpha_{n,r} & =\left(1+O\left(n^{-1}\right)\right)\frac{2|D_{n-1}|}{|D_{n}|}\left(\frac{t_{n,N}}{nr}\right)\left(1-\frac{t_{n,N}^{2}}{\left(1+\left(r-1\right)\right)^{2}}\right)^{\frac{n-1}{2}}\\
& =\left(1+O\left(n^{-1}\right)\right)\frac{2|D_{n-1}|}{|\partial D_{n}|}\frac{1}{\left(1+\left(r-1\right)\right)^{n-1}}\left(\left(1+\left(r-1\right)\right)^{2}-t_{n,N}^{2}\right)^{\frac{n-1}{2}}\\
& =\left(1+O\left(n^{-1}\right)\right)\frac{2|D_{n-1}|}{|\partial D_{n}|}\left(1-t_{n,N}^{2}+2\left(r-1\right)+\left(r-1\right)^{2}\right)^{\frac{n-1}{2}}\\
& =\left(1+O\left(n^{-1}\right)\right)\frac{2|D_{n-1}|}{|\partial D_{n}|}\left(\left(\frac{\gamma|\partial D_{n}|}{|D_{n-1}|N}\right)^{\frac{2}{n-1}}+2\left(r-1\right)+\left(r-1\right)^{2}\right)^{\frac{n-1}{2}}\\
& =\frac{2\gamma\left(1+O\left(n^{-1}\right)\right)}{N}\left(1+2\left(r-1\right)N^{\frac{2}{n-1}}(1+O(\frac{ln\left(n\right)}{n}))\right)^{\frac{n-1}{2}}\\
& =\frac{2\gamma\left(1+O\left(n^{-1}\right)\right)}{N}e^{\left(n-1\right)\left(r-1\right)N^{\frac{2}{n-1}}\left(1+O\left(n^{-0.5}\right)\right)}.
\end{aligned}
		\end{equation*}
	\end{proof}
\end{lemmaa}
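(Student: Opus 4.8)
I would prove this by reducing the exact integral representation of $\alpha_{n,r}$ to its cone term and then linearizing. Starting from
\[
\alpha_{n,r} = \frac{2|D_{n-1}|}{|D_n|}\left(\int_{\frac{t_{n,N}}{r}}^{1}\left(1-x^2\right)^{\frac{n-1}{2}}dx + \frac{t_{n,N}}{nr}\left(1-\frac{t_{n,N}^2}{r^2}\right)^{\frac{n-1}{2}}\right),
\]
recalled at the start of this section, the first move is to invoke Lemma \ref{sub_main_lemma}, which bounds the cap integral by $Cn^{-2}$ times the cone term, and so lets me discard the cap. Using $|\partial D_n| = n|D_n|$, writing $\left(1-\frac{t_{n,N}^2}{r^2}\right)^{\frac{n-1}{2}} = r^{-(n-1)}\left(r^2-t_{n,N}^2\right)^{\frac{n-1}{2}}$, and noting that on the window $|r-1|\leq \NN/\sqrt{n-1}$ one has $r = 1+O(n^{-2})$ while the hypothesis $N\geq n^n$ forces $t_{n,N}^2 = 1-\left(\gamma|\partial D_n|/(N|D_{n-1}|)\right)^{2/(n-1)} = 1-O(n^{-2})$, the factors $t_{n,N}/r$ and $r^{-(n-1)}$ are both $1+O(n^{-1})$, hence
\[
\alpha_{n,r} = \left(1+O(n^{-1})\right)\frac{2|D_{n-1}|}{|\partial D_n|}\left(r^2-t_{n,N}^2\right)^{\frac{n-1}{2}}.
\]

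Next I would perform the algebra that manufactures the exponential. Writing $r = 1+(r-1)$ gives $r^2-t_{n,N}^2 = (1-t_{n,N}^2)+2(r-1)+(r-1)^2$, and the definition of $t_{n,N}$ gives $1-t_{n,N}^2 = \left(\gamma|\partial D_n|/(N|D_{n-1}|)\right)^{2/(n-1)}$; factoring this quantity out of the bracket, raising to the power $\frac{n-1}{2}$ (which undoes the exponent $\frac{2}{n-1}$), and cancelling $|D_{n-1}|$ and $|\partial D_n|$ against the prefactor yields
\[
\alpha_{n,r} = \frac{2\gamma\left(1+O(n^{-1})\right)}{N}\left(1+\left(2(r-1)+(r-1)^2\right)\left(\frac{N|D_{n-1}|}{\gamma|\partial D_n|}\right)^{\frac{2}{n-1}}\right)^{\frac{n-1}{2}}.
\]
Now $\left(\frac{N|D_{n-1}|}{\gamma|\partial D_n|}\right)^{\frac{2}{n-1}} = N^{\frac{2}{n-1}}\left(\frac{|D_{n-1}|}{\gamma|\partial D_n|}\right)^{\frac{2}{n-1}}$, and since $|\partial D_n| = n|D_n|$ the first (volume-ratio) lemma of the paper makes $|D_{n-1}|/|\partial D_n|$ of order $n^{-1/2}$, so $\left(\frac{|D_{n-1}|}{\gamma|\partial D_n|}\right)^{\frac{2}{n-1}} = \exp\!\left(\frac{2}{n-1}\left(-\tfrac12\log n + O(1)\right)\right) = 1+O\!\left(\frac{\log n}{n}\right)$. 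Also $|r-1| = O(n^{-2}) = o(1)$, so $2(r-1)+(r-1)^2 = 2(r-1)(1+o(1))$, and collecting everything gives
\[
\alpha_{n,r} = \frac{2\gamma\left(1+O(n^{-1})\right)}{N}\left(1+2(r-1)N^{\frac{2}{n-1}}\left(1+O\!\left(\tfrac{\log n}{n}\right)\right)\right)^{\frac{n-1}{2}}.
\]

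Finally I would linearize. Put $u := 2(r-1)N^{2/(n-1)}\left(1+O(\tfrac{\log n}{n})\right)$; on the window $|u|\leq 2\,\NN N^{2/(n-1)}(n-1)^{-1/2}(1+o(1)) = 2(n-1)^{-1/2}(1+o(1)) = O(n^{-1/2})$, using $\NN N^{2/(n-1)} = 1$, so $\log(1+u) = u+O(u^2) = u\left(1+O(n^{-1/2})\right)$ and
\[
\left(1+u\right)^{\frac{n-1}{2}} = \exp\!\left(\frac{n-1}{2}\log(1+u)\right) = \exp\!\left((n-1)(r-1)N^{\frac{2}{n-1}}\left(1+O(n^{-1/2})\right)\right),
\]
which is exactly the asserted identity.

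The main obstacle is purely the error bookkeeping at this last step, and it is precisely why the window has width $\NN/\sqrt{n-1}$: there the main exponent $(n-1)(r-1)N^{2/(n-1)}$ is only $O(\sqrt n)$ in modulus, so the quadratic remainder $\frac{n-1}{2}O(u^2) = O(nu^2) = O(1)$ does \emph{not} vanish on its own; one must instead observe that, since $u = O(n^{-1/2})$, this remainder is $O(n^{-1/2})$ \emph{relative} to the main term $\frac{n-1}{2}u$, which is what lets it be folded into the factor $1+O(n^{-1/2})$. Likewise one has to check that each auxiliary factor — $t_{n,N}/r$, $r^{-(n-1)}$, $\left(|D_{n-1}|/|\partial D_n|\right)^{2/(n-1)}$, and the $(r-1)^2$ correction — enters only at the multiplicative level $1+O(n^{-1})$ or $1+O(\log n/n)$; this is where Lemma \ref{sub_main_lemma}, the volume-ratio lemma, and the hypothesis $N\geq n^n$ (through $\NN = O(n^{-2})$ and $t_{n,N} = 1-O(n^{-2})$) are each used.
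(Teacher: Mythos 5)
Your proposal is correct and follows essentially the same route as the paper's proof: discard the cap via Lemma \ref{sub_main_lemma}, use $|\partial D_n|=n|D_n|$ to pass to $\frac{2|D_{n-1}|}{|\partial D_n|}\left(r^2-t_{n,N}^2\right)^{\frac{n-1}{2}}$, factor out $1-t_{n,N}^2$ so that the prefactor becomes $\frac{2\gamma}{N}$, absorb $\left(|D_{n-1}|/(\gamma|\partial D_n|)\right)^{\frac{2}{n-1}}=1+O(\log n/n)$, and exponentiate on the window where the argument is $O(n^{-1/2})$. Your final paragraph in fact makes explicit the error bookkeeping (the relative, not absolute, smallness of the quadratic remainder) that the paper passes over silently between its last two displayed lines.
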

The following is proof the of Lemma \ref{tech_lemma}.
\begin{lemmaa}
For all $r \geq n^2$, it holds that
	\be
	\alpha_{n,r} \geq 1-\frac{C\sqrt{n}}{r}	.
	\ee
\end{lemmaa}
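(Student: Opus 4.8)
The plan is to bound $\alpha_{n,r}$ from below by keeping only the cone term in the defining formula for $\alpha_{n,r}$, discarding the (nonnegative) cap integral. That is, I would start from
\[
\alpha_{n,r} \geq \frac{2|D_{n-1}|}{|D_n|}\cdot\frac{t_{n,N}}{nr}\left(1-\frac{t_{n,N}^2}{r^2}\right)^{\frac{n-1}{2}}.
\]
Since $r\geq n^2$ and $t_{n,N}\leq 1$, the ratio $t_{n,N}/r$ is tiny, so $t_{n,N}^2/r^2 \leq 1/r^2 = O(n^{-4})$, and hence $\left(1-t_{n,N}^2/r^2\right)^{\frac{n-1}{2}} = 1 - O(n\cdot r^{-2}) = 1 - o(1)$, in particular it is bounded below by an absolute constant (say $\geq 1/2$) for $n$ large. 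Also $t_{n,N} = 1 - O(n^{-2}) \geq 1/2$. Using Lemma 6.1 (the two-sided bound $|D_n|/|D_{n-1}| \asymp n^{-1/2}$), we get $2|D_{n-1}|/|D_n| \geq c\sqrt{n}$ for an absolute constant $c$. Combining, $\alpha_{n,r} \geq c'\sqrt{n}/(nr) = c'/(\sqrt{n}\,r)$, which is a lower bound of order $1/(\sqrt n r)$ — but this is the \emph{wrong direction}: we want $\alpha_{n,r} \geq 1 - C\sqrt{n}/r$, which says $\alpha_{n,r}$ is close to $1$, not close to $0$.

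So the cone term alone is not enough, and the cap term must be the dominant contribution when $r$ is large. The right approach is: for $r\geq n^2$, the slab of half-width $t_{n,N} \approx 1$ seen from a point at distance $r$ cuts off \emph{almost all} of the directions $y\in\mathbb S^{n-1}$; equivalently $1-\alpha_{n,r}$ is the probability that $\langle x/\|x\|, y\rangle \leq t_{n,N}/r$, and since $t_{n,N}/r \leq 1/n^2$ is very small, this is the measure of a band around the equator of width $\approx 2t_{n,N}/r$, which for the uniform measure on $\mathbb S^{n-1}$ is $\Theta\!\big(\sqrt{n}\cdot t_{n,N}/r\big) = \Theta(\sqrt n / r)$. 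Concretely, I would write
\[
1-\alpha_{n,r} = \sigma_{y}\!\left(|\langle x/\|x\|,y\rangle| \leq \tfrac{t_{n,N}}{r}\right) = \frac{2|D_{n-1}|}{|D_n|}\int_0^{t_{n,N}/r}(1-u^2)^{\frac{n-1}{2}}\,du,
\]
and bound the integrand above by $1$ (it is at most $1$), giving $1-\alpha_{n,r} \leq \frac{2|D_{n-1}|}{|D_n|}\cdot\frac{t_{n,N}}{r} \leq \frac{C\sqrt n}{r}$ by Lemma 6.1 and $t_{n,N}\leq 1$. That yields $\alpha_{n,r} \geq 1 - C\sqrt n / r$, exactly as claimed.

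The only subtlety is getting the identity $1-\alpha_{n,r} = \frac{2|D_{n-1}|}{|D_n|}\int_0^{t_{n,N}/r}(1-u^2)^{\frac{n-1}{2}}\,du$ straight: $\alpha_{n,r}$ was \emph{defined} as $\sigma_y(\langle x,y\rangle \geq t)$, i.e. as the measure of one spherical cap $\{\langle x/\|x\|, y\rangle \geq t_{n,N}/r\}$ together with the cone correction, and the decomposition in Eq.~(3.1) shows $\alpha_{n,r}$ equals (cap volume $+$ cone volume)$/|D_n|$ where cap and cone sit on the common base hyperplane $\{x_1 = t_{n,N}/r\}$; the complementary region $\{\langle x/\|x\|,y\rangle < t_{n,N}/r\}\cap D_n$ has volume $|D_n| - (\text{cap}) - (\text{cone})$, hence $1-\alpha_{n,r}$ is that complementary slab's normalized volume, which by the standard slicing of the ball is $\frac{2|D_{n-1}|}{|D_n|}\int_0^{t_{n,N}/r}(1-u^2)^{\frac{n-1}{2}}\,du$. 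I expect the main (very minor) obstacle to be merely presenting this bookkeeping cleanly; the analytic content is a one-line estimate $\int_0^{s}(1-u^2)^{(n-1)/2}\,du \leq s$ together with Lemma 6.1. One should also note $t_{n,N}/r < 1$ holds since $t_{n,N}\leq 1 \leq n^2 \leq r$, so the band does not wrap around and the formula is valid.
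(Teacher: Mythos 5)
Your proposal is correct and is essentially the paper's own argument: discard the nonnegative cone contribution (equivalently, bound $1-\alpha_{n,r}$ by the normalized volume of the central slab) and then estimate $\frac{2|D_{n-1}|}{|D_n|}\int_0^{t_{n,N}/r}\left(1-u^2\right)^{\frac{n-1}{2}}du \le \frac{2|D_{n-1}|}{|D_n|}\cdot\frac{1}{r} \le \frac{C\sqrt{n}}{r}$ via the ratio $|D_{n-1}|/|D_n|\le C\sqrt n$; the paper reaches the same conclusion slightly more laboriously, replacing $(1-x^2)^{\frac{n-1}{2}}$ by $(1-x)^{\frac{n-1}{2}}$ and expanding $(1-1/r)^{\frac{n+1}{2}}$, whereas you simply bound the integrand by $1$. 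One small correction: your displayed ``identity'' for $1-\alpha_{n,r}$ is really only an upper bound --- the exact value is the slab volume minus the two cone volumes (and the slab is two-sided, so both antipodal caps enter with the factor $2$) --- but since the terms you drop are nonnegative, the inequality runs in the direction you need and the proof stands as written once ``$=$'' is replaced by ``$\le$''.
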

\begin{proof} 
We have
\begin{align*}\alpha_{n,r} & =\frac{2|D_{n-1}|}{|D_{n}|}\left(\int_{\frac{t_{n,N}}{r}}^{1}\left(1-x^{2}\right)^{\frac{n-1}{2}}dx+\frac{t_{n,N}}{nr}\left(1-\frac{t^{2}}{r^{2}}\right)^{\frac{n-1}{2}}\right)\\
& \geq\frac{2|D_{n-1}|}{|D_{n}|}\int_{\frac{t_{n,N}}{r}}^{1}\left(1-x^{2}\right)^{\frac{n-1}{2}}dx \geq\frac{2|D_{n-1}|}{|D_{n}|}\int_{\frac{t_{n,N}}{r}}^{1}\left(1-x^{2}\right)^{\frac{n-1}{2}}dx\\
& =1-2\frac{|D_{n-1}|}{|D_{n}|}\int_{0}^{\frac{t_{n,N}}{r}}\left(1-x^{2}\right)^{\frac{n-1}{2}}dx,
\end{align*}
where in the last equality we used the fact that  $ |D_{n-1}|\int_{0}^{1}\left(1-x^{2}\right)^{\frac{n-1}{2}}dx= \frac{|D_n|}{2}$. Continuing from the previous line, we obtain
\begin{align*}\quad\quad\quad\quad\quad\quad & \geq1-2\frac{|D_{n-1}|}{|D_{n}|}\int_{0}^{\frac{1}{r}}\left(1-x^{2}\right)^{\frac{n-1}{2}}dx\geq1-c\sqrt{n}\int_{0}^{\frac{1}{r}}\left(1-x^{2}\right)^{\frac{n-1}{2}}dx\\
& \geq1-c_{1}\sqrt{n}\int_{0}^{\frac{1}{r}}\left(1-x\right)^{\frac{n-1}{2}}dx\\
& \geq1-\frac{c\sqrt{n}}{n}\left(1-\left(1-\frac{1}{r}\right)^{\frac{n+1}{2}}\right)\\
& \geq1-\frac{c}{\sqrt{n}}\left(1-\left(1-\frac{n+1}{2r}+\ldots\right)\right)\\
& \geq1-\frac{c\sqrt{n}}{r}.
\end{align*}
\end{proof}
\subsection*{Sketch of the proof of  Remark \ref{niceremark}}
 We give short proofs of the  modifications needed so that Theorem \ref{main_thm} holds when the random polytope has at most $ 10^n \leq N \leq n^n$ facets.   
For this purpose, we modify  Lemmas \ref{firstpartlem} and \ref{lemma_2} so that they will hold when  $ 10^n \leq N \leq n^n.$ For both the aforementioned lemmas, we need to estimate the volume of a spherical cap with height $ h< 1 $. For this purpose, we shall use the following integration by parts identity:
\begin{equation}\label{IntByParts}
	\begin{aligned}
\int_{a}^{b}e^{ng(x)}dx &= \frac{1}{n}\bigg[ \frac{1}{g'(b)}e^{ng(b)} - \frac{1}{g'(a)}e^{ng(a)} \bigg]- \frac{1}{n}\int_{a}^{b}\frac{d}{dx}\left(\frac{1}{g'(x)}\right)e^{ng(x)}dx.
\end{aligned}
\end{equation}	
\begin{lemma}{\label{laplaceaprox}}
	Let $ a_n \in (\frac{2}{3},1)$ be a number that may depend on the dimension $ n $. Then the following holds:
	\be
	\begin{aligned}
	\int_{a_n}^{1}\left(1-x^{2}\right)^{\frac{n-1}{2}}dx = \frac{\left(1-a_n^{2}\right)^{\frac{n+1}{2}}}{a_n\left(n-1\right)}+O\left(\frac{\int_{a_n}^{1}\left(1-x^{2}\right)^{\frac{n-1}{2}}dx}{n}\right).
\end{aligned}
	\ee
\end{lemma}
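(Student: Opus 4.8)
The statement to prove is Lemma \ref{laplaceaprox}, which is an asymptotic expansion for the spherical cap integral $\int_{a_n}^{1}(1-x^2)^{(n-1)/2}\,dx$ for $a_n$ bounded away from $0$.

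The plan is to apply the integration-by-parts identity \eqref{IntByParts} with a suitable choice of the function $g$. First I would write $(1-x^2)^{(n-1)/2} = \exp\!\big(\tfrac{n-1}{2}\ln(1-x^2)\big)$, so that in the notation of \eqref{IntByParts} one takes $n \mapsto n-1$ and $g(x) = \tfrac12\ln(1-x^2)$, whence $g'(x) = \tfrac{-x}{1-x^2}$ and $\tfrac{1}{g'(x)} = \tfrac{x^2-1}{x} = x - \tfrac1x$. Applying the identity on $[a_n,1]$, the boundary term at $x=1$ vanishes because $g'(x)\to-\infty$ there (equivalently $(1-x^2)^{(n+1)/2}\to 0$), so the leading term is $-\tfrac{1}{n-1}\cdot\tfrac{1}{g'(a_n)}e^{(n-1)g(a_n)} = \tfrac{1}{n-1}\cdot\tfrac{1-a_n^2}{a_n}(1-a_n^2)^{(n-1)/2} = \tfrac{(1-a_n^2)^{(n+1)/2}}{a_n(n-1)}$, which is exactly the claimed main term.

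Next I would bound the remainder integral $\tfrac{1}{n-1}\int_{a_n}^1 \tfrac{d}{dx}\!\big(x-\tfrac1x\big)(1-x^2)^{(n-1)/2}\,dx = \tfrac{1}{n-1}\int_{a_n}^1 \big(1+\tfrac{1}{x^2}\big)(1-x^2)^{(n-1)/2}\,dx$. Since $a_n > 2/3$, the factor $1+x^{-2}$ is bounded above by an absolute constant (at most $1 + (3/2)^2 = 13/4$) on the whole interval $[a_n,1]$, so this remainder is $O\!\big(\tfrac1n \int_{a_n}^1 (1-x^2)^{(n-1)/2}\,dx\big)$, which is precisely the stated error term. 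That completes the proof.

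The main (and really only) obstacle is bookkeeping: making sure the boundary term at $1$ genuinely vanishes and that the substitution $n \mapsto n-1$ in \eqref{IntByParts} is applied consistently, and confirming that the constant in $1+x^{-2}$ is absolute on $[a_n,1]$ given the hypothesis $a_n \in (2/3,1)$ — the hypothesis is exactly what prevents $1/g'(x)$ and its derivative from blowing up near the lower endpoint. There is no genuine analytic difficulty; the lemma is essentially a clean one-step integration by parts, and the error term is kept in the unexpanded form $O\!\big(\tfrac1n\int_{a_n}^1(1-x^2)^{(n-1)/2}\,dx\big)$ precisely so that no further estimation of the cap integral is needed here (that is deferred to where the lemma is applied, with the relevant choice of $a_n$).
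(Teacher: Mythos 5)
Your proposal is correct and follows essentially the same route as the paper: the integration-by-parts identity \eqref{IntByParts} with $g(x)=\tfrac12\ln(1-x^2)$, vanishing boundary term at $x=1$, and the observation that $\frac{d}{dx}(1/g'(x))=1+x^{-2}$ stays bounded by an absolute constant on $[a_n,1]$ because $a_n>2/3$. The only cosmetic difference is that the paper integrates over $[a_n,1-\varepsilon]$ and sends $\varepsilon\to0$ rather than handling the improper endpoint directly, and in fact the integrand $\frac{1-x^2}{2x}$ appearing in the paper's remainder term seems to be a slip for the correct $1+x^{-2}$ that you computed — a discrepancy that does not affect the final estimate since both are uniformly $O(1)$ on $[a_n,1]$.
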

\begin{proof}
	Let $ \varepsilon < \frac{1-a_n}{2}$. Then
\begin{align*}\int_{a_{n}}^{1-\varepsilon}\left(1-x^{2}\right)^{\frac{n-1}{2}}dx & =\int_{a_{n}}^{1-\varepsilon}e^{\frac{n-1}{2}\ln(1-x^{2})}dx\\
& =\frac{2}{n-1}\bigg[-\frac{1-(1-\varepsilon)^{2}}{2(1-\varepsilon)}(1-(1-\varepsilon)^{2}){}^{\frac{n-1}{2}}\\
& \,\,\,+\frac{1-a_{n}^{2}}{2a_{n}}(1-a_{n}^{2}){}^{\frac{n-1}{2}}\bigg]-\frac{2}{n-1}\int_{a_{n}}^{1-\varepsilon}\frac{(1-x^{2})}{2x}\left(1-x^{2}\right)^{\frac{n-1}{2}}dx\\
& \leq\frac{2}{n-1}\bigg[-\frac{(1-(1-\varepsilon)^{2})^{\frac{n+1}{2}}}{2(1-\varepsilon)}+\frac{1-a_{n}^{2}}{2a_{n}}(1-a_{n}^{2}){}^{\frac{n-1}{2}}\bigg]+\\
& \quad\quad\frac{C}{n}\int_{a_{n}}^{1-\varepsilon}\left(1-x^{2}\right)^{\frac{n-1}{2}}dx,
\end{align*}
	where the second equality follows from Eq. \eqref{IntByParts}. Taking the limit of both sides of the previous inequality as $\varepsilon\to 0$ yields the lemma. 
\end{proof}
Now we show how to modify the proof of Lemma \ref{lemma_2}; Lemma \ref{firstpartlem} can be obtained by similar modifications.
For this purpose, we need to derive a lower bound for $ \alpha_{n,r}$.  First, we  show that the volume of aforementioned cone is larger than the volume of the spherical cap.
\begin{lemma}{\label{sub_main_lemma_2}}
	Assume that $ r \in [1, 1+\frac{\NN}{\sqrt{n-1}}]$ and $ 10^{n}\leq N\leq n^{n} $. When the dimension is sufficiently large,  it holds that
	\begin{equation*}
	\int_{\frac{t_{n,N}}{r}}^{1}\left(1-x^2\right)^{\frac{n-1}{2}}dx \leq \frac{1}{100}\frac {t_{n,N}}{nr} \left(1-\frac{t_{n,N}^2}{r^2}\right)^{\frac{n-1}{2}}.
	\end{equation*}
\end{lemma}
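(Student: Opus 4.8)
The plan is to follow the proof of Lemma~\ref{sub_main_lemma}, but with the crude estimate $\left(1-x^2\right)^{\frac{n-1}{2}}\le 2^{\frac{n-1}{2}}\left(1-x\right)^{\frac{n-1}{2}}$ used there replaced by the sharp cap bound of Lemma~\ref{laplaceaprox}. The crude bound no longer suffices because in the regime $10^n\le N\le n^n$ the half-width $t_{n,N}$ is separated from $1$ only by an absolute constant, not by $O(n^{-2})$ as when $N\ge n^n$, so a factor $\big(\tfrac{2}{1+t_{n,N}/r}\big)^{\frac{n-1}{2}}$ would now blow up exponentially.

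First I would record, using $t_{n,N}=\sqrt{1-\left(\frac{\gamma|\partial D_n|}{N|D_{n-1}|}\right)^{\frac{2}{n-1}}}$ together with $\tfrac{|D_n|}{|D_{n-1}|}=\Theta(n^{-1/2})$ (so $\tfrac{|\partial D_n|}{|D_{n-1}|}=\Theta(\sqrt n)$), that for $N\ge 10^n$ and $n$ large the quantity $1-t_{n,N}^2=\left(\frac{\gamma|\partial D_n|}{N|D_{n-1}|}\right)^{\frac{2}{n-1}}$ is at most a small absolute constant; for this regime I would fix the free constant $\gamma$ small enough that this stays below $\tfrac1{101}$ for all large $n$. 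Writing $u:=t_{n,N}/r$ and using $1\le r\le 1+\tfrac{\NN}{\sqrt{n-1}}$,
\[
1-u^2=1-\frac{t_{n,N}^2}{r^2}\le \Big(1+\tfrac{\NN}{\sqrt{n-1}}\Big)^2-t_{n,N}^2=(1-t_{n,N}^2)+O\!\left(\tfrac{\NN}{\sqrt n}\right),
\]
so $1-u^2$ is a small absolute constant, and in particular $u\in(\tfrac23,1)$ for $n$ large, so that Lemma~\ref{laplaceaprox} applies with $a_n=u$ and gives
\[
\int_{u}^{1}\left(1-x^2\right)^{\frac{n-1}{2}}dx=\frac{(1-u^2)^{\frac{n+1}{2}}}{u(n-1)}\left(1+O(n^{-1})\right).
\]
Dividing by the cone term $\frac{t_{n,N}}{nr}\left(1-\frac{t_{n,N}^2}{r^2}\right)^{\frac{n-1}{2}}=\frac{u}{n}(1-u^2)^{\frac{n-1}{2}}$ then yields
\[
\frac{\int_{u}^{1}\left(1-x^2\right)^{\frac{n-1}{2}}dx}{\frac{t_{n,N}}{nr}\left(1-\frac{t_{n,N}^2}{r^2}\right)^{\frac{n-1}{2}}}=\frac{n(1-u^2)}{(n-1)u^2}\left(1+O(n^{-1})\right),
\]
which is $\le\tfrac1{100}$ as soon as $1-u^2\le\tfrac1{101}-O(n^{-1})$, hence for all sufficiently large $n$ by the first step. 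This gives the lemma.

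The hard part will be the cap estimate itself, i.e. Lemma~\ref{laplaceaprox}: one must check, via the integration-by-parts identity~\eqref{IntByParts} with $g(x)=\ln(1-x^2)$, that the remainder is genuinely of lower order — the point being that $\tfrac{d}{dx}\big(\tfrac{1}{g'(x)}\big)=\tfrac12+\tfrac1{2x^2}\ge 1$ on $(\tfrac23,1)$ enters with a favourable sign and the boundary term at the upper endpoint vanishes as $\varepsilon\to0$ — and this is precisely what makes the constant $\tfrac1{100}$ (rather than merely ``some small constant'') come out. The remaining chore is the bookkeeping in the first step that pins down the admissible choice of $\gamma$ for the regime $10^n\le N\le n^n$, which one should also verify is consistent with the analogous modifications of Lemmas~\ref{firstpartlem} and~\ref{lemma_2}.
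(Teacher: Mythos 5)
Your reduction is exactly the paper's: apply Lemma~\ref{laplaceaprox} with $a_n=t_{n,N}/r$, write the cap as $\frac{(1-u^2)^{\frac{n+1}{2}}}{u(n-1)}\bigl(1+O(n^{-1})\bigr)$ with $u=t_{n,N}/r$, and compare with the cone term to get the ratio $\frac{n}{n-1}\,\frac{1-u^2}{u^2}\bigl(1+O(n^{-1})\bigr)$; the paper's own proof is precisely this one application of Lemma~\ref{laplaceaprox} (stated without the constant bookkeeping). The genuine gap is in your extra step that is supposed to pin down the constant: the claim that you can ``fix $\gamma$ small enough that $1-t_{n,N}^2\le \tfrac1{101}$ for all large $n$.'' This fails. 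Since $1-t_{n,N}^2=\bigl(\tfrac{\gamma|\partial D_n|}{N|D_{n-1}|}\bigr)^{\frac{2}{n-1}}$ and both $\gamma^{\frac{2}{n-1}}$ and $\bigl(\tfrac{|\partial D_n|}{|D_{n-1}|}\bigr)^{\frac{2}{n-1}}=(2\pi n)^{\frac{1}{n-1}}(1+o(1))$ tend to $1$, the value of the fixed constant $\gamma$ has asymptotically no influence: at the boundary case $N=10^n$ one gets $1-t_{n,N}^2=10^{-2}\bigl(1+\tfrac{\ln(2\pi n\gamma^2/100)}{n-1}+\dots\bigr)$, which for every fixed $\gamma>0$ exceeds $10^{-2}$ once $n$ is large. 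Consequently your criterion $1-u^2\le\tfrac1{101}-O(n^{-1})$ is never met near $N=10^n$; the ratio instead tends to $\tfrac{1}{99}$, so the constant exactly $\tfrac1{100}$ cannot be extracted this way (and, incidentally, $\gamma$ is not really a free parameter here — it is taken to be $\ln 2$ in the modified lemmas, and changing it would alter the constants of Theorem~\ref{main_thm}).

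The honest repair is not to tune $\gamma$ but to weaken the numerical constant: the same computation gives cap $\le C_0\cdot$cone with some small absolute constant $C_0$ (say $\tfrac1{50}$) for all $10^n\le N\le n^n$ and $n$ large, since $1-u^2\le 1-t_{n,N}^2+O\bigl(n^{-1/2}N^{-\frac{2}{n-1}}\bigr)\le \tfrac1{100}(1+o(1))$; and a small absolute constant is all that the downstream Lemma~\ref{alphnanrmodified} and the modified Lemma~\ref{lemma_2} actually use (only the factor $1-\tfrac1{25}$-type loss matters, and Remark~\ref{niceremark} explicitly allows the final constant to change in this regime). Apart from this constant-chasing step, your argument, including the sanity check on the sign of $\frac{d}{dx}\bigl(\frac{1}{g'(x)}\bigr)=\frac12+\frac1{2x^2}$ in Lemma~\ref{laplaceaprox}, matches the paper's proof.
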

\begin{proof}
	Applying Lemma \ref{laplaceaprox} with $ a_n = \frac {t_{n,N}}r$ yields
\begin{align*}\int_{\frac{t_{n,N}}{r}}^{1}\left(1-x^{2}\right)^{\frac{n-1}{2}}dx & =\frac{1}{n-1}\frac{r}{t_{n,N}}\left(1-\frac{t_{n,N}^{2}}{r^{2}}\right)^{\frac{n+1}{2}}+O\left(n^{-1}\int_{\frac{t_{n,N}}{r}}^{1}\left(1-x^{2}\right)^{\frac{n-1}{2}}dx\right)\\
& \leq\frac{1}{100}\frac{t_{n,N}}{nr}\left(1-\frac{t_{n,N}^{2}}{r^{2}}\right)^{\frac{n-1}{2}}.
\end{align*}
\end{proof}
Now we modify Lemma \ref{lemma_2}.
Using  Lemma \ref{sub_main_lemma_2}, one can repeat the proof of Lemma \ref{main_lemma} to derive the following
\begin{lemma}[Modification of Lemma \ref{main_lemma}]{\label{alphnanrmodified}}
	Assume that $ r \in [1, 1+\frac{\NN}{\sqrt{n-1}}]$ and $ 10^{n}\leq N\leq n^{n} $. Then 
	\begin{equation*}
	\frac{2\gamma\left(1-\frac{1}{25}+ O\left(n^{-1}\right)\right)}{N}e^{\left(n-1\right)\left(r-1\right)N^{\frac{2}{n-1}}\left(1+O\left(n^{-0.5}\right)\right)}\leq\alpha_{n,r}.
	\end{equation*}
\end{lemma}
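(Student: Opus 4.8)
The plan is to run the proof of Lemma~\ref{main_lemma} essentially verbatim, invoking Lemma~\ref{sub_main_lemma_2} in place of Lemma~\ref{sub_main_lemma}. Since only a \emph{lower} bound on $\alpha_{n,r}$ is needed, and the spherical-cap term in the formula for $\alpha_{n,r}$ recalled at the start of Section~\ref{Techandloose} is nonnegative, I would simply discard it and keep the cone term:
\[
\alpha_{n,r}\ \ge\ \frac{2|D_{n-1}|}{|D_n|}\cdot\frac{t_{n,N}}{nr}\left(1-\frac{t_{n,N}^2}{r^2}\right)^{\frac{n-1}{2}}\ =\ \frac{2|D_{n-1}|}{|\partial D_n|}\cdot\frac{t_{n,N}}{r}\left(1-\frac{t_{n,N}^2}{r^2}\right)^{\frac{n-1}{2}}.
\]
Lemma~\ref{sub_main_lemma_2} is exactly what guarantees that, for $r$ in the stated window, dropping the cap (and the expansions below) costs only a bounded multiplicative factor rather than one that grows with $n$.

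Next I would expand the cone term as in Lemma~\ref{main_lemma}. Put $\beta:=\left(\frac{\gamma|\partial D_n|}{|D_{n-1}|N}\right)^{2/(n-1)}$, so that $\beta^{(n-1)/2}=\frac{\gamma|\partial D_n|}{|D_{n-1}|N}$; since $|\partial D_n|/|D_{n-1}|$ is of order $\sqrt n$, one has $\beta=\NN\left(1+O(\tfrac{\ln n}{n})\right)$. With $r=1+(r-1)$, $r^2-t_{n,N}^2=\beta+2(r-1)+(r-1)^2$, and the hypothesis $r-1\in[0,\tfrac{\NN}{\sqrt{n-1}}]$ gives $r-1=O(n^{-1/2})$ and $\tfrac{r-1}{\beta}=O(n^{-1/2})$; hence, by $(1+x)^{(n-1)/2}=e^{\frac{n-1}{2}x(1+O(x))}$ and $1/\beta=\NNN\left(1+O(n^{-1/2})\right)$,
\[
\left(r^2-t_{n,N}^2\right)^{\frac{n-1}{2}}=\beta^{\frac{n-1}{2}}\left(1+\frac{2(r-1)+(r-1)^2}{\beta}\right)^{\frac{n-1}{2}}=\frac{\gamma|\partial D_n|}{|D_{n-1}|N}\,e^{(n-1)(r-1)\NNN(1+O(n^{-1/2}))}.
\]
Feeding this back and writing $\frac{t_{n,N}}{r}\cdot\frac{1}{r^{n-1}}=\frac{t_{n,N}}{r^n}$ yields $\alpha_{n,r}\ge\frac{2\gamma}{N}\cdot\frac{t_{n,N}}{r^n}\,e^{(n-1)(r-1)\NNN(1+O(n^{-1/2}))}$.

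The constant $1-\tfrac1{25}$ --- and the only real departure from Lemma~\ref{main_lemma} --- comes from the residual prefactor $\frac{t_{n,N}}{r^n}$. When $N\ge n^n$ one has $r-1=O(n^{-2})$, so $t_{n,N}/r=1-O(n^{-2})$ and $r^{n-1}=1+O(n^{-1})$, and both vanish into the $1+O(n^{-1})$ error. Here $N\ge 10^{n}$ only gives $\NNN\ge 100$, hence $\beta\le\NN(1+o(1))\le\tfrac1{100}(1+o(1))$, so $t_{n,N}=\sqrt{1-\beta}\ge 1-\beta$ while $r\le 1+\tfrac1{100\sqrt{n-1}}$; this contributes a factor $t_{n,N}/r\ge 1-\tfrac1{25}+O(n^{-1})$ once the dimension is large. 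The remaining $r^{-(n-1)}=e^{-(n-1)\ln(1+(r-1))}$ is no longer $1+o(1)$, but $(n-1)\ln(1+(r-1))=(n-1)(r-1)(1+O(n^{-1/2}))$, so one carries it into the exponent, where it changes the coefficient of $(n-1)(r-1)\NNN$ by a relative amount $O(\NN)\le O(\tfrac1{100})$; for the sketch this bounded perturbation is folded into the displayed $1+O(n^{-1/2})$ factor (a fully careful write-up would keep it as a separate absolute constant). This is exactly the mechanism behind the changed constant in front of $\NN$ noted in Remark~\ref{niceremark}: the bounded losses here propagate through the subsequent modification of Lemma~\ref{lemma_2}.

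The step I expect to be the real obstacle is precisely this bookkeeping of the $O(1)$-size factors. Every auxiliary quantity that was $1+o(1)$ under $N\ge n^n$ --- in particular the ratio $t_{n,N}/r$ and the powers $(1+(r-1))^{\pm(n-1)}$ --- is now only pinned between absolute constants, and since several of them sit in the base of an $(n-1)$-st power one must decide consistently which to absorb into the exponent and which to keep as a multiplicative constant, while checking that nothing larger than the stated errors is discarded and that $\tfrac1{25}$ is chosen liberally enough to cover all of them. With those choices fixed, the inequality of the lemma follows, and the modifications of Lemmas~\ref{firstpartlem} and~\ref{lemma_2} then go through with this weaker input exactly as in the $N\ge n^n$ case.
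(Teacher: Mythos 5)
Your proposal follows the route the paper intends (it only says ``one can repeat the proof of Lemma~\ref{main_lemma}''): drop the nonnegative cap term to get a lower bound, expand the cone term exactly as in Lemma~\ref{main_lemma}, and account for the fact that the prefactor $t_{n,N}/r^{n}$ is no longer $1+O(n^{-1})$ once $N$ is only $\ge 10^n$. Your identification of $r^{-(n-1)}$ as the genuinely new obstruction is exactly right, and so is the observation that it perturbs the coefficient of $(n-1)(r-1)\NNN$ in the exponent by a relative amount of order $\NN$.

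One point you flag yourself but should not actually be folded away: that perturbation is $O(\NN)\le 1/100$, an \emph{absolute} constant for $N$ close to $10^n$, not $O(n^{-1/2})$. So after carrying $r^{-(n-1)}=e^{-(n-1)\ln r}$ into the exponent, what the computation actually yields is
\[
\alpha_{n,r}\ \ge\ \frac{2\gamma\left(1-\tfrac1{25}+O(n^{-1})\right)}{N}\,e^{(n-1)(r-1)\NNN\left(1-\NN+O(n^{-1/2})\right)},
\]
and the factor $1-\NN$ cannot be written as $1+O(n^{-0.5})$ in the range $10^n\le N\lesssim n^{(n-1)/4}$. At the right endpoint $r=1+\NN/\sqrt{n-1}$ the discrepancy between this and the stated form is a multiplicative $e^{-\NN\sqrt{n-1}(1+o(1))}$, which is unbounded. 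In other words, the lemma as printed is loose on exactly this point; the honest statement carries the separate factor $1-\NN$ in the exponent (or equivalently replaces $\NNN$ by $\NNN-1$), and the resulting bounded losses are what the $1/(1-\tfrac1{20})$ in the paper's modified Lemma~\ref{lemma_2} is meant to absorb. Your sketch correctly names this as the place where a careful write-up must keep an extra absolute constant; I would just stress that it is not optional bookkeeping but a necessary correction to the exponent, without which the displayed inequality is false for $N$ near the lower end of the range. With that amendment, the rest of your expansion (the treatment of $\beta$, the $\ln n/n$ error from $|\partial D_n|/|D_{n-1}|$, and the constant $1-\tfrac1{25}$ as a liberal lower bound on $t_{n,N}/r$) is correct and matches the paper's sketch.
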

Finally we show how to modify Lemma \ref{lemma_2}.
\begin{lemma}[Modification of Lemma \ref{lemma_2}]
	\begin{equation*}
	\E[|P \setminus D_n|]  \leq \left(\frac{\int_{0}^{\infty}e^{-\ln(2)e^{t}}dt}{1-\frac{1}{20}}+O\left(n^{-0.5}\right)\right)N^{-\frac{2}{n-1}}|D_n|.
	\end{equation*}
\end{lemma}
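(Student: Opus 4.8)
The plan is to mirror the proof of Lemma \ref{lemma_2} verbatim, replacing the constant $\gamma$ that appears in the exponent-base of $(1-\alpha_{n,r})^{N/2}$ by the slightly smaller effective constant coming from Lemma \ref{alphnanrmodified}. Recall that in the original argument the dominant contribution to $\E[|P\setminus D_n|]$ came entirely from the range $r\in[1,1+\delta]$, where $\alpha_{n,r}=\tfrac{2\gamma(1+O(n^{-1}))}{N}e^{(n-1)(r-1)N^{2/(n-1)}(1+O(n^{-0.5}))}$; all other ranges contributed $|D_n|\NN o(n^{-0.5})$. Since we only need an \emph{upper} bound on $\E[|P\setminus D_n|]$, and $1-\alpha_{n,r}$ is decreasing in $\alpha_{n,r}$, I will use the \textbf{lower} bound on $\alpha_{n,r}$ supplied by Lemma \ref{alphnanrmodified}, namely $\alpha_{n,r}\geq \tfrac{2\gamma(1-\frac1{25}+O(n^{-1}))}{N}e^{(n-1)(r-1)N^{2/(n-1)}(1+O(n^{-0.5}))}$, to bound $(1-\alpha_{n,r})^{N/2}$ from above.

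First I would treat the main range $r\in[1,1+\delta]$: plugging the lower bound for $\alpha_{n,r}$ into $(1-\alpha_{n,r})^{N/2}\le \exp(-\tfrac N2\alpha_{n,r})$ and using $1-x=(1+O(x^2))e^{-x}$ exactly as in the original Lemma, the integral becomes
\begin{equation*}
(1+O(n^{-0.5}))|D_n|\NN\int_0^{n^{0.5}} e^{-\gamma(1-\frac1{25})e^{t}}\,dt
= (1+O(n^{-0.5}))|D_n|\NN\int_0^{\infty} e^{-\gamma(1-\frac1{25})e^{t}}\,dt,
\end{equation*}
where the tail beyond $n^{0.5}$ is negligible because the integrand is doubly-exponentially small. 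Then I would substitute $\gamma=\ln 2$ (the value fixed in the proof of Theorem \ref{main_thm}) and bound the resulting integral: writing $a=\tfrac1{25}\in(0,1)$, one has $\int_0^\infty e^{-\ln(2)(1-a)e^t}\,dt \le \tfrac{1}{1-a}\int_0^\infty e^{-\ln(2)e^t}\,dt$ after the substitution $u=(1-a)e^t$ (since $du=(1-a)e^t\,dt=u\,dt$, so $dt=du/u$, and the lower limit moves from $1$ up to $1-a<1$, only increasing the integral), which is at most $\tfrac{1}{1-\frac1{20}}\int_0^\infty e^{-\ln(2)e^t}\,dt$ with room to spare since $\tfrac{1}{1-1/25}<\tfrac1{1-1/20}$.

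Second, for every other range — $r\in[1+\delta,1+2\NN]$, $r\in[1+2\NN,1+\tfrac2n]$, $r\in[1+\tfrac2n,n^2]$, and $r\in[n^2,\infty)$ — the same estimates used in the proof of Lemma \ref{lemma_2} (together with Lemma \ref{tech_lemma} for the last range) go through unchanged, since replacing $\gamma$ by $\gamma(1-\frac1{25})$ only changes absolute constants inside doubly-exponential or polynomial-in-$N$ decay factors; each such range still contributes $|D_n|\NN o(n^{-0.5})$. Summing the five contributions gives $\E[|P\setminus D_n|]\le\big(\tfrac{\int_0^\infty e^{-\ln(2)e^t}\,dt}{1-\frac1{20}}+O(n^{-0.5})\big)\NN|D_n|$, as claimed. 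The only genuinely new ingredient is the manipulation of the integral $\int_0^\infty e^{-\gamma(1-a)e^t}\,dt$ versus $\int_0^\infty e^{-\gamma e^t}\,dt$; I expect the main (minor) obstacle to be bookkeeping the $O(n^{-1})$ terms inside the exponent carefully enough that, after the $N/2$-th power is taken, the effective constant is provably at least $\gamma(1-\frac1{20})$ rather than merely $\gamma(1-\frac1{25})$ — i.e. making sure the slack between $\frac1{25}$ and $\frac1{20}$ genuinely absorbs all lower-order corrections, which it does for $n$ large.
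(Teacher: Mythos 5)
Your overall strategy (use the lower bound of Lemma \ref{alphnanrmodified} on the main range near $r=1$ and argue that everything else is negligible) is the same as the paper's, but two of your steps do not survive in the regime $10^n\leq N\leq n^n$, and this is exactly where the paper deviates from the proof of Lemma \ref{lemma_2}. First, the claim that the four outer ranges ``go through unchanged'' fails. When $N$ is as small as $10^n$, the quantity $N^{-\frac{2}{n-1}}$ is of constant order ($\approx 10^{-2}$), so $t_{n,N}$ is no longer $1-O(n^{-2})$, the breakpoints of the original five-part split get out of order (for large $n$ one has $2N^{-\frac{2}{n-1}}>\frac{2}{n}$, so the intervals $[1+2N^{-\frac{2}{n-1}},1+\frac 2n]$ and $[1+\frac 2n,n^2]$ are vacuous or reversed), and the estimate on $[1+\frac 2n,n^2]$ explicitly used $N\geq n^n$: the bound $\alpha_{n,1+\frac 2n}\geq c(4/n)^{n/2}$ only gives $(1-\alpha_{n,1+\frac 2n})^{N/2}\leq e^{-Nn^{-n/2}}\leq e^{-\sqrt N}$ because $n^{n/2}\leq\sqrt N$, which is false for $N=10^n$; there $Nn^{-n/2}\to 0$ and the prefactor $n^{2n}$ is not absorbed. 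This is why the paper replaces the five-part split by a three-part split with $\delta=\min\{N^{-\frac{2}{n-1}}(n-1)^{-0.5},(100n)^{-1}\}$ and kills the entire middle range $[1+\delta,n^2]$ at once, using the doubly exponential factor $e^{-ce^{\sqrt{n-1}(1+O(n^{-0.5}))}}$ coming from Lemma \ref{alphnanrmodified} at $r=1+\delta$, which swamps $\int^{n^2}r^{n-1}dr$. (The cap $\delta\leq(100n)^{-1}$ also matters for your main range: with $N\approx 10^n$ the factor $r^{n-1}$ is as large as $e^{cN^{-\frac{2}{n-1}}\sqrt{n-1}}$ on $[1,1+N^{-\frac{2}{n-1}}(n-1)^{-0.5}]$, so it is not $1+O(n^{-0.5})$ as your ``verbatim'' computation asserts.)

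Second, your auxiliary inequality for the main-range constant is in the wrong direction. The substitution $u=(1-a)e^t$ gives
\begin{equation*}
\int_{0}^{\infty}e^{-\ln(2)(1-a)e^{t}}dt=\int_{1-a}^{\infty}e^{-\ln(2)u}\frac{du}{u},
\end{equation*}
and enlarging the domain from $[1,\infty)$ to $[1-a,\infty)$ only shows this is \emph{larger} than $\int_{0}^{\infty}e^{-\ln(2)e^{t}}dt$; no factor $\frac{1}{1-a}$ appears. In fact, writing $E_1(c)=\int_{c}^{\infty}s^{-1}e^{-s}ds$, one checks that $\frac{d}{d\lambda}\left(\lambda E_1(\lambda c)\right)=E_1(\lambda c)-e^{-\lambda c}<0$ at $\lambda=1$ for $c=\ln 2$ (since $E_1(\ln 2)\approx 0.379<\tfrac12$), so $E_1((1-a)\ln 2)>\frac{1}{1-a}E_1(\ln 2)$ for small $a>0$; numerically $E_1(\tfrac{24}{25}\ln 2)\approx 0.399$ while $\frac{1}{1-1/25}E_1(\ln 2)\approx 0.394$. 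The correct elementary control is the shift $t\mapsto t+\ln(1-a)$, which gives $\int_{0}^{\infty}e^{-\ln(2)(1-a)e^{t}}dt\leq\int_{0}^{\infty}e^{-\ln(2)e^{t}}dt+|\ln(1-a)|\,e^{-\ln(2)(1-a)}$, an additive rather than multiplicative loss; you must then check (rather than assume) that the slack between $\frac1{25}$ and $\frac1{20}$ absorbs it, and with the stated constants this is tight enough that your argument as written does not close. So the two genuinely new ingredients of the small-$N$ case --- the re-partition of the integral and the quantitative passage from the $(1-\tfrac1{25})$-degraded exponent to the final constant --- are precisely the points your proposal leaves unproved or gets backwards.
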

\begin{proof}
	We define $ \delta = \min\{\NN(n-1)^{-0.5},(100n)^{-1}\}$ and split $\E[|P \setminus D_n|]  $ into three parts:
	\begin{equation*}
\begin{aligned}
\E[|P\setminus D_n|]=|\partial D_{n}| & \bigg(\int_{1}^{1+\delta}r^{n-1}\left(1-\alpha_{n,r}\right)^{\frac{N}{2}}dr+\int_{1+\delta}^{n^{2}}r^{n-1}\left(1-\alpha_{n,r}\right)^{\frac{N}{2}}dr\\
& \,\,+\int_{n^{2}}^{\infty}r^{n-1}\left(1-\alpha_{n,r}\right)^{\frac{N}{2}}dr\bigg).
\end{aligned}
	\end{equation*}
	We handle the third integral in the same way as in the Lemma \ref{lemma_2}. Moreover, the second integral is negligible:
	\begin{equation*}
\begin{aligned} & |\partial D_{n}|\int_{1+\delta}^{n^{2}}r^{n-1}\left(1-\alpha_{n,r}\right)^{\frac{N}{2}}dr\leq\\
& |\partial D_{n}|\int_{1+\delta}^{n^{2}}r^{n-1}\left(1-\alpha_{n,1+\left(n-1\right)^{-0.5}N^{-\frac{2}{n-1}}}\right)^{\frac{N}{2}}dr=\\
& |\partial D_{n}|\int_{1+\delta}^{n^{2}}r^{n-1}\left(1-\frac{2\ln(2)\left(1-\frac{1}{25}+O\left(n^{-1}\right)\right)}{N}e^{\sqrt{n-1}\left(1+O\left(n^{-0.5}\right)\right)}\right)^{\frac{N}{2}}dr\leq\\
& |\partial D_{n}|\int_{1+\delta}^{n^{2}}r^{n-1}e^{-\ln(2)\left(1-\frac{1}{25}+O\left(n^{-1}\right)\right)e^{\sqrt{n-1}\left(1+O\left(n^{-0.5}\right)\right)}}dr\leq\\
& |\partial D_{n}|e^{-\ln(2)\left(1-\frac{1}{25}+O\left(n^{-1}\right)\right)e^{\sqrt{n-1}\left(1+O\left(n^{-0.5}\right)\right)}}\int_{t_{n,N}}^{n^{2}}r^{n-1}dr\leq\\
& C|D_{n}|n^{n^{2}}e^{-\ln(2)\left(1-\frac{1}{25}+O\left(n^{-1}\right)\right)e^{\sqrt{n-1}\left(1+O\left(n^{-0.5}\right)\right)}}=o(n^{-3})|D_{n}|\NN.
\end{aligned}
	\end{equation*}
	Finally,  using the lower bound for $ \alpha_{n,r}$ that was proven in Lemma \ref{alphnanrmodified}, we can handle the first integral as we did in Lemma \ref{lemma_2} to derive that
	\begin{align*}
	\E[|P \setminus D_n|]  \leq \left(\frac{\int_{0}^{\infty}e^{-\ln(2)e^{t}}dt}{1-\frac{1}{20}}+O\left(n^{-0.5}\right)\right)N^{-\frac{2}{n-1}}|D_n|.
	\end{align*}
\end{proof}
 	\bibliographystyle{plainnat}	 	
\bibliography{Final}
\end{document}